\newtheorem{algorithm}{Algorithm}[section]
\newtheorem{dfn}{Definition} [section]
\newtheorem{theorem}[dfn]{Theorem}
\newtheorem{lemma}[dfn]{Lemma}
\newtheorem{problem}[dfn]{Problem}
\newtheorem{proposition}[dfn]{Proposition}
\newtheorem{corollary}[dfn]{Corollary}
\newtheorem{conjecture}[dfn]{Conjecture}
\newcommand{\CA}{{\mathcal{A}}}
\newcommand{\CF}{{\mathcal{F}}}
\newcommand{\CH}{{\mathcal{H}}}
\newcommand{\CM}{{\mathcal{M}}}
\newcommand{\CN}{{\mathcal{N}}}
\newcommand{\CP}{{\mathcal{P}}}
\newcommand{\CQ}{{\mathcal{Q}}}
\newcommand{\CC}{{\mathcal{C}}}
\newcommand{\CV}{{\mathcal{V}}}
\newcommand{\CE}{{\mathcal{E}}}
\newcommand{\CG}{{\mathcal{G}}}
\newcommand{\CB}{{\mathcal{B}}}
\def\ce#1{\lceil#1\rceil}
\begin{document}

\title{Cycles of given lengths in hypergraphs}

\date{September 26, 2016}

\author{Tao Jiang\footnote{Department of Mathematics, Miami University, Oxford, OH 45056, USA. E-mail: jiangt@miamioh.edu. Research partially supported by National Science Foundation grant DMS-1400249. Research partially carried out during the author's visit of University of Science and Technology of China, whose hospitality is gratefully acknowledged.}
~~~~~~~~
Jie Ma\footnote{School of Mathematical Science,
University of Science and Technology of China, Hefei, 230026,
P.R. China. Email: jiema@ustc.edu.cn. Research partially supported by NSFC projects 11501539 and 11622110.}}

\maketitle

\begin{abstract}
In this paper, we develop a method for studying cycle lengths in hypergraphs. Our method is built on
earlier ones used in \cite{GL-3uniform,GL,FO}.   However, instead of utilizing the well-known lemma
of Bondy and Simonovits \cite{BS74} that most existing methods do, we develop a new and very simple lemma in its place.
One useful feature of the new lemma is its adaptiveness for the hypergraph setting.

Using this new method, we prove a conjecture of Verstra\"ete \cite{V16} that for $r\ge 3$, every $r$-uniform hypergraph with average degree
$\Omega(k^{r-1})$ contains Berge cycles of $k$ consecutive lengths. This is sharp up to the constant factor.
As a key step and a result of independent interest, we prove that every $r$-uniform linear hypergraph with average degree at least $7r(k+1)$ contains
Berge cycles of $k$ consecutive lengths. 

In both of these results, we have additional control on the lengths of the cycles,
which therefore also gives us bounds on the Tur\'an numbers of Berge cycles
(for even and odd cycles simultaneously).
In relation to our main results, we obtain further improvements on the Tur\'an numbers of Berge cycles and
the Zarankiewicz numbers of even cycles.
We will also discuss some potential further applications of our method.
\end{abstract}



\section{Introduction}
The study of cycles is one of the essential ingredients of Graph Theory.
In the present paper, we are mainly concerned with extremal problems on cycle lengths in graphs and hypergraphs.
One such type of problems consider the set of cycle lengths in graphs under certain density conditions
(see, for instance, the work of Sudakov and Verstraete \cite{SV08} for some in-depth discussions).
Another type of problems, which are most pertinent to this paper, consider the longest possible consecutive sequence of cycle lengths.

This can be traced back to a question of Erd\H{o}s and now a theorem of Bondy and Vince \cite{BV98} that
any graph with minimum degree at least three contains two cycles whose lengths differ by at most two.
H\"aggkvist and Scott \cite{HS98} extended this by showing that every graph with minimum degree
$\Omega(k^2)$ contains $k$ cycles of consecutive even lengths.
This quadratic bound was first improved to a linear one by Verstra\"ete in \cite{V00}, who proved that
average degree at least $8k$ will ensure the existence of $k$ cycles of consecutive even lengths.
Since then there has been an extensive research \cite{Fan,SV08,KSV,Ma} on related topics.
Very recently Liu and the second author \cite{LM} proved a tight result that every graph $G$ with minimum degree at least $2k+1$
contains $k$ cycles of consecutive even lengths;
and if $G$ is 2-connected and non-bipartite, then $G$ also contains $k$ cycles of consecutive odd lengths.
Among others, one closely related problem is the study of cycle lengths modulo a fixed integer $k$.
This was proposed by Burr and Erd\H{o}s \cite{Erd76} forty years ago and
some conjectures were formalized by Thomassen \cite{Th83} in 1983
(we refer interested readers to \cite{SV16} for a thorough introduction).

Our work is also closely related to the so-called {\it Tur\'an problem}.
Let $\mathcal{F}$ be a family of  $r$-graphs. An $r$-graph is {\it $\mathcal{F}$-free}
if it does not contain any member of $\mathcal{F}$ as a subhypergraph.
The {\it Tur\'an number} $ex_r(n,\mathcal{F})$ of the family $\mathcal{F}$ denotes the
maximum number of hyperedges contained in an $n$-vertex $\mathcal{F}$-free $r$-graph.
When $r=2$, we will write it as $ex(n,\mathcal{F})$. Studying the Tur\'an function $ex(n,\CF)$ 
for graphs and hypergraphs has been a central problem in extremal graph theory ever since the work of
P. Tur\'an \cite{turan}. For  non-bipartite graphs, the problem is asymptotically solved by the celebrated Erd\H{o}s-Stone-Simonovits Theorem 
\cite{Es-Stone} (see also \cite{ES}).
However, the Tur\'an problem for bipartite graphs remains mostly open, with the special case for  even cycles $C_{2k}$
receiving particular attention.
A classic theorem of Bondy and Simonovits \cite{BS74} shows that $ex(n,C_{2k})\le 100k\cdot n^{1+1/k}$.
This bound was subsequently improved by several authors in \cite{V00,Pik,BJ}. The Tur\'an problem for cycles in hypergraphs
has been investigated for different notions of hypergraph cycles in \cite{BG, FJ, FO, G06, GL-3uniform,KMV} among others.
Our work in this paper focuses on so-called {\it Berge cycles}. The method we develop here also
works for some other common notions of cycles, such as so-called {\it linear cycles} (or  sometimes known as {\it loose cycles}).
See Section 6 for discussion in that direction.

A hypergraph $\CH=(\CV,\CE)$ consists of a set $\CV$ of {\it vertices} and a collection $\CE$ of subsets of $\CV$.
We call a member of $\CE$ a {\it hyperedge} or simple an {\it edge} of $\CH$. A hypergraph $\CH$ is
{\it $r$-uniform} if all of its edges are $r$-subsets of $\CV(H)$. We also simply call an $r$-uniform hypergraph
an {\it $r$-graph} for brevity.
A {\it Berge path of length $\ell$},  is a hypergraph $\CP$ consisting of $\ell$ distinct edges $e_1,\dots, e_\ell$
such that there exist $\ell+1$ distinct vertices $v_1,\dots, v_{\ell+1}$ satisfying that
$v_i, v_{i+1}\in e_i$ for $i=1,\ldots, \ell$. We call the $2$-uniform path $v_1v_2\dots v_{\ell+1}$ a {\it spine} of the Berge path $\CP$.
A {\it Berge cycle of length $\ell$} is a hypergraph $\CC$ consisting of
$\ell$ distinct hyperedges $e_1,...,e_\ell$ such that there exist $\ell$ distinct vertices $v_1,\dots, v_\ell$ satisfying that
$v_i,v_{i+1}\in e_i$ for each $i=1,...,\ell-1$ and $v_1,v_\ell\in e_\ell$. We call the $2$-uniform cycle $v_1v_2\dots v_\ell v_1$
a {\it spine} of the Berge cycle $\CC$.

Let us now recall Verstra\"ete's theorem on cycles of consecutive even lengths in graphs.

\begin{theorem} {\rm (Verstra\"ete \cite{V00})} \label{Jacques-even-cycles}
Let $k\geq 2$ be a natural number and $G$ a bipartite graph of average degree at least $4k$ and radius $h$.
Then $G$ contains cycles of $k$ consecutive even length. Moreover, the shortest of these
cycles has length at most $2h$.
\end{theorem}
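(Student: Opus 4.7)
The plan is to combine a standard minimum-degree reduction with a BFS decomposition of $G$, and then apply the Bondy--Simonovits cycle-length lemma to extract cycles of $k$ consecutive even lengths. First, since $G$ has average degree at least $4k$, the classical iterative removal of vertices of degree below $2k$ produces a non-empty subgraph $H\subseteq G$ with minimum degree at least $2k$; as a subgraph of $G$, the graph $H$ remains bipartite.

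Next, pick a vertex $v\in V(G)$ whose eccentricity in $G$ is at most $h$ and run a breadth-first search in $G$ from $v$, yielding a BFS tree $T$ together with a level partition $L_0=\{v\},L_1,\dots,L_h$. Because $G$ is bipartite, every edge of $G$ joins vertices in consecutive levels, so in particular every edge of $H$ does as well. Let $j$ be the largest index with $V(H)\cap L_j\neq\emptyset$ and pick $u\in V(H)\cap L_j$. Since the edges incident to $u$ in $H$ cannot cross to $L_{j+1}$ (empty inside $H$) nor stay within $L_j$ (forbidden by bipartiteness), and since $u$ has at least $2k$ neighbours in $H$, we secure $2k$ neighbours $w_1,\dots,w_{2k}$ of $u$ all lying in $L_{j-1}\cap V(H)$.

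Finally, feed the star $\{uw_1,\dots,uw_{2k}\}$ together with the tree $T$ into the Bondy--Simonovits lemma. Concretely, consider the minimal subtree $T^\ast$ of $T$ spanning $\{w_1,\dots,w_{2k}\}$, order the $w_i$ according to a depth-first traversal of $T^\ast$, and examine the resulting sequence of tree-distances between consecutive pairs; closing each such $T$-path up through $u$ yields a cycle, and the Bondy--Simonovits lemma guarantees $k$ consecutive even lengths among the family of cycles so obtained. Since every such cycle is contained in $T\cup\{uw_i:1\le i\le 2k\}$ and $T$ has depth at most $h$, each of these cycles has length at most $2h$; in particular the shortest among the $k$ consecutive even cycles is at most $2h$, as required. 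The principal difficulty lies in this last step: producing genuinely consecutive even lengths, rather than merely $k$ distinct even lengths, is exactly what forces the factor $2k$ and not $k$ in the minimum-degree threshold, since it is the extra slack that allows one to interpolate over a full interval.
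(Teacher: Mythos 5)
The reduction to a subgraph $H$ of minimum degree $\ge 2k$, the BFS decomposition from a vertex of small eccentricity, and the observation that some $u\in V(H)\cap L_j$ (with $j$ the last nonempty level meeting $H$) has all of its $\ge 2k$ $H$-neighbours in $L_{j-1}$ are all sound. The gap is in the final step: taking the star $\{uw_1,\dots,uw_{2k}\}$ together with the BFS tree and closing $T$-paths between consecutive $w_i$ in a DFS ordering does not produce $k$ consecutive even lengths, and the Bondy--Simonovits $(A,B)$-path lemma does not license this. That lemma (Lemma~\ref{BS} of this paper) applies to a graph \emph{consisting of a cycle with a chord} and produces $(A,B)$-paths of all lengths inside that theta subgraph; a star plus a subtree of the BFS tree is not such a structure, and you never exhibit a cycle with a chord to which the lemma could be applied. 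Concretely, if all of $w_1,\dots,w_{2k}$ happen to share a common parent $p$ in $T$, every cycle obtained by your construction is $u\,w_a\,p\,w_b\,u$ and has length $4$, so you obtain exactly one length. The theorem is of course still true in that situation (e.g.\ $H$ could contain $K_{2k,2k}$, which has cycles of every even length up to $4k$), but your construction is blind to them.

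The missing idea is that the high minimum degree must be exploited \emph{inside the levels}, not merely at the last nonempty one. Verstra\"ete's argument finds a bipartite layer $(L_{i},L_{i+1})$ that is dense enough to contain a cycle with a chord $\Theta$, takes a deepest common $T$-ancestor $r^\ast$ of $V(\Theta)$, and partitions $V(\Theta)$ into $(A,B)$ by which child of $r^\ast$ each vertex descends from. Since $\Theta$ is bipartite with parts $L_i\cap V(\Theta)$ and $L_{i+1}\cap V(\Theta)$, which is a different bipartition from $(A,B)$, Lemma~\ref{BS} yields $(A,B)$-paths in $\Theta$ of all lengths $<|V(\Theta)|$; each such path is closed through the two disjoint $T$-paths to $r^\ast$ into a cycle, giving consecutive (even) lengths and the radius bound. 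The present paper's Proposition~\ref{consecutive-even} (with the weaker constant $6k$) replaces $\Theta$ and Lemma~\ref{BS} by a two-colouring of the layer and the special-path lemma (Lemma~\ref{special-paths}); either way, the engine that interpolates over a whole interval of lengths is a single dense theta-like object in one layer, not a union of tree-paths through a fixed pendant vertex.
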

Let us note that Verstra\"ete's result, together with a simple induction, immediately yields
$ex(n,C_{2k})\leq 8kn^{1+1/k}$, thereby giving a short proof of the theorem of Bondy and Simonovits \cite{BS74}
along with an improved coefficient.
In an attempt to generalize Theorem \ref{Jacques-even-cycles}, Verstra\"ete \cite{V16} made the following conjecture for Berge cycles in $r$-graphs.

\begin{conjecture}{\rm (Verstra\"ete \cite{V16})} \label{Jacques-conjecture}
\label{conj:Berge}
Let $r\ge 3$. If $\CH$ is an $r$-graph which does not contain Berge cycles
of $k$ consecutive lengths, then H has average degree $O(k^{r-1})$ as $k\to \infty$.
\end{conjecture}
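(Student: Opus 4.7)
The plan is to deduce Conjecture \ref{Jacques-conjecture} from the linear-hypergraph result quoted in the introduction via a reduction lemma: there exists a constant $C_r$ such that every $r$-graph of average degree at least $C_r k^{r-1}$ contains a linear sub-$r$-graph of average degree at least $7r(k+1)$. Granted this lemma, the quoted theorem applied to the extracted linear sub-hypergraph immediately produces Berge cycles of $k$ consecutive lengths, which also lie in the original $r$-graph.

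To prove the reduction lemma, I would proceed in two steps. Step 1 (codegree thinning): pass to a sub-hypergraph $\CH_0 \subseteq \CH$ of average degree $\Omega(k^{r-1})$ in which every pair of vertices has codegree at most $T = O(k^{r-2})$. Step 2 (greedy linearization): in $\CH_0$, construct a linear sub-hypergraph $\CH'$ by scanning edges in an arbitrary order and retaining an edge whenever it shares at most one vertex with every previously retained edge. Each retained edge contains $\binom{r}{2}$ vertex-pairs, each of which lies in at most $T-1$ other edges of $\CH_0$, so at most $\binom{r}{2}(T-1) < r^2 T$ further edges can be blocked per retention. Hence $|E(\CH')| \ge |E(\CH_0)|/(1 + r^2 T) = \Omega(n k^{r-1}/k^{r-2}) = \Omega(nk)$, yielding average degree $\Omega(k)$ in $\CH'$; for $C_r$ chosen large enough this exceeds $7r(k+1)$, and the quoted linear-hypergraph theorem finishes the job.

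The main obstacle is Step 1: controlling how many edges must be discarded to bring all codegrees below $T$. A naive greedy deletion of one edge per high-codegree pair offers no good bound, since the total excess $\sum_{uv} \max(0, d_{uv} - T)$ is not controllable by the number of edges alone; we could wipe out essentially all of $\CH$. The right route is likely a dichotomy: if some pair $\{u,v\}$ has codegree far above $T$, then the link $(r-2)$-graph $L(u,v) := \{e \setminus \{u,v\} : \{u,v\} \subseteq e \in \CH\}$ is itself dense, and we can either recurse on the uniformity $r$ via an inductive form of the reduction lemma, or lift cycle structure from $L(u,v)$ by completing its Berge cycles through the common pair $\{u,v\}$; otherwise all codegrees are already moderate and Step 2 applies directly. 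Pinning down the sharp exponent $k^{r-1}$ amounts to showing that the codegree threshold $T = O(k^{r-2})$ exactly matches the density gap between the general and linear regimes, so that no quantitative loss occurs either in the thinning or in the greedy extraction.
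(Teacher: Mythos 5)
Your overall strategy---reduce to the linear-hypergraph theorem via a codegree dichotomy---is in the right spirit, and you are right to flag Step~1 as the crux. But the specific dichotomy you sketch does not close the gap, and the fix the paper uses is genuinely different in a way worth spelling out.

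The trouble with pair codegrees is exactly the one you acknowledge: there is no good greedy thinning. Your proposed repair, ``if some pair $\{u,v\}$ has codegree far above $T$, the link $L(u,v)$ is itself dense, so recurse,'' has an unaddressed hole: a large codegree $d(u,v)$ means $L(u,v)$ has many edges, not that it has large \emph{average degree}---the link could be spread thinly over nearly all of $V(\CH)$. Without a density statement for the link you can neither recurse with the required $\Omega(k^{r-3})$ bound nor lift long cycles from it, and you would still face the same thinning problem one level down.

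The paper's reduction lemma (Lemma~\ref{delta-system}) resolves this by replacing pair codegrees with $(r-1)$-set codegrees, and this changes the picture in two essential ways. First, the greedy thinning becomes clean: whenever some $(r-1)$-subset $e'$ of an edge has codegree at most $k$, retain one edge through $e'$ and discard the (at most $k$) others; this loses only a factor $k$ overall, and if the process never fires then \emph{every} $(r-1)$-set already has codegree at least $k+1$, i.e.\ $\delta_{r-1}\geq k+1$. Second, both branches of the dichotomy lead somewhere directly: when $\delta_{r-1}\geq k+1$, one can grow a tight path greedily, and Lemma~\ref{berge-cycle} extracts Berge cycles of every length in $\{3,\dots,k+2\}$ without any appeal to the linear theorem at all; when instead the survivors each contain an $(r-1)$-set of codegree $1$, those distinguished $(r-1)$-sets form an \emph{extendable} $(r-1)$-graph with at least $|\CH|/k$ edges, allowing an induction on uniformity. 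So the paper does not reduce everything to the linear case---the linear theorem is invoked only once, at the $r=3$ base case (Theorem~\ref{Thm:main3gr}), and even there the pair codegrees that arise are very special (a marked pair of codegree $1$ per edge, and a further split by whether any other pair has codegree $\geq 3$), which is what makes the final extraction of a linear $3$-graph cost only a constant factor rather than a factor of $k^{r-2}$. The lesson is that the ``right'' codegree to control is at level $r-1$, not level $2$: it simultaneously enables a lossless greedy, an unconditional source of short Berge cycles, and an extendable projection down one uniformity.
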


\noindent The complete $r$-graph on $k$ vertices shows that this conjecture, if true, is best
possible up to some constant factor (depending only on $r$).

Our main result is to confirm Conjecture \ref{Jacques-conjecture} by showing

\begin{theorem}\label{Thm:main}
Let $r\ge 3$. Any $r$-graph $\CH$ with average degree $\Omega(k^{r-1})$ contains Berge cycles of $k$ consecutive lengths.
\end{theorem}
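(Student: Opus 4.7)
The strategy is to reduce Theorem~\ref{Thm:main} to the intermediate linear-hypergraph result stated in the abstract: every $r$-uniform linear hypergraph with average degree at least $7r(k+1)$ contains Berge cycles of $k$ consecutive lengths. With this in hand, it suffices to extract from $\CH$ a linear sub-$r$-graph of average degree $\Omega(k)$; since $\CH$ has average degree $\Omega(k^{r-1})$, we can afford to lose a factor of $k^{r-2}$ in the reduction. As a first step, by the standard trick of iteratively deleting vertices whose degree is below half the current average, we may assume $\CH$ has minimum degree $\Omega(k^{r-1})$, and consequently $|E(\CH)|=\Omega(k^{r-1}|V(\CH)|)$.

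Next, consider the codegrees $d(u,v):=|\{e\in E(\CH):\{u,v\}\subseteq e\}|$ and fix a threshold $T=\Theta(k^{r-2})$. Suppose first that every pair satisfies $d(u,v)\leq T$. A simple greedy procedure then produces a linear sub-$r$-graph $\CH'$: repeatedly pick any edge $e$ of $\CH$ that does not share a pair with any edge already in $\CH'$, add $e$ to $\CH'$, and delete from $\CH$ every remaining edge sharing some pair with $e$. Each selected edge removes at most $\binom{r}{2}T$ edges of $\CH$, so $|E(\CH')|\geq |E(\CH)|/(\binom{r}{2}T)=\Omega(k|V(\CH)|)$. Thus $\CH'$ is a linear $r$-graph of average degree $\Omega(k)$, and the intermediate result delivers the required Berge cycles inside $\CH'\subseteq \CH$.

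The remaining case is when some pair $\{u,v\}$ has codegree exceeding $T=\Omega(k^{r-2})$, so there are many distinct edges all containing $\{u,v\}$, and the $(r-2)$-graph link $L:=\{e\setminus\{u,v\}:e\in \CH,\{u,v\}\subseteq e\}$ has many edges. Combined with the ambient high-degree condition at $u$ and $v$, this concentration can be used to build Berge paths from $u$ to $v$ of every length in a consecutive range of size $k$, which are then closed off through distinct edges containing $\{u,v\}$ to produce Berge cycles of $k$ consecutive lengths. An alternative is to recurse on the uniformity: apply the same codegree dichotomy to the link hypergraph $L$ (now of uniformity $r-2$) and iterate until the low-codegree greedy case eventually applies, reducing to the linear-hypergraph result on a derived structure.

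The principal obstacle is clearly this high-codegree case: greedy linearization fails exactly when the edges concentrate on a few pairs, so a structural argument of a different flavor is required. I expect the new adaptive lemma advertised in the abstract (replacing the Bondy--Simonovits lemma) to do its main work here, allowing the heavy-pair regime to be processed in a manner parallel to rather than derived from the linear-hypergraph argument. Finding the right formulation of that lemma --- flexible enough to cope with unrestricted pair-sharing while still producing consecutive cycle lengths --- is likely the technical crux of the paper.
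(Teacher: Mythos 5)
Your proposal has a genuine gap, and you correctly identify where it is: the high-codegree case is not resolved, and neither of your two suggested routes for it is worked out. The low-codegree half of your dichotomy is fine: if every pair has codegree at most $T=\Theta(k^{r-2})$, the greedy linearization indeed produces a linear sub-$r$-graph of average degree $\Omega(k)$, and the intermediate result (Theorem~\ref{Thm:Li3Berge}) finishes. But once some pair $\{u,v\}$ has codegree exceeding $T$, knowing only that the link $L$ has many $(r-2)$-sets gives you essentially no control: the vertex set of $L$ can be arbitrary, $L$ can be sparse, and the existence of many hyperedges through one fixed pair does not by itself furnish $u$--$v$ Berge paths of $k$ consecutive lengths outside those edges. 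You also misattribute the role of the paper's new ``special-path'' lemma: it replaces the Bondy--Simonovits $(A,B)$-path lemma \emph{inside} the linear-hypergraph argument (Lemmas~\ref{twice-radius-A} and~\ref{twice-radius-BC}), not in any heavy-pair regime.

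The paper avoids your difficulty by using a different and stronger dichotomy, at the level of $(r-1)$-sets rather than pairs, via a delta-system-type lemma (Lemma~\ref{delta-system}): either some subhypergraph $\CH'$ has $\delta_{r-1}(\CH')\geq k+1$, in which case a longest-tight-path argument (Lemmas~\ref{tight-path} and~\ref{berge-cycle}) directly produces Berge cycles of \emph{all} lengths in $\{3,\dots,k+2\}$ with no need for the linear-hypergraph machinery, or one can extract from the $(r-1)$-shadow an extendable $(r-1)$-graph with $|\CG|\geq|\CH|/k$, allowing induction on $r$ down to the base case $r=3$ (Theorem~\ref{Thm:main3gr}). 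It is only in that base case that a pair-codegree split appears, and there the threshold is the constant $3$ (not $\Theta(k)$); the high-pair case is handled by a random subset argument plus Verstra\"ete's graph theorem, and the low-pair case by linearization. The decisive advantage of the $(r-1)$-set dichotomy is that uniformly large $(r-1)$-codegree forces a long tight path, which is an extremely rigid structure; a single heavy pair forces nothing comparably usable, which is exactly why your high-codegree branch stalls.
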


Moreover, we will establish a stronger statement, in which, as in Theorem \ref{Jacques-even-cycles}, we have
additional control on the length of the shortest cycle in the collection, in terms of a parameter of a hypergraph
that is related to the radius of a graph. This stronger version then immediately yields bounds on the Tur\'an numbers
of Berge cycles (for both even and odd cycles simultaneously). See Section 5 for detailed discussions.

A hypergraph $\CH$ is {\it linear}, if $|e\cap f|\le 1$ for any distinct hyperedges $e,f\in \CE(\CH)$.
For linear hypergraphs, we have the following even stronger result.

\begin{theorem}\label{Thm:Li3Berge}
Let $r\geq 3$.
Any linear $r$-graph $\CH$ with average degree at least $7r(k+1)$ contains Berge cycles of $k$ consecutive lengths.
\end{theorem}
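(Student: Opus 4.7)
The plan is to reduce the linear hypergraph problem to a 2-uniform graph problem, run a BFS-based chord analysis, and then invoke the new lemma announced in the abstract (replacing Bondy--Simonovits) to extract $k$ consecutive Berge cycle lengths. First, by iteratively deleting vertices whose degree drops below $\tfrac{7r(k+1)}{2}$, I would extract a sub-hypergraph $\mathcal{H}'$ with minimum degree at least $\delta := \tfrac{7r(k+1)}{2}$; both linearity and the property of containing Berge cycles of the desired lengths are preserved under vertex deletion.

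Next, construct an auxiliary simple graph $G$ on $V(\mathcal{H}')$ by choosing one pair $\pi(e) \subseteq e$ for each hyperedge $e$. Since $\mathcal{H}'$ is linear, distinct hyperedges give distinct pairs, so $\pi$ is a bijection from $\mathcal{E}(\mathcal{H}')$ to $E(G)$; consequently every cycle of length $\ell$ in $G$ lifts to a Berge cycle of length $\ell$ in $\mathcal{H}'$ by using the hyperedge $e$ as the Berge edge for the spine pair $\pi(e)$. A careful greedy (or probabilistic) choice of $\pi$ makes the minimum degree of $G$ at least roughly $\tfrac{2\delta}{r} = 7(k+1)$.

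I would then run a BFS from some root $v_0$ in $G$ to obtain a rooted tree $T$, and analyze the non-tree edges. Each non-tree edge of $G$, equivalently each chord hyperedge of $\mathcal{H}'$, together with the $T$-path between its endpoints produces a Berge cycle in $\mathcal{H}'$ whose length is determined by the endpoints' depths and their lowest common ancestor. Pigeonholing on this common ancestor would locate a vertex $u$ at some depth $h$ through which many chord Berge cycles pass, creating a dense cluster of cycles whose lengths we can compare.

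The main obstacle, and the step where the new lemma is essential, is to promote this cluster into Berge cycles of $k$ \emph{consecutive} lengths of both parities, rather than merely $k$ consecutive \emph{even} lengths as Bondy--Simonovits style arguments over bipartite BFS trees yield. The expected mechanism is to exploit the $r-2 \geq 1$ uncommitted vertices inside each hyperedge: such a vertex can be spliced into the Berge spine to shift a chord cycle's length by $+1$, giving the parity flexibility absent in the graph setting and explaining the abstract's claim that the new lemma is ``adaptive for the hypergraph setting''. The new lemma should codify exactly this length-adjustment in the BFS/chord framework, so that combining it with the chord pigeonholing converts the many even-length chord cycles into the full range of $k$ consecutive Berge cycle lengths required by the theorem.
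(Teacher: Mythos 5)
Your outline shares some of the spirit of the paper's argument---both use a BFS-type tree in a derived object and analyse the non-tree hyperedges---but as written the proposal has two genuine gaps. First, you commit upfront to a single pair $\pi(e)\subseteq e$ per hyperedge and run an ordinary BFS in the resulting $2$-graph $G$. The paper instead reduces a linear $r$-graph only to a linear $3$-graph $\CG$ (a $3$-subset per hyperedge) and runs the \emph{maximal extendable skeleton} (Algorithm~\ref{BFS}) directly on $\CG$: when a hyperedge $uvw$ is processed with $u$ in the tree, the algorithm may add whichever of $v,w$ is not yet present. This adaptivity is precisely what gives Proposition~\ref{level-containment}, that every remaining hyperedge meets the tree only inside two consecutive levels $L_i\cup L_{i+1}$; that level-confinement underlies the decomposition into $\CA_i,\CB_i,\CC_i$ used in Lemmas~\ref{twice-radius-A} and~\ref{twice-radius-BC}, and then Lemma~\ref{twice-radius-removal} and Theorem~\ref{theorem-twice-radius}. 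Fixing $\pi$ first and BFS-ing in a $2$-graph does not give this control, and it is also internally inconsistent with your later plan to ``exploit the uncommitted vertices,'' since you have already thrown them away when forming $G$.

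Second, your guess at the content of the new lemma does not match the paper, and the mechanism for getting both parities is different from the ``splicing'' you describe. The replacement for the $(A,B)$-path lemma is Lemma~\ref{special-paths} (with hypergraph version Lemma~\ref{special-paths-hyper}): in a connected graph with edges $2$-coloured and the colour-$1$ subgraph of high average degree, there is a long path whose first edge has colour $2$ and all later edges have colour $1$. Nothing in it concerns BFS or length-adjustment. In the application, the colours record which subtree below the common ancestor $r^*$ a vertex descends from; the successive prefixes of the special path each close up into a cycle through $r^*$, and because each hyperedge contributes a third vertex $y_\ell$ landing in an adjacent level, the cycle lengths step by exactly one as the prefix grows. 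The paper never produces $k$ consecutive \emph{even} lengths and then shifts parity; the $k$ consecutive integer lengths come directly out of Lemmas~\ref{twice-radius-A} and~\ref{twice-radius-BC}. Your proposal is missing both the actual key lemma and the skeleton/colouring structure that deploys it.
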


\noindent This is also tight up to the constant factor, by considering a Steiner triple system on $k$ vertices.
As with Theorem \ref{Thm:main}, we will also establish a stronger version of Theorem \ref{Thm:Li3Berge}
with control on the length of the shortest cycle in the collection. 

As we should see,  Theorem \ref{Thm:Li3Berge} plays a central role in our results.
In fact, we will prove Theorem \ref{Thm:main} in Section 4 by reducing it to the $r=3$ case of Theorem \ref{Thm:Li3Berge}
via some reduction lemmas. These reduction lemmas given in Section 4 may be of independent interest in the study of
Tur\'an numbers of other Berge hypergraphs.

Let us now say a few words about the method we use in this paper, which we feel should find many future applications.
In many Tur\'an type results on cycles in graphs, the following lemma plays an important role. The lemma was implicit
in Bondy and Simonovits \cite{BS74} and was explicit in Verstra\"ete \cite{V00}.
Let $G$ be a graph and $A, B$ be disjoint subsets of $V(G)$.
An {\it$(A,B)$-path} is a path that has one endpoint in $A$ and the other in $B$.

\begin{lemma} {\bf (The $(A,B)$-path lemma)} {\rm (\cite{BS74}, see also \cite{V00})} \label{BS}
Let $H$ be a graph comprising a cycle with a chord. Let $(A,B)$ be a nontrivial partition of $V(H)$. Then $H$ contains
$(A,B)$-paths of every length less than $|V(H)|$, unless $H$ is bipartite with bipartition $(A,B)$.
\end{lemma}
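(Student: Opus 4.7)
The plan is to prove the contrapositive: assuming some $\ell \in \{1, \dots, n-1\}$ (with $n := |V(H)|$) has no $(A, B)$-path of length $\ell$ in $H$, show $H$ is bipartite with bipartition $(A, B)$. Write $H = C \cup \{xy\}$ and label $V(C) = \{u_0, u_1, \dots, u_{n-1}\}$ cyclically so that $x = u_0$ and $y = u_a$, splitting $C$ into arcs of lengths $a$ and $b = n - a$ (with $a, b \ge 2$); let $c \colon V(H) \to \{0, 1\}$ indicate $A$. The length-$\ell$ paths in $H$ are of two kinds: arcs of $C$ of length $\ell$, and \emph{chord-paths} $Q_x \cup \{xy\} \cup Q_y$ with $Q_x, Q_y$ vertex-disjoint arcs of $C$ ending at $x$ and starting at $y$ respectively, of lengths summing to $\ell - 1$. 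Chord-paths split into four subtypes according to which side of $x$ the arc $Q_x$ extends to, and likewise for $Q_y$.

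First, the $n$ arcs of $C$ of length $\ell$ being non-$(A,B)$-paths give $c(u_i) = c(u_{i+\ell})$ for all $i$, so $c$ descends to $\tilde c \colon \mathbb{Z}/d \to \{0, 1\}$ with $d := \gcd(\ell, n)$; non-triviality of $(A, B)$ forces $d \ge 2$. Next, each chord-path of length $\ell$ being non-$(A,B)$ translates, using the periodicity, into a constraint on $\tilde c$: either $\tilde c(r) = \tilde c(r + (a \pm 1))$ (for the two ``same-side'' subtypes, available respectively when $\ell \le a$ or $\ell \le b$) or $\tilde c(r) = \tilde c((a \pm 1) - r)$ (for the two ``mixed'' subtypes), each valid on an interval of $r$ determined by $\ell, a, b$. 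Combining these equations appropriately---two translations with shifts $a \pm 1$ when available, or two reflections whose axes differ by $1$ composing to translation by $2$, or a blend---will yield $\tilde c(r) = \tilde c(r + 2)$ on all of $\mathbb{Z}/d$, i.e., $\tilde c$ is $2$-periodic.

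Granting the $2$-periodicity, $\tilde c$ is non-constant and hence alternating on $\mathbb{Z}/d$, so $d$ is even; pulling back, $c$ alternates around $C$, $n$ is even, and every edge of $C$ is bichromatic. It remains to show the chord $u_0 u_a$ is bichromatic, equivalently that $a$ is odd. If $a$ were even, then: for $\ell$ odd a length-$\ell$ arc of $C$ is already an $(A, B)$-path by alternation; for $\ell$ even, the mixed-type chord-path $u_p\, x\, y\, u_{a+q}$ with $p + q = \ell - 1$ has endpoints of opposite colours (since $a$ even and $p + q$ odd make the parities of $p$ and $a + q$ differ), in either case contradicting the standing assumption. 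Hence $a$ and $b$ are both odd, every edge of $H$ is bichromatic, and $H$ is bipartite with bipartition $(A, B)$.

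The main obstacle is the combinatorial step deriving global $2$-periodicity of $\tilde c$. When $\ell \le \min(a, b)$, both same-side subtypes apply with $r$-intervals of length $\ell \ge d$, giving shifts $a \pm 1$ on all of $\mathbb{Z}/d$; their $\gcd$ is $\gcd(a + 1, a - 1) \mid 2$, so the period divides $2$. When $\ell > \max(a, b)$, only the mixed subtypes apply, but their $r$-intervals now have length $n - \ell \ge d$ and cover $\mathbb{Z}/d$, so the two reflections compose globally to translation by $2$. The intermediate regime $\min(a, b) < \ell \le \max(a, b)$ requires combining one same-side subtype with the mixed subtypes, using the $(a \pm 1)$-periodicity coming from the former to propagate the partial $2$-periodicity coming from the latter to all of $\mathbb{Z}/d$, with the parity of $a$ controlling whether one obtains a contradiction or lands in the bipartite conclusion.
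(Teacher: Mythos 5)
The paper does not prove this lemma; it is stated and cited from Bondy--Simonovits \cite{BS74} (implicit) and Verstra\"ete \cite{V00} (explicit), and the whole thrust of the paper is to develop Lemma \ref{special-paths} as a \emph{replacement} for it. So there is no ``paper's own proof'' to compare against. Evaluating the proposal on its own: the reduction is sound. The classification of length-$\ell$ paths in $H$ into arcs of $C$ and the four chord-path subtypes is exhaustive (vertex-disjointness of $Q_x, Q_y$ is exactly what limits each subtype's range of $p$), the arc constraint correctly gives $d$-periodicity with $d=\gcd(\ell,n)\geq 2$ by non-triviality, the four constraints $\tilde c(r)=\tilde c(r+(a\pm1))$ and $\tilde c(r)=\tilde c((a\pm1)-r)$ on the stated $r$-ranges are computed correctly (since $\ell\equiv0\pmod d$), and the endgame (two-periodicity and non-constancy of $\tilde c$ give alternation, then parity of $a$ is forced odd by exhibiting an $(A,B)$-arc when $\ell$ is odd or an $(A,B)$-chord-path of type $(+,+)$ when $\ell$ is even) is correct. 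The extreme regimes $\ell\leq\min(a,b)$ and $\ell>\max(a,b)$ are handled cleanly; for the latter, the key inequality $n-\ell\geq d$ holds because $d\mid n-\ell$ and $n-\ell\geq1$.

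The one genuine gap is the intermediate regime $\min(a,b)<\ell\leq\max(a,b)$, which the proposal describes only as ``propagate.'' The missing step is concrete: say $a\leq b$, so $(-,+)$ gives the global shift $\tilde c(r)=\tilde c(r+(a-1))$, while $(+,+)$ and $(-,-)$ give reflections only on $r$-windows of length $a$, from which one extracts $\tilde c(s)=\tilde c(s+2)$ for $s\in\{0,\dots,a-2\}$, a window of length $a-1$ that in general does \emph{not} cover $\mathbb{Z}/d$. To finish one must observe that the set $P=\{s:\tilde c(s)=\tilde c(s+2)\}$ is closed under the global shift, hence is a union of cosets of $\langle\gcd(a-1,d)\rangle$, and that $\gcd(a-1,d)\mid a-1$ so the window of length $a-1$ meets every such coset; the symmetric case $b<a\leq\ell$ uses $\gcd(a+1,d)\mid n-(a+1)=b-1$. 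Also, the clause ``with the parity of $a$ controlling whether one obtains a contradiction or lands in the bipartite conclusion'' is misplaced: the parity of $a$ plays no role in the propagation itself, only in the final step already given in the preceding paragraph. With the coset observation supplied, the argument is complete, and it is a clean, self-contained alternative to the (rather different, inductive) arguments one finds attributed to \cite{BS74,V00}.
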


Despite its highly successful applications,
one restriction of this powerful lemma is, however, that it does not easily generalize to hypergraphs, especially for cycle structures that are more restrictive than
Berge cycles. Faudree and Simonovits \cite{FS} introduced an alternative method, called the blowup method. They used the method
to study the Tur\'an number of a theta graph (which is the union of a number of internally disjoint paths of the same length between two fixed vertices).
The method of Faudree and Simonovits allows for applications to hypergraphs more easily (see \cite{CGJ} for such an application). However, the constant
involved in the Faudree-Simonovits method is usually very large. Therefore, it is desirable to look for new methods to add to the existing ones.

The method we use in this paper builds on the ones used in
\cite{GL-3uniform,GL,FO}, but  contains some novel ingredients and
overcomes some drawbacks of the existing methods.
The key ideas of our method are contained in the proofs of Lemma \ref{twice-radius-A} and Lemma \ref{twice-radius-BC},
both of which rely on a simple lemma (Lemma \ref{special-paths} and its hypergraph extension), used
in place of the $(A,B)$-path lemma of Bondy and Simonovits.
We expect our method to find further applications in both the graph and hypergraph settings.

The rest of the paper is organized as follows.
In Section 2, we introduce notation, some lemmas,
and a generalization of the breadth-first search tree for $3$-graphs.
In Section 3, we prove Theorem \ref{Thm:Li3Berge}.
In Section 4, we prove Theorem \ref{Thm:main}.
In Section 5, we discuss the Tur\'an numbers of Berge cycles in $r$-graphs and the Zarankiewicz numbers of even cycles.
In Section 6, we discuss some future directions.
Throughout this paper we make some but not significant efforts to optimize the constant factors used in the proofs.

\section{Preliminaries}
\subsection{Notation and terminologies}

Unless otherwise specified,  all hypergraphs $\CH$ discussed in this paper are {\it simple},
that is, all edges of $\CH$ are distinct subsets.
The {\it $k$-shadow} $\partial_k(\CH)$ of $\CH$ denotes the $k$-graph on the same vertex set of $\CH$,
whose edge set consists of all subsets of $k$ vertices contained in members of $\CE(\CH)$.
If $k=2$, we then just write it as $\partial \CH$ and call it {\it shadow} of $\CH$.
We say a subhypergraph $\CG$ of $\partial_k(\CH)$ is {\it extendable} in $\CH$,
if there exists an injection $\psi: \CE(\CG)\to \CE(\CH)$ such that $e\subseteq \psi(e)$ for each $e\in \CE(\CG)$.
Such an injection $\psi$ is called an {\it extension} of $\CG$. It is immediate from our definition that
$$\CH \text{ contains a Berge cycle of length } \ell \Longleftrightarrow \partial \CH \text{ contains an extendable cycle } C_\ell.$$

Let $\CH$ be a hypergraph. By $|\CH|$, we mean the total number of hyperedges in $\CH$.
For a vertex $v\in \CV(\CH)$, the {\it degree} $d_{\CH}(v)$ denotes the number of hyperedges in $\CH$ containing the vertex $v$.
We use $\delta(\CH)$ and $d(\CH)$ to denote the {\it minimum degree} and the {\it average degree} of $\CH$, respectively.
For a subset $S\subseteq \CV(\CH)$ of size at least two, the {\it co-degree}
$d_{\CH}(S)$ denotes the number of hyperedges of $\CH$ containing $S$.
If $S=\{u,v\}$, then we will just write it as $d_{\CH}(u,v)$.
For $k\ge 2$, the {\it minimum $k$-degree} $\delta_k(\CH)$ of $\CH$ is the minimum of non-zero
co-degrees $d_{\CH}(S)$ over all subsets $S\subseteq \CV(\CH)$ of size $k$.
A subset $S\subseteq \CV(\CH)$ is a {\it vertex cover} of $\CH$
if each edge of $\CH$ contains a vertex in $S$.

A {\it linear path} of length $\ell$ is a hypergraph with edges $e_1,e_2,..., e_\ell$ such that $|e_i\cap e_{i+1}|=1$ for every $1\le i\le \ell-1$,
and $e_i\cap e_j=\emptyset$ for other pairs $\{i,j\}$, where $i\neq j$.
A vertex in the first or last edge of a linear path $\CP$ that has degree $1$ in $\CP$ is called an {\it endpoint} of $\CP$.
A {\it linear cycle} of length $\ell$ is a hypergraph with edges $e_1,e_2,..., e_\ell$ such that $|e_i\cap e_{i+1}|=1$ for every $1\le i\le \ell-1$,
$|e_1\cap e_\ell|=1$, and $e_i\cap e_j=\emptyset$ for other pairs $\{i,j\}$, where $i\neq j$.
Certainly a linear path/cycle is also a Berge path/cycle.
\footnote{The {\it length} of a path or a cycle (of any kind) always denotes the number of edges it contains.}


\subsection{Some lemmas on paths and cycles in hypergraphs}
In this subsection, we collect and establish several lemmas to be used later,
including the pivotal Lemma \ref{special-paths} and its hypergraph extension as mentioned in the introduction.

An $r$-graph $\CH$ is {\it $r$-partite}, if there exists an $r$-partition $(A_1, A_2,...,A_r)$ of $\CV(\CH)$ such that
for any edge $e\in \CE(\CH)$, $|e\cap A_i|=1$ for each $i\in [r]$.
The following lemma from \cite{EK} is well-known.

\begin{lemma}\label{r-partite} {\rm (\cite{EK})}
Every $r$-graph $\CH$ has an $r$-partite subhypergraph $\CH'$ satisfying $|\CH'|\ge \frac{r!}{r^r}\cdot |\CH|$.
\end{lemma}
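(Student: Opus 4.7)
The plan is to use a straightforward probabilistic (first-moment) argument, which is the standard Erdős–Kleitman approach. The goal is to produce an $r$-partition $(A_1,\ldots,A_r)$ of $\CV(\CH)$ such that the number of edges $e\in\CE(\CH)$ that are \emph{transversal}, i.e.\ satisfy $|e\cap A_i|=1$ for every $i\in[r]$, is at least $\frac{r!}{r^r}|\CH|$; then the subhypergraph $\CH'$ consisting of these transversal edges is $r$-partite and has the desired size.

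First, I would construct a random partition by assigning each vertex $v\in\CV(\CH)$ independently and uniformly at random to one of the classes $A_1,\ldots,A_r$. For a fixed edge $e\in\CE(\CH)$, since $|e|=r$, the probability that its $r$ vertices land in $r$ distinct classes (one per class) equals the number of bijections from $e$ to $[r]$ divided by the total number of assignments of $e$, namely
\[
\Pr[e\text{ is transversal}] \;=\; \frac{r!}{r^r}.
\]
Let $X$ denote the number of transversal edges. By linearity of expectation,
\[
\mathbb{E}[X] \;=\; \sum_{e\in\CE(\CH)} \Pr[e\text{ is transversal}] \;=\; \frac{r!}{r^r}\,|\CH|.
\]

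Since $X$ is a random variable with expectation $\frac{r!}{r^r}|\CH|$, there exists at least one outcome of the random partition for which $X\ge \frac{r!}{r^r}|\CH|$. Fix such a partition $(A_1,\ldots,A_r)$ and let $\CH'$ be the hypergraph on $\CV(\CH)$ whose edges are the transversal edges of $\CH$ under this partition. Then $\CH'$ is an $r$-partite $r$-graph with parts $A_1,\ldots,A_r$ and $|\CH'|\ge \frac{r!}{r^r}|\CH|$, as required.

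There is no real obstacle here — the whole argument rests on linearity of expectation and the uniform random coloring, and the bound $r!/r^r$ arises directly from the probability that $r$ independent uniform $[r]$-valued random variables take all distinct values. The only mild subtlety would be to note that derandomization is not needed for the statement (existence suffices), though one could equivalently argue by counting the pairs (partition, transversal edge) and averaging.
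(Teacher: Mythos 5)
Your proof is correct and is the standard first-moment (random $r$-coloring) argument for the Erd\H{o}s--Kleitman lemma; the paper does not supply its own proof but simply cites \cite{EK}, and your argument matches the classical one. The computation $\Pr[e\text{ transversal}]=r!/r^r$ and the application of linearity of expectation are both exactly right.
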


\begin{lemma}\label{prop:ave->min}
Every $r$-graph $\CH$ has a subhypergraph $\CH'$ satisfying that $d(\CH')\geq d(\CH)$ and $\delta(\CH')\ge \frac{1}{r}\cdot d(\CH)$.
\end{lemma}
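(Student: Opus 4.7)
The plan is to apply the classical iterative low-degree-vertex deletion argument, suitably adapted to $r$-graphs. Set $d := d(\CH)$, which I may assume positive (otherwise $\CH' = \CH$ trivially works). Starting from $\CH_0 := \CH$, I will repeatedly look for a vertex $v$ in the current $\CH_i$ with $d_{\CH_i}(v) < d/r$ and, if one exists, form $\CH_{i+1}$ by deleting $v$ together with all hyperedges containing it. The process terminates at some $\CH' = \CH_i$ since $|\CV|$ strictly decreases at each step, and by construction $\delta(\CH') \geq d/r = d(\CH)/r$.

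The one thing that needs checking is that the average degree stays at least $d$ throughout, which I plan to prove by induction on $i$ using the rewritten form $r|\CE(\CH_i)| \geq d \cdot |\CV(\CH_i)|$. Deleting a vertex $v$ of degree strictly less than $d/r$ removes fewer than $d/r$ hyperedges, so
\[
r|\CE(\CH_{i+1})| \;>\; r|\CE(\CH_i)| - d \;\geq\; d\,|\CV(\CH_i)| - d \;=\; d\,|\CV(\CH_{i+1})|,
\]
which propagates the invariant (and even strengthens it to strict inequality after the first deletion).

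There is no genuine obstacle in this argument; the only bookkeeping point will be to observe that the invariant $d(\CH_i) \geq d > 0$ prevents the process from deleting every vertex, so the resulting $\CH'$ is nonempty and satisfies both $d(\CH') \geq d(\CH)$ and $\delta(\CH') \geq d(\CH)/r$. Thus the lemma will follow immediately from the stopping hypergraph of this deletion process.
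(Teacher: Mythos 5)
Your proof is correct and takes essentially the same approach as the paper: the paper phrases the argument as choosing a smallest subhypergraph satisfying $|\CH'| \geq |\CV(\CH')|\cdot d/r$ and deriving $\delta(\CH') \geq d/r$ from minimality, while you run the equivalent iterative low-degree deletion process to termination. Both rest on the same observation that deleting a vertex of degree below $d/r$ preserves the invariant $r|\CE| \geq d\,|\CV|$.
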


\begin{proof} Let $d=d(\CH)$. Then $|\CH|= |\CV(\CH)|\cdot d/r$.
Let $\CH'$ be a smallest subhypergraph of $\CH$ satisfying $|\CH'|\geq |\CV(\CH')|\cdot d/r$;
$\CH'$ exists since $\CH$ satisfies the inequality. If $\CH'$ contains a vertex $x$ of degree less than $d/r$,
then deleting $x$ from $\CH'$ yields a smaller subhypergraph $\CH''$ with $|\CH''|\geq |\CV(\CH'')|\cdot d/r$,
a contradiction. Hence $\delta(\CH')\geq d/r$. Also, since $|\CH'|\geq |\CV(\CH')|\cdot d/r$, we have
$d(\CH')\geq d$.
\end{proof}

We need the following classic result of Erd\H{o}s and Gallai \cite{EG}.

\begin{theorem} {\rm \cite{EG}} \label{EG-theorem}
Every $n$-vertex graph with more than $m(n-1)/2$ edges has a cycle of length at least $m+1$.
Every $n$-vertex graph with more than $mn/2$ edges has a path of length at least $m+1$.
\end{theorem}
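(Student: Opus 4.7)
The plan is to prove both statements by induction on $n$, handling the path version first; the cycle version follows by the same scheme with minor adjustments. In each case the argument splits on the minimum degree: if $\delta(G)$ is small, a low-degree vertex can be deleted and the inductive hypothesis applied; if $\delta(G)$ is large, the required path or cycle is extracted directly via a longest-path argument.

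For the path statement, suppose $G$ has $n$ vertices and more than $mn/2$ edges. If $G$ is disconnected, then some component $C$ satisfies $|E(C)| > m|V(C)|/2$ (else summing gives the opposite of the hypothesis), so induction applies to $C$. Thus assume $G$ is connected. If some vertex $v$ has $d_G(v) \le m/2$, then $G-v$ has more than $mn/2 - m/2 = m(n-1)/2$ edges, and induction produces a path of length at least $m+1$. Otherwise $\delta(G) \ge \lceil (m+1)/2\rceil$, and I would invoke the classical fact that a connected graph with minimum degree $\delta$ contains a path of length at least $\min(2\delta,\, n-1)$; this yields a path of length at least $m+1$ provided $n \ge m+2$, while for $n \le m+1$ the hypothesis $|E(G)| > mn/2$ is incompatible with the trivial bound $|E(G)| \le \binom{n}{2}$.

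For the cycle statement the same induction works: if $\delta(G) \le m/2$ delete a minimum-degree vertex, and the strict inequality $|E(G)| > m(n-1)/2$ passes to $G-v$, which has more than $m(n-1)/2 - m/2 = m(n-2)/2 = m((n-1)-1)/2$ edges. If instead $\delta(G) > m/2$, I would restrict attention to an endblock of $G$ (a maximal 2-connected subgraph), where the minimum degree is still at least $\lceil (m+1)/2\rceil$ because internal vertices of the block retain their neighbors, and apply the Dirac-type theorem that a 2-connected graph with minimum degree $d$ has circumference at least $\min(2d,\, n)$.

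The main obstacle is the high-minimum-degree case, where the inductive step is unavailable and one must directly extract a long path or cycle from local structure. I would handle this by the standard rotation argument on a longest path $P = v_0 v_1 \ldots v_k$: since $P$ cannot be extended, $N(v_0) \cup N(v_k) \subseteq V(P)$, which forces $k \ge \delta$, and pivoting on edges from $v_k$ back into $P$ produces a cycle on $V(P)$ whose length can be bootstrapped to $\min(2\delta,\, n)$ in the 2-connected setting and to a Hamiltonian-type path of length $\min(2\delta,\, n-1)$ in the connected setting. This is the step where the exact constants in the two thresholds $mn/2$ and $m(n-1)/2$ are calibrated so that the induction and the direct extraction meet cleanly at $\delta = \lceil (m+1)/2\rceil$.
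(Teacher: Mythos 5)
The paper does not give a proof of this theorem; it simply cites the classical Erd\H{o}s--Gallai result \cite{EG}, so I will assess your argument on its own merits.

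Your treatment of the path statement is sound. The reductions for disconnected graphs and for a vertex of degree at most $m/2$ are correct, and in the remaining case $\delta(G) > m/2$ the arithmetic gives $2\delta \geq m+1$; combined with the fact that the hypothesis forces $n \geq m+2$, the standard rotation lemma (a connected graph with minimum degree $\delta$ has a path of length at least $\min(2\delta, n-1)$) finishes the job.

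The cycle argument has a genuine gap in the last case. When $\delta(G) > m/2$ and $G$ has a cut vertex, you pass to an endblock $B$ with cut vertex $c$ and assert $\delta(B) \geq \lceil (m+1)/2 \rceil$. That is not true: only the non-cut vertices of $B$ keep their full $G$-degree, while $d_B(c)$ can be as small as $2$, because the rest of $c$'s degree is distributed across other blocks. Dirac's theorem therefore does not apply to $B$ as stated. Moreover, even a version of Dirac tolerating one exceptional vertex would only yield a cycle of length $\min(2d, |V(B)|)$, and all one can guarantee for the endblock is $|V(B)| \geq \delta + 1 > m/2 + 1$; so $|V(B)|$ may well be smaller than $m+1$, and the endblock can simply be too small to carry the required cycle. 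The standard repair avoids endblocks: if $c$ is a cut vertex and $C_1, \dots, C_t$ are the components of $G - c$, set $G_i = G[V(C_i) \cup \{c\}]$. Then $\sum_i |E(G_i)| = |E(G)|$ and $\sum_i (|V(G_i)| - 1) = n - 1$, so by pigeonhole some $G_i$ satisfies $|E(G_i)| > m(|V(G_i)|-1)/2$ with fewer than $n$ vertices, and one applies the induction hypothesis to $G_i$ directly --- no degree control inside $G_i$ is needed. Dirac's theorem is then invoked only when $G$ itself is $2$-connected with $\delta(G) > m/2$, where it cleanly gives a cycle of length at least $\min(2\delta, n) \geq m+1$.
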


\begin{lemma}\label{prop:lin3par}
Let $\CH$ be a linear $3$-partite 3-graph with a $3$-partition $(V_1,V_2,V_3)$ such that $d(\CH)\ge \frac{3p}{2}$.
There exists a vertex $u\in V_1$ such that for each $\ell\in [p]$, there exists an extendable path of length $\ell$ in $\partial \CH$ from $u$ to a vertex in $V_2$.
\end{lemma}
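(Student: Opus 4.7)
The plan is to (i) reduce to a subhypergraph with large minimum degree via Lemma~\ref{prop:ave->min}, (ii) extract a long path in the bipartite shadow graph on $V_1 \cup V_2$ whose prefixes immediately yield the odd-length extendable paths, and (iii) attach a short two-edge detour through $V_3$ to produce the even-length ones. For $\{i,j\} \subseteq [3]$, write $B_{ij}$ for the bipartite graph on $V_i \cup V_j$ whose edges are the shadow pairs $\{x,y\}$ contained in some hyperedge of $\CH$. By linearity, each edge of $B_{ij}$ arises from a \emph{unique} hyperedge, and consequently any simple path in $B_{12}$ lifts to an extendable path in $\partial \CH$: two consecutive edges of a simple path are distinct pairs and hence determine distinct hyperedges.

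I would start by applying Lemma~\ref{prop:ave->min} to replace $\CH$ by a subhypergraph with $d(\CH) \ge 3p/2$ and $\delta(\CH) \ge p/2$; restricting the $3$-partition to its vertex set preserves linearity and $3$-partiteness. Since $|E(B_{12})| = |\CH| \ge np/2 > (p-1)(|V_1|+|V_2|)/2$ (using $|V_3| \ge 1$), Theorem~\ref{EG-theorem} gives a path $P = x_0 x_1 \cdots x_L$ in $B_{12}$ of length $L \ge p$; orienting so that $x_0 \in V_1$, I set $u := x_0$. For every odd $\ell \in [p]$, the length-$\ell$ prefix $x_0 x_1 \cdots x_\ell$ is already an extendable path from $u$ to $x_\ell \in V_2$.

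For each even $\ell \in [p]$, I would take the length-$(\ell-2)$ prefix of $P$ (ending at $v := x_{\ell-2} \in V_1$) and attach a two-edge detour $v \to y \to z$ with $y \in V_3$ via $B_{13}$ and $z \in V_2$ via $B_{23}$. The vertex $y$ cannot collide with any prefix vertex, so simplicity and extendability amount to $z \notin \{x_1, x_3, \ldots, x_{\ell-3}\}$, $\{x_{\ell-3}, v, y\} \notin \CE(\CH)$, and $\{v, y, z\} \notin \CE(\CH)$. A direct count using $\delta(\CH) \ge p/2$ and linearity gives at least $p/2 - 1$ admissible choices of $y$ and, for each such $y$, at least $p/2 - \ell/2$ admissible $z$'s, which is strictly positive for every $\ell \le p - 2$.

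The main obstacle is the tight case $\ell = p$, where the naive count yields zero. To push through, I plan to exploit the bijection $y \mapsto z^{\ast}_y$ between $N_{B_{13}}(v)$ and $N_{B_{12}}(v)$, where $z^{\ast}_y$ is the $V_2$-vertex completing the hyperedge $\{v, y, z^{\ast}_y\}$. Under this bijection, the single $y$ forbidden by $\{x_{\ell-3}, v, y\} \notin \CE(\CH)$ is exactly the preimage of $x_{\ell-3}$, so removing it leaves a bijection from the admissible $y$'s to $N_{B_{12}}(v) \setminus \{x_{\ell-3}\}$. A careful redistribution of the exclusions --- combined with the slightly stronger $L \ge p+1$ that a strict application of Erd\H{o}s--Gallai yields whenever $|V_3| \ge 1$ --- should let one extendability obstruction be absorbed into the simplicity exclusion list, producing a valid pair $(y,z)$ even when $\ell = p$. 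This final delicate bookkeeping is where I expect the bulk of the technical work.
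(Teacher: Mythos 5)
Your plan diverges from the paper at the key choice of bipartite shadow, and that difference is what creates the gap you flag. The paper works in the shadow $B_{13}$ on $V_1 \cup V_3$ rather than $B_{12}$: it applies Erd\H{o}s--Gallai to find an alternating path $u_1 v_1 u_2 v_2 \cdots$ with $u_i \in V_1$, $v_i \in V_3$, and for each $s$ produces the two required paths $P_{2s-1} = u_1 v_1 \cdots u_s w$ and $P_{2s} = u_1 v_1 \cdots u_s v_s w'$ by completing the last covered pair to its unique hyperedge and taking the third vertex $w$ or $w' \in V_2$. Because the long path lives entirely in $V_1 \cup V_3$ and the $V_2$-endpoint is added only at the last step, there are \emph{no} collisions to rule out; extendability follows at once from linearity (consecutive shadow edges share only one vertex, so they lift to distinct hyperedges). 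This sidesteps the entire counting problem.

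Your choice of $B_{12}$ makes the odd lengths immediate (prefixes), but forces a detour through $V_3$ for even lengths, and that detour must return to $V_2$ while avoiding the $V_2$-vertices already used on the prefix. As you correctly compute, at $\ell = p$ (with $p$ even) the forbidden $z$'s number up to $p/2$ while $|N_{B_{23}}(y)|$ is only guaranteed to be $\geq p/2$, so the naive bound is exactly zero. The proposed repair --- pushing the one extendability obstruction for $y$ into the simplicity list via the bijection $y \mapsto z^{\ast}_y$, and using $L \ge p+1$ --- does not close this: removing the bad $y'$ (whose image is $x_{\ell-3}$) gives a bijection from admissible $y$'s onto $N_{B_{12}}(v) \setminus \{x_{\ell-3}\}$, but that tells you nothing about whether $z^{\ast}_y$ lands inside $\{x_1, x_3, \ldots, x_{\ell-5}\}$ (which is what would let you absorb $z^{\ast}_y$ into the simplicity exclusions and save one $z$), nor does $L \ge p+1$ manufacture an extra neighbor of $y$ in $B_{23}$. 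As written, the tight case remains open, so the argument is incomplete. The fix is essentially to do what the paper does: build the long path in $B_{13}$ so the final $V_2$-vertex is automatically fresh.
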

\begin{proof}
Let $n=|\CV(\CH)|$. Let $G=\{(e\cap (V_1\cup V_3): e\in \CH\}$. Since $\CH$ is linear, $|G|=|\CH|\geq \frac{pn}{2}>\frac{pn(G)}{2}$.
Let $t=\ce{\frac{p}{2}}$. Then $2t\leq p+1$.
 Theorem \ref{EG-theorem}, $G$ contains a path $P$ of length $2t$. Let $P=u_1v_1u_2v_2...u_tv_t$.
By symmetry, we may assume that $u_1\in V_1$. Then $\forall i\in [s], u_i\in V_1, v_i\in V_3$.
 Since $\CH$ is $3$-partite and linear, different edges of $P$ extend to different edges of $\CH$.

Let $s\in [t]$, we construct extendable paths of length $2s-1$ and $2s$, respectively, in $\partial\CH$ from $u_1$ to a vertex in $V_2$.
Let $u_s v_s w$ be the unique edge of $\CH$ containing $u_s v_s$, where $w\in V_2$.
Let $P_{2s-1}=u_1v_1...u_s w$. Then $P_{2s-1}$ has length $2s-1$ and it is easy to see that it is extendable.
Next, let $v_s u_{s+1} w'$ be the unique edge of $\CH$ containing $v_su_{s+1}$, where $w'\in V_2$.
Let $P_{2s}=u_1v_1...u_s v_s w'$. Then $P_{2s}$ has length $2s$ and is extendable, completing the proof.
\end{proof}

We say that a hypergraph $\CH$ is {\it connected} if $\partial\CH$ is connected.

\begin{lemma} \label{linear-uv-path}
Let $\CH$ be a connected linear hypergraph and $X,Y$ two disjoint sets of vertices in $\CH$.
Then there exists a linear $(X,Y)$-path in $\CH$.
\end{lemma}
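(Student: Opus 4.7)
The plan is to take a shortest Berge walk from $X$ to $Y$ and argue that minimality plus linearity of $\CH$ automatically upgrades it to a linear $(X,Y)$-path. Since $\CH$ is connected, $\partial \CH$ is connected, so there exist a sequence of distinct hyperedges $e_1,\ldots,e_\ell$ together with vertices $x\in X\cap e_1$ and $y\in Y\cap e_\ell$ such that $e_i\cap e_{i+1}\neq \emptyset$ for every $1\leq i\leq \ell-1$. I choose such a sequence minimizing $\ell$, and let $v_i$ denote a vertex in $e_i\cap e_{i+1}$; by linearity of $\CH$, $|e_i\cap e_{i+1}|=1$, so $v_i$ is unique.

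Next I would verify the three conditions defining a linear path. The condition $|e_i\cap e_{i+1}|=1$ is already guaranteed by linearity. For pairwise disjointness of non-consecutive edges, suppose $e_i\cap e_j\neq \emptyset$ for some $i<j$ with $j-i\geq 2$, and let $w$ be the (unique) shared vertex. Then the subsequence $e_1,\ldots,e_i,e_j,\ldots,e_\ell$ is still a valid Berge walk from $x\in X$ to $y\in Y$ whose consecutive edges intersect (using $w$ to bridge $e_i$ and $e_j$), but its length $\ell-(j-i-1)$ is strictly smaller than $\ell$, contradicting minimality.

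I would then handle the endpoint conditions and vertex-distinctness needed for a genuine $(X,Y)$-path. If $x\in e_j$ for some $j\geq 2$, then $e_j,\ldots,e_\ell$ is a shorter valid walk starting at $x\in X$, contradicting minimality; similarly $y\notin e_j$ for $j\leq \ell-1$. Hence $x$ has degree $1$ in the path (lying only in $e_1$) and $y$ has degree $1$ (lying only in $e_\ell$), so $x$ and $y$ are endpoints in $X$ and $Y$ respectively. For the spine $x,v_1,\ldots,v_{\ell-1},y$ to consist of $\ell+1$ distinct vertices, note that $x\neq v_i$ because $v_i\in e_{i+1}$ and $x\notin e_{i+1}$, similarly $y\neq v_i$, and if $v_i=v_j$ with $i<j$ then $v_i\in e_i\cap e_{j+1}$, which is empty by the non-consecutive disjointness established above.

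There is no real obstacle here: the proof is a standard shortest-walk exchange argument, and the only substantive input is that linearity turns every stray intersection between non-consecutive edges into a legitimate shortcut. The main thing to keep straight is that the minimality of $\ell$ is used three times in parallel, namely to forbid shortcuts between non-consecutive edges, to force $x$ to live only in $e_1$, and to force $y$ to live only in $e_\ell$.
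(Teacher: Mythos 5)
Your proof is correct and uses essentially the same strategy as the paper: take a shortest path/walk and use exchange arguments driven by linearity to upgrade it to a linear path. The only superficial difference is that the paper picks a shortest $(X,Y)$-path in the shadow graph $\partial\CH$ and then lifts each edge to its unique containing hyperedge, whereas you pick a shortest sequence of hyperedges directly and then verify distinctness of spine vertices by hand; the two are interchangeable and both rest on the same minimality-plus-shortcut idea.
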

\begin{proof}
Since $\CH$ is connected, $\partial \CH$ is a connected $2$-graph. Let $P$ be a shortest $(X,Y)$-path in
$\partial \CH$. Suppose $P=u_1u_2\dots u_m$ where $u_1\in X, u_m\in Y$. For each $i\in [m-1]$, let
$e_i$ denote the unique edge of $\CH$ containing $u_iu_{i+1}$. If $e_i\cap V(P)\neq \{u_i, u_{i+1}\}$ for some $i\in [m-1]$,
then we can find a shorter $(X,Y)$-path in $\partial\CH$ than $P$, a contradiction.
Likewise if $e_i\cap e_j$ contains a vertex outside $V(P)$ then since $\CH$ is linear, $|i-j|\geq 2$,
in which case we can find a shorter $(X,Y)$-path in $\partial\CH$ than $P$, a contradiction.
Therefore, $e_1,e_2,\dots, e_m$ form a linear $(X,Y)$-path in $\CH$.
\end{proof}

The next lemma can be viewed as a replacement for the $(A,B)$-path lemma (i.e., Lemma \ref{BS}).
This lemma and its hypergraph extension will be crucial in our proofs.
\footnote{Note that this lemma yields a better constant than its hypergraph extension.}

\begin{lemma} \label{special-paths}
Let $G$ be a connected graph whose edges are colored with $1$ and $2$ such that there is at least an edge of each color.
For $i\in [2]$, let $G_i$ denote the subgraph of $G$ consisting of edges of color $i$.
If $d(G_1)\geq p+1$, then there exists a path of length at least $p$ in $G$ such that the first edge has color $2$ and the others all have color $1$.
\end{lemma}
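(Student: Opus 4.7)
The plan is to combine the Erd\H{o}s--Gallai theorem (Theorem~\ref{EG-theorem}) with a maximality argument and, in the tight regime, P\'osa-style rotations. First, from $d(G_1)\geq p+1$ we have $|E(G_1)|\geq (p+1)n/2>pn/2$ with $n=|V(G)|$, so Theorem~\ref{EG-theorem} produces a color-$1$ path in $G_1$ of length at least $p+1$. I would fix a \emph{longest} color-$1$ path $P=v_0v_1\cdots v_k$ in $G_1$; then $k\geq p+1$, and by the maximality of $P$, $N_{G_1}(v_0)\cup N_{G_1}(v_k)\subseteq V(P)$.

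Next, I would try to attach a color-$2$ edge at $v_0$ (or symmetrically at $v_k$). If some color-$2$ edge $v_0x$ satisfies $x\notin V(P)$, then $xv_0v_1\cdots v_k$ is a desired path of length $k+1\geq p+2$. Otherwise every color-$2$ neighbor of $v_0$ equals some $v_j\in V(P)$ with $j\geq 2$ (since $v_0v_1$ is color-$1$), and the two candidate paths $v_jv_0v_1\cdots v_{j-1}$ (of length $j$) and $v_0v_jv_{j+1}\cdots v_k$ (of length $k-j+1$) give what we want unless $j$ lies in the ``middle range'' $[k-p+2,\,p-1]$. This range is nonempty only when $k\leq 2p-3$; in particular, the case $p\leq 3$ and the regime $k\geq 2p-2$ are settled by this direct argument.

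The main obstacle is the remaining regime $p+1\leq k\leq 2p-3$, where the endpoints of every longest color-$1$ path might have their color-$2$ neighbors confined to the middle range. To handle it I would use P\'osa-style rotations: a color-$1$ edge $v_0v_i$ with $i\geq 2$ converts $P$ into another longest color-$1$ path $v_{i-1}v_{i-2}\cdots v_0v_iv_{i+1}\cdots v_k$ with left endpoint $v_{i-1}$, and iterating at both ends produces a set $T\subseteq V(P)$ of reachable endpoints. The previous step applies to every $r\in T$ in its corresponding rotated path, so if the lemma still failed, each $r\in T$ would have all its $G$-neighbors inside $V(P)$ and its color-$2$ attachments landing in the middle range of the rotated path. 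Combining this structural rigidity with $|E(G_1)|\geq (p+1)n/2$ and the connectedness of $G$ should then force a contradiction by edge-counting. The extremal example $G_1=K_{p+2}$ with a single interior color-$2$ edge (which achieves $d(G_1)=p+1$ and yields a good path of length exactly $p+1$) shows that the bound is tight, so this counting step must be executed sharply.
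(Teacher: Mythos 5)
There is a genuine gap. Your outline in the second paragraph silently assumes that $v_0$ (or $v_k$) has at least one color-$2$ neighbor, but nothing forces an endpoint of a longest color-$1$ path to be incident to a color-$2$ edge; in fact you never invoke the connectedness of $G$, which is exactly the hypothesis needed to reach a color-$2$ edge from wherever the dense color-$1$ structure lives. Consider, for instance, $G_1=K_{p+2}$ on $\{u_1,\dots,u_{p+2}\}$ with a single pendant vertex $w$ attached to $u_1$ by a color-$2$ edge: the longest color-$1$ path is a Hamiltonian path of $K_{p+2}$, which need not have $u_1$ as an endpoint, so your case analysis produces no path at all. Your third paragraph acknowledges this regime and proposes P\'osa rotations plus an edge count, but that step is not carried out, and it is not clear that rotations eventually bring a vertex incident to a color-$2$ edge to the endpoint position in general (the reachable-endpoint set $T$ need not include such a vertex when $G_1$ is not a clique). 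So the proposal is at best a plausible program, not a proof.

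For comparison, the paper's argument avoids this difficulty entirely by fixing a color-$2$ edge $xy$ first, deleting $x$ from $G_1$ (still leaving more than $(p-1)n/2$ color-$1$ edges), and extracting from Erd\H{o}s--Gallai a color-$1$ \emph{cycle} $C$ of length at least $p$ rather than a path. Working with a cycle is the key: one can enter $C$ at any vertex $z$ and still traverse $|C|-1\geq p-1$ color-$1$ edges. Connectedness of $G$ then yields a shortest path $P$ from $\{x,y\}$ to $C$; either $P$ is entirely color-$1$, in which case prepending $xy$ to $P\cup C$ minus one cycle edge at $z$ gives the desired path, or $P$ already contains a color-$2$ edge, in which case one truncates $P$ to the shortest subpath from $z$ back to a color-$2$ edge and does the same surgery. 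Every case closes in one line, and no maximality or rotation machinery is needed. If you want to salvage your approach you must (i) handle the endpoint-without-color-$2$-neighbor case, necessarily via connectedness, and (ii) actually execute the rotation and counting argument; the paper's cycle-based route sidesteps both.
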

\begin{proof}
Let $n=|V(G)|$. By our assumption, $e(G_1)\geq (p+1)n/2$ and that there is an edge $xy$ of color $2$.
Let $G'_1=G_1-x$. Then $e(G'_1)>(p-1)n/2$. By Theorem \ref{EG-theorem}, $G'_1$ contains a cycle $C$ of
length at least $p$. Since $G$ is connected, there exist paths in $G$ from $\{x,y\}$ to $V(C)$.
Among them, let $P$ be a shortest one. Let $z$ be the unique vertex in $V(C)\cap V(P)$.
Let $z'$ be a neighbor of $z$ on $C$.
If $P$ has none of edges of color 2,
then $(P\cup C\cup \{xy\})\setminus \{zz'\}$ satisfies the requirement.
So we may assume that $P$ has an edge of color $2$. In this case, let $P'$ be a shortest
subpath on $P$ containing $z$ and a color $2$ edge. Then $(P'\cup C)\setminus \{zz'\}$
satisfies the requirement.
\end{proof}

We next give a hypergraph extension of Lemma \ref{special-paths}. Even though the objects we study are
Berge paths and Berge cycles, it appears that developing variants of Lemma \ref{special-paths} for linear paths
would facilitate the arguments better and may be handy for future study on linear cycles.

\begin{lemma} \label{special-paths-hyper} {\bf (The special-path lemma)}
Let $r,p\geq 2$. Let $\CH$ be a connected linear $r$-graph whose edges are colored with $1$ and $2$
such that there is at least one edge of each color. For $i\in [2]$, let $\CH_i$ denote the
subhypergraph of $\CH$ consisting of edges of color $i$. Suppose $d(\CH_1)\geq r(r-1)(p-1)+2r$.
Then there exists a linear path of length at least $p$
such that the first edge has color $2$ and the other edges all have color $1$.
\end{lemma}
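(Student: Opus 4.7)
The plan is to mimic the proof of Lemma~\ref{special-paths} in the hypergraph setting, using the linearity of $\CH$ in place of graph-edge distinctness. First I would apply Lemma~\ref{prop:ave->min} to $\CH_1$ to pass to a subhypergraph $\CH_1^*\subseteq\CH_1$ with $\delta(\CH_1^*)\ge d(\CH_1)/r\ge (r-1)(p-1)+2$. Then, fixing any color-$2$ edge $e_0\in\CH_2$ and a vertex $x_0\in V(e_0)$, I would form $\CH_1':=\CH_1^*\setminus\{x_0\}$. Since $\CH_1^*$ is linear, each remaining vertex loses at most one edge through $x_0$, so $\delta(\CH_1')\ge (r-1)(p-1)+1$.

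The analogue of the Erd\H{o}s--Gallai cycle step is a direct closing argument inside $\CH_1'$. Take a maximum linear path $\CQ=(f_1,\ldots,f_L)$ in $\CH_1'$ and an endpoint $v\in f_L\setminus f_{L-1}$. By maximality, every edge of $\CH_1'$ through $v$ other than $f_L$ meets $V(\CQ)\setminus\{v\}$, and linearity of $\CH$ says each such ``second vertex'' is used at most once, giving $L(r-1)\ge\delta(\CH_1')-1\ge(r-1)(p-1)$, i.e.\ $L\ge p-1$. A second count, bounding how many of these closing edges can meet only the top $p-3$ edges $f_{L-p+3},\ldots,f_{L-1}$ of $\CQ$, produces an edge $f$ through $v$ whose largest index of intersection with $V(\CQ)\setminus\{v\}$ is at most $L-p+2$; the cycle $(f,f_{j^*},f_{j^*+1},\ldots,f_L)$ is then linear and of length at least $p$. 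Call it $\CC$; by construction $x_0\notin V(\CC)$.

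Since $\CH$ is connected and $\{x_0\}$, $V(\CC)$ are disjoint, Lemma~\ref{linear-uv-path} yields a linear $\{x_0\}$--$V(\CC)$ path $\CP=(p_1,\ldots,p_k)$ in $\CH$, arising from a shortest shadow path, with $x_0\in p_1$ and some $z\in p_k\cap V(\CC)$. Pick an edge of $\CC$ incident to $z$ whose removal turns $\CC$ into a linear path $\CC'$ of length $|\CC|-1$ having $z$ as an endpoint. Following the color case analysis of Lemma~\ref{special-paths}, if no $p_i$ is of color~$2$, prepend $e_0$ to obtain the linear path $(e_0,p_1,\ldots,p_k,\CC')$ of length $k+|\CC|\ge p+1$, with first edge of color~$2$ and the rest of color~$1$; if some $p_j$ is of color~$2$, take the largest such $j$ and use $(p_j,p_{j+1},\ldots,p_k,\CC')$ of length at least $1+(p-1)=p$, again with the required color pattern.

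The main obstacle is verifying, in each case, that the concatenation is actually a linear path. Linearity of $\CH$ automatically bounds any two hyperedges' intersection by a single vertex, so the real checks are vertex-disjointness of the non-adjacent pairs: $V(e_0)\cap V(p_i)=\emptyset$ for $i\ge 2$, $V(e_0)\cap V(\CC')=\emptyset$, and $V(p_k)\cap V(\CC')\subseteq\{z\}$. The interior statements, in particular $V(p_i)\cap V(\CC)=\emptyset$ for $i<k$, follow from the shortest shadow-path property exactly as in the proof of Lemma~\ref{linear-uv-path}. The remaining boundary configurations---an extra vertex of $V(p_k)$ lying in $V(\CC)$, or a vertex of $V(e_0)\setminus\{x_0\}$ lying in $V(\CP)$ or $V(\CC)$---are handled by switching $z$ to that extra intersection, rechoosing which edge of $\CC$ is removed, or replacing $\CP$ with a shorter linear path that already incorporates $e_0$ as its initial edge before recombining with $\CC'$. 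These residual case checks constitute the main technical cost of the proof.
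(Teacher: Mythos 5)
Your plan mirrors the graph proof of Lemma~\ref{special-paths}, but the concatenation step does not go through cleanly for $r\geq 3$, and the ``residual case checks'' you flag are in fact a genuine gap rather than routine bookkeeping. The last edge $p_k$ of the shortest linear $\{x_0\}$--$V(\CC)$ path has $r$ vertices; minimality of the shadow path only forces the vertex $u_{m-1}$ preceding $z$ to lie outside $V(\CC)$, so up to $r-1$ vertices of $p_k$ may lie in $V(\CC)$, and likewise the $r-1$ vertices of $e_0\setminus\{x_0\}$ may lie in $V(\CP)\cup V(\CC)$, since you deleted only $x_0$ from $\CH_1^*$. Your proposed fix (``switching $z$,'' ``rechoosing which edge of $\CC$ is removed'') does not work in general: if $p_k$ meets $\CC$ at two vertices interior to far-apart edges of $\CC$, opening $\CC$ at an edge through one of them still leaves $p_k$ meeting $\CC'$ at the other as an interior vertex, and because your $\CC$ may have length exactly $p$, the longest arc of $\CC$ avoiding all but one intersection point may be shorter than $p-1$. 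In the graph case this obstruction simply does not exist, because the last edge of $P$ is a $2$-set whose non-$z$ endpoint is off $C$ by minimality; your lift does not inherit that feature.

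The paper's proof avoids the cycle-plus-connector construction altogether. It deletes \emph{all} $r-1$ non-$u$ vertices of the color-$2$ edge $h$ from $\CH_1$ at the outset (affordable because linearity bounds each vertex's degree by $\frac{n-1}{r-1}$, which is exactly why the hypothesis is $r(r-1)(p-1)+2r$), passes to $\CH''$ with $\delta(\CH'')\geq(r-1)(p-1)+2$, and then argues about \emph{good paths}: linear paths whose first edge has color $2$, whose remaining edges have color $1$, and whose last edge has an endpoint in $\CV(\CH'')$. Lemma~\ref{linear-uv-path} is used once, to show a good path exists; then one takes a \emph{longest} good path and, if its length is below $p$, extends it at its $\CH''$-endpoint $z$: since $\delta(\CH'')\geq(r-1)(p-1)+2$ and the path has at most $(r-1)(p-1)$ vertices besides $z$, linearity supplies a fresh $\CH''$-edge through $z$ disjoint from the rest, contradicting maximality. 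This one-sided greedy extension never splices a path onto a cycle and never has stray vertices of $e_0$ to account for, which is precisely where your version runs into trouble. Your Erd\H{o}s--Gallai-style closing argument for producing a long linear cycle in a linear $r$-graph of high minimum degree is itself correct and potentially useful, but it is not what makes the lemma go through, and as it stands the proof is incomplete.
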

\begin{proof} Let $n=|\CV(\CH)|$.
Let $h$ be an edge of color $2$. Let $u$ be a vertex in $h$. Let $\CH'$ be obtained from $\CH_1$ by deleting the $r-1$ vertices
in $h$ besides $u$. Since $\CH_1$ is linear, we lose at most $\frac{n-1}{r-1}$ edges by deleting any vertex. Since $|\CH_1|\geq (r-1)(p-1)n+2n$,
certainly we have $|\CH'|>(r-1)(p-1)n+n$.
By Lemma \ref{prop:ave->min}, $\CH'$ contains a subhypergraph  $\CH''$ with minimum degree at least $(r-1)(p-1)+2$.
Call a linear path $\CP$ in $\CH$ {\it good} if its first edge has color $2$ and the other edges have
color $1$ and if the last edge contains an endpoint lying in $\CV(\CH'')$.

First we show that there exists at least one good path in $\CH$. Let $\CV(h)$ denote the set of vertices in $h$.
Since $\CH$ is connected, by Lemma \ref{linear-uv-path}, there exist linear paths in $\CH$ from $\CV(h)$ to $\CV(\CH'')$. Among these paths
let $\CP$ be a shortest one.  Let $e$ denote the unique edge of $\CP$ intersecting $\CV(h)$ and view it
as the first edge of $\CP$ and let $f$ denote the unique edge of $\CP$ intersecting $\CV(\CH'')$. It is possible that $e=f$.
Let $v$ be a vertex in $f\cap \CV(\CH'')$. By our choice of $\CP$, $v$ has degree $1$ in $\CP$.
First suppose that all edges on $\CP$ have color $1$. By our choice of $\CP$, $\CQ=\CP\cup h$ is  a linear path
that starts with a color $2$ edge (namely $h$) but have all color $1$ edges otherwise and one of the endpoints of the last edge
is in $\CV(\CH'')$.  Hence $\CQ$ is a good path.
Next, suppose $\CP$ contains at least one color $2$ edge. In this case, let $\CQ$ be the shortest linear-path contained in $\CP$
that contains a color $2$ edge and the edge $f$. Then $\CQ$ is a good path.

Now, among all good paths in $\CH$, let $\CQ^*$ be a longest one. Let $e,f$ denote the first and last edges, respectively.
By our assumption, $e$ is the only edge on $\CQ^*$ with color $2$ and $f$ contains an endpoint $z$ in $\CV(\CH'')$.
Suppose first $\CQ^*$ has length at most $p-1$. Since $\delta(\CH'')\geq (r-1)(p-1)+2$,
there are at least $(r-1)(p-1)+1$ edges of $\CH''$ besides $f$ that contain $z$.
Since there are at most $(r-1)(p-1)$ vertices in $\CV(\CQ^*)\setminus \{z\}$ and $\CH$ is linear, one of these edges $e'$ is disjoint
from $\CV(\CQ^*)\setminus \{z\}$. But then $\CQ^*\cup \{e'\}$ is a good path in $\CH$ that is longer than $\CQ^*$, a contradiction.
Hence $\CQ^*$ must have length at least $p$ and it  is a linear path that satisfies the claim.
\end{proof}


\subsection{Maximal extendable skeletons}

We now define a generalization of the breadth-first search tree for $3$-graphs,
which was first introduced by Gy\H{o}ri and Lemons in \cite{GL-3uniform}.
Let $\CH$ be a $3$-graph and $x$ a vertex of $\CH$. A {\it maximal extendable skeleton} of $\CH$
rooted at $x$ is an extendable subgraph $T\subseteq \partial \CH$,
obtained by running the following algorithm till its termination.

\begin{algorithm} \label{BFS} {\rm (Modified Breadth First Search)\\
{\it Input: }A $3$-graph $\CH$ and a vertex $x\in V(\CH)$.\\
{\it Output:} A tree $T$ rooted at $x$ and an extension  $\psi: E(T)\to \CE(\CH)$.\\
\\
{\it Initiation:} Set $T=\emptyset$, $Q=\{x\}$. Label all edges of $\CH$ as unprocessed. \\
{\it Iteration:} Maintain $Q$ as a queue.
Let $u$ be the first vertex in $Q$. Consider any unprocessed edge $e=uvw$ containing $u$.
If at least one of $v,w$ is not in $V(T)$, then pick one, say $v$.
Add edge $uv$ to $T$ and add $v$ to the end of $Q$. Let $\psi(uv)=e$ and  mark $e$ as processed and delete $e$ from $\CH$.
If both of $v,w$ are already in $V(T)$, then skip edge $e$ and mark it as processed. When all edges
containing $u$ have been processed, we delete $u$ from $Q$.\\
{\it Termination:} Terminate when $Q=\emptyset$. \qed
}
\end{algorithm}

Clearly, the graph $T$ we obtain from Algorithm \ref{BFS} is a tree rooted at $x$, in which the
distance from a vertex $y$ to the root $x$ is non-decreasing in the order in which $y$ is being added to $T$
(or equivalently to the queue $Q$).
We point out that such tree $T$ may not be spanning in $\partial \CH$.

Given a $3$-graph $\CH$ and a maximal extendable skeleton $T$ in $\CH$ rooted at some vertex $x$
together with an extension $\psi$ of it, let
\begin{equation} \label{ET}
\CE_{T,\psi}=\{\psi(e): e\in E(T)\}
\end{equation}
and
\begin{equation}\label{ET-prime}
\CE'_{T,\psi}=\{e\in \CE(\CH)\setminus \CE_{T,\psi}:  e\cap V(T)\neq \emptyset\}.
\end{equation}

Note  that if $\CH$ is a linear $3$-graph, then each edge in $\partial \CH$ lies in a unique edge of $\CH$.
So an extension $\psi$ of the  a maximal extendable skeleton $T$ in a linear $3$-graph $\CH$ is uniquely determined by $T$.
In this case, we will simply write $\CE_T$ and $\CE'_T$ for $\CE_{T,\psi}$ and $\CE'_{T,\psi}$, respectively.

Given two vertices $u,v\in V(T)$, let $d_T(u,v)$ denote the length of the unique $(u,v)$-path in $T$.
For each integer $i\geq 0$, let
\begin{equation} \label{levels}
L_i(T)=\{y\in V(T): d_T(x,y)=i\}.
\end{equation}
When the context is clear, we will simply write $L_i$ for $L_i(T)$. We call $L_i$ {\it level $i$} of $T$.

\begin{proposition} \label{level-containment}
Let $\CH$ be a $3$-graph and $x\in V(\CH)$. Let $T$ be a maximal extendable skeleton in $\CH$
rooted at $x$ together with an extension $\psi$.
For every $e\in \CE'_{T,\psi}$, there exists some $i\geq 0$ such that $e\subseteq L_i(T)\cup L_{i+1}(T)$.
\end{proposition}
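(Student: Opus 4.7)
The plan is to exploit the BFS layer structure built by Algorithm \ref{BFS} and pin down the instant at which $e$ gets skipped. Fix $e=\{a,b,c\}\in \CE'_{T,\psi}$. Since $e$ meets $V(T)$, let $v^{*}$ be whichever endpoint of $e$ enters $V(T)$ (equivalently, is inserted into the queue $Q$) first during the execution, and set $i=d_T(x,v^{*})$, so $v^{*}\in L_i(T)$. The aim is to show that all three of $a,b,c$ lie in $L_i(T)\cup L_{i+1}(T)$.

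First I would argue that $v^{*}$ is the vertex that processes $e$, and that the two remaining vertices of $e$ already belong to $V(T)$ at that moment. Because $Q$ is FIFO and $v^{*}$ is the first endpoint of $e$ enqueued, $v^{*}$ is also the first endpoint of $e$ dequeued; hence when $v^{*}$ scans its incident edges, $e$ is still unprocessed. Since $e\notin \CE_{T,\psi}$, the iteration step of Algorithm \ref{BFS} must have caused $v^{*}$ to skip $e$, and by the description of the algorithm this happens only when both of the other two vertices of $e$ already lie in $V(T)$. Consequently $\{a,b,c\}\subseteq V(T)$.

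Next I would bound the levels of the two remaining endpoints by invoking the standard BFS invariant (which I would verify by a short induction on the iteration count): at the moment $v^{*}$ is dequeued, the set $V(T)$ consists of every vertex at levels $0,1,\ldots,i$ together with possibly some vertices at level $i+1$, but no vertex at level $\geq i+2$. The reason is that each tree edge added while processing a level-$j$ vertex points to a new vertex at level $j+1$, and when $v^{*}$ is dequeued the algorithm has just finished all level-$(i-1)$ processings and is partway through the level-$i$ processings. On the other hand, vertices enter $V(T)$ in non-decreasing order of their eventual tree level, so the minimality of $v^{*}$ among the endpoints of $e$ forces the other two endpoints to have level $\geq i$. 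Combining these two bounds yields $e=\{a,b,c\}\subseteq L_i(T)\cup L_{i+1}(T)$, which is the desired conclusion.

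The only step that really requires care is the ``frontier spans two consecutive levels'' invariant in the third paragraph; everything else is direct bookkeeping about the control flow of Algorithm \ref{BFS}. Once this standard property is stated cleanly for the modified BFS described there, the proposition follows at once by setting $i=d_T(x,v^{*})$.
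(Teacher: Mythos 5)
Your proof is correct and follows essentially the same route as the paper's: fix the earliest-discovered vertex $v^*$ of $e$, observe that when $v^*$ is processed the other two vertices must already lie in $V(T)$ (else $e$ would have become a tree edge), and then bound their levels from below by the non-decreasing-level ordering of BFS and from above by the observation that only levels up to $i+1$ can have been discovered by that point. The paper phrases the upper bound directly as ``$v$ was added while processing some $u'$ dequeued no later than $u$, so $d_T(x,v)=d_T(x,u')+1\le i+1$,'' whereas you package the same fact as a frontier invariant; these are interchangeable, and the extra part of your invariant (that \emph{all} level-$\le i$ vertices are already present) is true but unused.
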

\begin{proof}
We already observed that for any $y,y'\in V(T)$, if $y$ is added to $T$ before $y'$ then
$d_T(x,y)\leq d_T(x,y')$.
Among all vertices in $e\cap V(T)$, let $u$ be the vertex that is added to $T$ the earliest.
Let $i=d_T(x,u)$. Then $u\in L_i(T)$. Suppose $e=uvw$. Consider the time the algorithm
processed $e$. If one of $v$ or $w$, say $v$, was not in $T$ at that time,
then the algorithm would have added $uv$ to $T$, deleted $e$, and let $\psi(uv)=e$,
contradicting $e\in \CE'_{T,\psi}$. Hence, $v,w$ were both in $V(T)$ at that time.
By our earlier discussion, $d_T(x,v),d_T(x,w)\geq i$. Since $v$ was already in $T$, it
must have been added to $T$ when we processed an edge $e'$ containing a vertex $u'$
where $u'$ was either $u$ itself or an earlier vertex in the queue $Q$. Hence $d_T(x,v)=d_T(x,u')+1\leq i+1$.
Similarly, $d_T(x,w)\leq i+1$.
\end{proof}

\section{Consecutive cycles in linear $r$-graphs}
We devote this section to the proof of Theorem \ref{Thm:Li3Berge}.
The next two lemmas are crucial and their proofs contain the main ideas of our method used in this paper.

\begin{lemma} \label{twice-radius-A}
Let $\CH$ be a linear $3$-graph and $T$ a maximal extendable skeleton in $\CH$ rooted
at some vertex $r$. For each $i\geq 1$, let $L_i=\{v: d_T(r,v)=i\}$ and $\CA_i=\{e\in \CH: |e\cap L_i|=2, |e\cap L_{i-1}|=1\}$.
If there exists an $i\geq 1$ such that $|\CA_i|\geq (k+2)|L_i|$, then $\CH$ contains Berge cycles of
all lengths in $[a,a+k-1]$ for some $a\leq 2i$.
\end{lemma}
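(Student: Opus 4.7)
My plan is to extract a dense graph on $L_i$ from $\CA_i$, then combine it with the tree $T$ to build Berge cycles of consecutive lengths.

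\textbf{Step 1 (Auxiliary 2-graph):} Define a graph $G$ on vertex set $L_i$ whose edges are pairs $\{u,v\}$ such that some $e=\{u,v,w\}\in\CA_i$ has $w\in L_{i-1}$. Since $\CH$ is linear, any two distinct hyperedges meet in at most one vertex, so the map $e\mapsto \{u,v\}$ from $\CA_i$ into $E(G)$ is injective. Consequently $e(G)=|\CA_i|\ge (k+2)|L_i|$, and $G$ has average degree at least $2(k+2)$. By Lemma \ref{prop:ave->min} (applied to the graph case), $G$ contains a subgraph $H$ on some $U\subseteq L_i$ with $\delta(H)\ge k+2$. Every edge of $H$ is extendable in $\CH$ via a distinct hyperedge from $\CA_i$.

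\textbf{Step 2 (Combine $H$ with tree $T$):} For every $v\in U\subseteq L_i$ the unique tree path $P_v$ from $r$ to $v$ in $T$ has length $i$ and is extendable via $\psi$ to a Berge path of length $i$ in $\CH$. Thus any path in $H$ of length $\ell$ between $u,v\in U$ can be extended at both endpoints by tree paths to produce a candidate closed walk whose Berge-cycle length has the form $\ell + (2i-2j)$, where $j$ is the depth of the least common ancestor of $u,v$ in $T$; by exiting the relevant $\CA_i$ hyperedge through its $L_{i-1}$ vertex instead of its $L_i$ vertex, the same walk can be shifted by exactly $\pm 1$ in length, yielding cycles of both parities. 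The shortest such cycle uses tree contribution at most $2i$, which is where the bound $a\le 2i$ in the conclusion comes from.

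\textbf{Step 3 (Produce $k$ consecutive lengths):} To manufacture $k$ consecutive cycle lengths, I would apply the special-path lemma (Lemma \ref{special-paths}, or its hypergraph version Lemma \ref{special-paths-hyper}) to an auxiliary $2$-edge-coloured graph built on $V(T)\cup U$: color-$1$ edges are those of $H$, and color-$2$ is one carefully chosen tree edge incident to $U$ that forces the "entry" of the special path into $U$ at a specific vertex. The density $\delta(H)\ge k+2$ provides the hypothesis $d(G_1)\ge p+1$ needed to obtain a monochromatic-color-$1$ path of length at least $k+1$ starting from that entry vertex. Peeling off this path one edge at a time and closing each prefix through $T$ and the first $\CA_i$ hyperedge (using its two different "exit pairs" to fix the parity) produces $k$ Berge cycles whose lengths form a consecutive block $a,a+1,\ldots,a+k-1$ with $a\le 2i$.

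\textbf{Main obstacle:} The delicate point is not finding enough Berge cycles but ensuring the $k$ chosen lengths are \emph{truly consecutive} and that each cycle uses pairwise distinct hyperedges. Since $\CA_i$ may overlap $\CE_T$ (when a tree edge happens to be extended by an $\CA_i$ hyperedge), I would have to track carefully which $\CA_i$ hyperedges play a structural role in $T$ versus in $H$, and exploit the freedom offered by each $\CA_i$ hyperedge having three distinct "Berge-edge" realizations ($uv$, $uw$, $vw$) to shift cycle lengths by $1$ without introducing hyperedge repetitions; this is exactly the role played by the special-path lemma, which replaces the classical Bondy--Simonovits $(A,B)$-path lemma in this hypergraph setting.
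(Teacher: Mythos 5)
Your Step~1 is essentially the paper's setup: the auxiliary graph $G$ on $L_i$ obtained by projecting $\CA_i$ onto $L_i$ is exactly what they use (the paper then takes a connected component of average degree $\ge 2k+4$ rather than extracting $\delta\ge k+2$, but that is a minor difference). Steps~2 and~3, however, have a genuine gap: your proposed coloring and auxiliary graph are not the right ones, and without the correct coloring the argument does not deliver $k$ truly consecutive lengths with $a\le 2i$.

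The missing idea is this. Each edge $uv$ of the auxiliary graph on $L_i$ has a unique ``cover'' vertex $w\in L_{i-1}$ with $uvw\in\CA_i$; let $S\subseteq L_{i-1}$ be the set of all such cover vertices. The crucial step is to take $r^*$ to be the \emph{closest common ancestor} of $S$ in $T$, pick one child $s_1$ of $r^*$, and color each edge $uv$ of $G$ by whether its cover vertex $w\in S$ is a descendant of $s_1$ or not. Both colors occur by minimality of $r^*$, so Lemma~\ref{special-paths} (applied to a component of $G$ on $L_i$ only, not to a graph on $V(T)\cup U$) yields a path $uv_1\cdots v_{k+1}$ whose first edge has one color and all others the other color. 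The fixed cover vertex $x$ of $uv_1$ and the cover vertices $y_\ell$ of $v_\ell v_{\ell+1}$ then lie in \emph{opposite} subtrees below $r^*$, so the two tree paths $X$ (from $r^*$ to $x$) and $Y_\ell$ (from $r^*$ to $y_\ell$) are internally disjoint and have total length exactly $2(i-1-i')$ for \emph{every} $\ell$, where $r^*\in L_{i'}$. Gluing $X\cup Y_\ell$ to the extendable path $xv_1\cdots v_\ell y_\ell$ of length $\ell+1$ (edges extending into distinct $\CA_i$-hyperedges, disjoint from $\CE_T$) gives Berge cycles of lengths $2(i-i')-1+\ell$ for $\ell\in[k]$, i.e.\ a block $[a,a+k-1]$ with $a=2(i-i')\le 2i$.

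Your version breaks in two concrete places. First, coloring all of $H$ color~1 and a single tree edge color~2 does not force the two ``exit'' cover vertices into different subtrees of a common ancestor, so there is no guarantee the two tree paths used to close a prefix are internally disjoint, nor that their combined length is independent of $\ell$; without that, the cycle lengths need not be consecutive and the bound $a\le 2i$ is not secured. Second, building the auxiliary graph on $V(T)\cup U$ wrecks the density hypothesis of Lemma~\ref{special-paths}: $d(G_1)$ there is normalized by $|V(G)|$, which now includes all of $V(T)$, so $\delta(H)\ge k+2$ no longer gives $d(G_1)\ge k+2$. The special-path lemma must be applied entirely inside $L_i$, with the two colors carrying structural information about $L_{i-1}$, and your ``exit through the $L_{i-1}$ vertex to shift by $\pm1$'' intuition should be replaced by the fixed length accounting via $r^*$ above.
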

\begin{proof}
Fix an $i\geq 1$ that satisfies $|\CA_i|\geq (k+2)|L_i|$. Since $\CH$ is linear, the condition implies that $i\geq 2$.
Let $G_i=\{e\cap L_i: e\in \CA_i\}$. Since $\CH$ is linear, there is bijection between $\CA_i$ and $G_i$.
So, $G_i$ is a $2$-graph on $L_i$ with $|G_i|=|\CA_i|\geq (k+2)|L_i|$.
Hence, $d(G_i)\geq 2k+4$. Let $G$ be a connected component of $G_i$ with $d(G)\geq 2k+4$.
By our assumption, each $uv\in E(G)$ has a unique co-neighbor $w$ in $\CA_i$ and $w\in L_{i-1}$.
Let $S$ be the set of co-neighbors  of $uv$ over all edges $uv\in E(G)$.
Note that the condition $d(G)\geq 2k+4$ implies that $|S|\geq 2$. Indeed, the link of any single vertex can only be a matching.

Let $r^*$ be a closest common ancestor in $T$ of vertices in $S$ and let $T^*$ be unique subtree of $T$
rooted at $r^*$ whose set of leaves is $S$. Suppose $r^*\in L_{i'}$.
Let $s_1,\dots, s_m$ denote the children of $r^*$ in $T^*$.
Color all the descendants of $s_1$ in $T^*$ that lie in $S$ color $1$ and the other vertices in $S$ color $2$.
Call this coloring $c$. By our choice of $r^*$ and the fact that $|S|\geq 2$, we have $m\geq 2$. So both color $1$
and color $2$ appear in $S$. Now, define an edge-coloring $\phi$ of $G$ as follows.
For each $uv\in E(G)$, let  $w$ be the unique vertex in $S$ such that $uvw\in \CA_i$. Let $\phi(uv)=c(w)$.
This defines a $2$-coloring of the edges of $G$ in which there is at least one edge of each color.
For $i\in [2]$, let $G_i$ denote the subgraph of $G$ consisting of edges of color $i$.
We may assume that $e(G_1)\geq e(G_2)$ since the arguments for  the $e(G_2)\geq e(G_1)$ case are identical.
By our assumption, $d(G_1)\geq k+2$. Since $G$ is connected, by Lemma \ref{special-paths},
there exists a path $P$ of length $k+1$ in $G$ such that first edge has color $2$ and all the others have color $1$.
Suppose $P=uv_1v_2\dots v_{k+1}$, where $uv_1$ is the only edge of color $2$. Let $x$ be the unique vertex in $S$
such that $uv_1x\in \CA_i$. Let $X$ denote the unique path in $T^*$ from $r^*$ to $x$.
For each $\ell\in [k]$, let $y_\ell$ be the unique vertex in $S$ such that $v_\ell v_{\ell+1} y_\ell\in \CA_i$.
Let $Y_\ell$ denote the unique path in $T^*$ from $r^*$ to $y_\ell$.

Now, $P_\ell=xv_1\dots v_\ell y_\ell$ is an extendable path of length $\ell+1$ whose edges extend to different hyperedges of
$\CA_i\subseteq \CE'_T$. Clearly $X\cup Y_\ell$ is an extendable path of length $2(i-1-i')$ whose edges extend
to different hyperedges in $\CE_T$. Further, $V(X\cup Y_\ell)\cap V(P_\ell)=\{x,y_\ell\}$. Hence,
$P_\ell\cup X\cup Y_\ell$ is an extendable cycle in $\partial(\CH)$ of length $2(i-1-i')+\ell+1$, where $\ell\in [k]$.
So $\CH$ contains a Berge cycle of each length in $[a, a+k-1]$, where $a=2(i-i')\leq 2i$.
\end{proof}

\begin{lemma} \label{twice-radius-BC}
Let $\CH$ be a linear $3$-graph and $T$ a maximal extendable skeleton in $\CH$ rooted at some vertex $r$.
For each $i\geq 1$, let $L_i=\{v: d_T(r,v)=i\}$, $\CB_i=\{e\in \CH: e\subseteq L_i\}$, and
$\CC_i=\{e\in \CH: |e\cap L_i|=2, |e\cap L_{i+1}|=1\}$. For each $i\geq 1$,
if $\CH$ contains no Berge cycles of all lengths in $[a,a+k-1]$ for any $a\leq 2i+2$,
then
 $|\CB_i\cup \CC_i|\leq (\frac{7k}{2}+1)|L_i|+(\frac{5k}{2}+2)|L_{i+1}|$.
\end{lemma}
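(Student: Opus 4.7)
The plan is to prove the contrapositive: assume $|\CB_i \cup \CC_i| > (\tfrac{7k}{2}+1)|L_i|+(\tfrac{5k}{2}+2)|L_{i+1}|$ and derive Berge cycles of $k$ consecutive lengths in $\CH$ with shortest length at most $2i+2$, contradicting the hypothesis. The proof adapts the blueprint of Lemma~\ref{twice-radius-A}: build an auxiliary shadow $2$-graph $G$ encoding the pair-to-hyperedge correspondence of $\CB_i\cup\CC_i$, pass to a high-density connected subgraph $G'$, introduce a $2$-coloring of $E(G')$ via the positions of ``third vertices'' in the tree $T$, apply Lemma~\ref{special-paths} to $G'$ to extract a special path $P$, and splice $P$ with tree paths to form the desired cycles.

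Concretely, let $G$ be the $2$-graph on $L_i\cup L_{i+1}$ whose edges are all $2$-subsets of hyperedges in $\CB_i\cup \CC_i$. By linearity of $\CH$ these pairs are disjoint across hyperedges, so $|E(G)|=3(|\CB_i|+|\CC_i|)$. Each edge $uv\in E(G)$ has a unique \emph{third vertex} $z(uv)$, namely the vertex completing the hyperedge of $\CH$ that contains $\{u,v\}$. A key observation is that $z(uv)\in L_i$ for every pair coming from a $\CB_i$-hyperedge and for each crossing pair of a $\CC_i$-hyperedge, while $z(uv)\in L_{i+1}$ only for the in-$L_i$ pair of a $\CC_i$-hyperedge. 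Lemma~\ref{prop:ave->min} and restriction to a connected component then yield a connected $G'\subseteq G$ of minimum degree proportional to $k$, and the set $S:=\{z(uv):uv\in E(G')\}$ satisfies $|S|\ge 2$.

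Let $r^*\in L_{i'}$ be the LCA in $T$ of the vertices in $S$, partition $S$ into two nonempty subsets by the principal subtree of $r^*$, and pull this partition back to a $2$-coloring of $E(G')$ via $z(\cdot)$. Lemma~\ref{special-paths} then produces a path $P=u_0 u_1\cdots u_{k+1}$ in $G'$ whose first edge has color $2$ and whose remaining edges have color $1$. For each $\ell\in[k]$, the initial segment $u_0 u_1\cdots u_\ell$, joined with the tree paths from $r^*$ to $z(u_0 u_1)$ and to $z(u_{\ell-1}u_\ell)$ (which, by the coloring, lie in distinct principal subtrees of $r^*$ and are hence internally vertex-disjoint), forms an extendable cycle in $\partial\CH$. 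Since consecutive subpaths differ by exactly one edge, the $k$ cycles so produced have $k$ consecutive lengths; the shortest is bounded by $\ell+1+2(i+1-i')\le 2i+2$, after accounting for the levels of the two third vertices involved.

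The main obstacle is the dual location of third vertices: $z(uv)$ may lie in $L_i$ or in $L_{i+1}$, so the two tree-path contributions can differ by $0$, $1$, or $2$, which both threatens the consecutiveness of cycle lengths and may push the minimum length beyond $2i+2$ when $i'=0$ and both third vertices happen to lie in $L_{i+1}$. Handling this properly likely requires splitting into cases based on whether $|\CB_i|$ or $|\CC_i|$ dominates the sum, with the $2$-coloring chosen in each case so that every third vertex along $P$ lies at a fixed level; this is the source of the asymmetric coefficients $\tfrac{7k}{2}+1$ and $\tfrac{5k}{2}+2$, since a vertex of $L_i$ contributes to $\CB_i$-pairs (with multiplicity $3$) as well as to $\CC_i$-pairs (with multiplicity $2$), whereas a vertex of $L_{i+1}$ contributes only to $\CC_i$-pairs (with multiplicity $1$). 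A secondary technical issue is that two edges of $G$ coming from the same $\CB_i$-hyperedge form a triangle in $G$, so the spliced cycle must be verified to use distinct hyperedges; this is ensured by the coloring construction, which assigns the three pairs of a single $\CB_i$-hyperedge potentially different colors via their different third vertices, together with the fact that $P$ has length $k+1\ge 3$.
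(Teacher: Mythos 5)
Your proposal attempts to transplant the shadow-graph-plus-edge-coloring mechanism of Lemma~\ref{twice-radius-A} verbatim onto $\CB_i\cup\CC_i$: build one auxiliary $2$-graph $G$ on $L_i\cup L_{i+1}$, color its edges by the position of the ``third vertex,'' and apply Lemma~\ref{special-paths}. The paper takes a structurally different route. It works with a connected component $\CG$ of the hypergraph $\CB_i\cup\CC_i$ itself, colors the \emph{vertices} of $T^*$ by the principal-subtree construction, and then partitions the \emph{hyperedges} of $\CG$ into monochromatic ($\CM$) and non-monochromatic ($\CN$). Claim~2 bounds $|\CM|$ via the \emph{hypergraph} special-path lemma, Lemma~\ref{special-paths-hyper} (a tool your proposal never invokes), producing a linear path whose non-initial vertices all have one color; Claim~3 further decomposes $\CN$ into $\CN_1,\CN_2,\CN_3$ according to whether two of the three vertices lie in $S_1$, in $S_2$, or one in each plus one in $L_{i+1}$, and handles each piece by a separate shadow-graph argument (Erd\H{o}s--Gallai for $\CN_1,\CN_2$, Lemma~\ref{prop:lin3par} for $\CN_3$). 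This decomposition is precisely what confines the ``bottom'' path of every spliced cycle to one color class and a controlled pair of levels, and is what makes the lengths both consecutive and bounded by $2i+2$.

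There is a genuine gap in your write-up, and you essentially flag it yourself: ``Handling this properly likely requires splitting into cases\ldots'' is where the mathematics must actually happen, and it is not done. Two separate difficulties are left unresolved. First, even if you restrict to shadow edges whose third vertex sits at a single level, the path $P=u_0u_1\cdots u_{k+1}$ produced by Lemma~\ref{special-paths} may still wander through both $L_i$ and $L_{i+1}$, because membership in a level is not reflected in the edge coloring by $z(\cdot)$. Second, and more importantly, the splicing in Claim~1 requires $V(X\cup Y_\ell)\cap V(P_\ell)=\{x,y_\ell\}$; the tree paths $X$ and $Y_\ell$ run from $r^*$ all the way down to level $i$ or $i{+}1$, so they can re-enter $V(P_\ell)$ at interior vertices. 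Your parenthetical (``which, by the coloring, lie in distinct principal subtrees of $r^*$ and are hence internally vertex-disjoint'') only establishes disjointness of $X$ from $Y_\ell$, not of either from $P_\ell$. The paper's argument rules this out by ensuring the intermediate vertices of $P_\ell$ all carry a fixed color while the relevant tree path carries the opposite color except at $r^*$ (see the level-and-color bookkeeping inside Claims~2 and~3); your single-graph, edge-colored framework does not set up any comparable vertex-level color separation, so the disjointness needed for the splicing simply is not available without substantially reworking the construction.

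\end{document}
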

\begin{proof}
Fix an $i$ for which $\CH$ does not contains Berge cycles of all lengths in $[a,a+k-1]$ for any $a\leq 2i+2$.
Let $\CG$ be a connected nontrivial component of $\CB_i\cup \CC_i$. Let $n_i=|\CV(\CG)\cap L_i|$
and $n_{i+1}=|\CV(\CG)\cap L_{i+1}|$. Our goal is to show $|\CG|\le (\frac{7k}{2}+1)n_i+(\frac{5k}{2}+2)n_{i+1}$.

Let $r^*$ denote the closest common ancestor of $\CV(\CG)$ in $T$. Suppose $r^*\in L_{i'}$. Let $T^*$
be the minimal subtree of $T$ rooted at $r^*$ which contains $\CV(\CG)$. Let $s_1,\dots, s_m$ denote
the children of $r^*$ in $T^*$. By our choice of $r^*$, $m\geq 2$. Now, color all descendants of
$s_1$ in $V(T^*)$ with color $1$ and the descendants of other $s_j$'s in $V(T^*)$ with color 2.
Call this coloring $c$. By the minimality of $T$, we see that $\CG$ has vertices of both colors.

\medskip

We now summarize one ingredient that we will repeatedly use into the following Claim.

\medskip

{\bf Claim 1.} Suppose $\partial \CG$ contains an extendable path $P$ of length $\ell$ with endpoints $x,y$
such that $c(x)\neq c(y)$ and $V(X\cup Y)\cap V(P)=\{x,y\}$, where $X,Y$ denote the unique paths in $T^*$ from
$r^*$ to $x,y$, respectively. Then $X\cup Y\cup P$ is an extendable cycle of length $(i(x)+i(y)-2i')+\ell$,
where $i(x), i(y)$ denote the levels $x,y$ are in, respectively.

\medskip

{\it Proof.} Since $c(x)\neq c(y)$, $V(X)\cap V(Y)=\{r^*\}$. Thus $C=X\cup Y\cup P$ is a cycle in $\partial \CH$ of
length $|X|+|Y|+\ell$. By our assumption different edges on $X\cup Y$ extend to different hyperedges of $\CE_T$,
while different edges on $P$ extend to different hyperedges of $\CE'_T$. Hence $C$ is an extendable cycle in
$\partial \CH$. Lastly, note that $|X|=i(x)-i'$ and $|Y|=i(y)-i'$.\qed

\medskip

For each edge $e\in \CG$, we call it {\it monochromatic}
if all three vertices in $e$ have the same color in $c$; otherwise we call it {\it non-monochromatic}.
Let $\CM$ denote the subgraph of $\CG$ consisting of monochromatic edges and $\CN$ the subgraph of
$\CG$ consisting of non-monochromatic edges. So $\CG=\CM\cup \CN$, and by our choice of $r^*$, $\CN\neq \emptyset$.

\medskip

{\bf Claim 2.} $|\CM|\leq (2k+2)(n_i+n_{i+1})$.

\medskip

{\it Proof.} Suppose that $|\CM|>(2k+2)(n_i+n_{i+1})$. We will derive a contradiction.
By Lemma \ref{special-paths-hyper}, there exists a linear path
$\CP$ in $\CG$ of length at least $k+1$ such that the first edge is in $\CN$ and all the other edges are in $\CM$.
Let the edges of $\CP$ be $e_1,e_2,\dots, e_{k+1}$ in order where $e_1$ is the edge in $\CN$. It is easy to see
that all the vertices in $\CV(\CP\setminus e_1)$ must have the same color under $c$. By renaming the colors in $c$
if necessary, we may assume that they have color $1$.
Also, among the two endpoints $x,x'$ of $\CP$ contained in $e_1$, either $c(x)\neq 1$ or $c(x')\neq 1$.
Without loss of generality, suppose that $c(x)=2$.  For each $\ell\in [k+1]$, let $\CP_\ell$ denote the linear path consisting of  $e_1,\dots, e_\ell$.
By our definition of $\CG$, $x\in L_i\cup L_{i+1}$.

First suppose that $x\in L_{i+1}$.  Let $y_1$ be any vertex in $e_1$ that has color $1$ and lies in $L_i$ (note that such a vertex exists).
Let $\ell\in \{2,\dots, k\}$. By our discussion, all three vertices in $e_\ell$ have color $1$, and among the two endpoints in $e_{\ell}$ of $\CP_\ell$,
there exists one that lies in $L_i$, denote that vertex by $y_\ell$. Now $\CP_\ell$ is a linear $(x,y_\ell)$-path of length $\ell$ in $\CG$
whose vertices are contained in $L_i\cup L_{i+1}$. Let $P_\ell$ denote a spine of it with $x,y_\ell$ as endpoints.
Note that all vertices on $P_\ell$ except $x$ have color $1$ in $c$ and $c(x)=2$.
Let $X, Y_\ell$ denote the unique paths in $T^*$ from $r^*$ to $x$ and $y_\ell$,
respectively. By the definition of $Y_\ell$, $Y_\ell$ intersects $P_\ell$ only at $y_\ell$.
Also, $X$ contains a vertex each from $L_i$ and $L_{i+1}$,
both of which have color $2$ in $c$, but all vertices on $P_\ell$ other than $x$ have color $1$. Hence $X$ intersects $P_\ell$ only at $x$.
By Claim 1, $X\cup Y_\ell\cup P_\ell$ is an extendable cycle of length $(i-i')+(i+1-i')+\ell$ in $\partial \CH$.
Since this holds for each $\ell\in [k]$,  we get Berge cycles of all lengths in $[2(i-i')+2, 2(i-i')+k+1]$.
This contradicts our assumption that $\CH$ has no Berge cycles of $k$ consecutive lengths the shortest of which has
length at most $2i+2$.

Next, suppose $x\in L_i$. Let $\ell \in \{2,\dots, k+1\}$. By definition all the vertices in $e_\ell$ have color $1$ under $c$.
Among the two endpoints in $e_\ell$ of $\CP_\ell$,
one of them lies in $L_i$. Let $y_\ell$ denote such a vertex. By a similar argument as above, we can obtain a Berge cycle
of length $(i-i')+(i-i')+\ell$ for each $\ell \in \{2,\dots, k+1\}$. Hence $\CH$ has Berge cycles of all lengths in  $[2(i-i')+2, 2(i-i')+k+1]$.
Again, this contradicts our assumption that $\CH$ has no Berge cycles of $k$ consecutive lengths the shortest of which has
length at most $2i+2$. This proves Claim 2. \qed

\medskip

{\bf Claim 3.} $|\CN|\leq (\frac{3k}{2}-1)n_i+\frac{k}{2} n_{i+1}$.

\medskip

{\it Proof.} We further decompose $\CN$ as follows. Let $S_1,S_2$ denote the set of descendants of $r^*$ in $L_i$ with color $1$ and $2$, respectively.
Let
\begin{eqnarray*}
\CN_1&=&\{e\in \CN: |e\cap S_1|=2\}\\
\CN_2&=&\{e\in \CN: |e\cap S_2|=2\}\\
\CN_3&=&\{e\in \CN\cap \CC_i: |e\cap S_1|=|e\cap S_2|=1\}
\end{eqnarray*}
By definition, any $e\in \CN\cap \CB_i$ intersects both $S_1$ and $S_2$.
Thus we see that $\CN=\CN_1\cup \CN_2\cup \CN_3$.
Suppose first that $|\CN_1|> \frac{k-1}{2} n_i$. Let $G_1$ be formed by taking the pair of color $1$ vertices
from each hyperedge in $\CN_1$. Then $|G_1|=|\CN_1|>\frac{k-1}{2}n_i>\frac{k-1}{2} n(G_1)$. By Theorem \ref{EG-theorem}, $G_1$
contains a path $P$ of length at least $k$. Let $P=y_0y_1\dots y_k$. Let $xy_0y_1$ be the unique hyperedge in $\CN_1$ containing $y_0y_1$.
Then $c(x)=2$ and $x\in L_i\cup L_{i+1}$. Let $p\in \{0,1\}$ such that $p=1$ iff $x\in L_{i+1}$.
Let $X$ denote the unique path in $T^*$ from $r^*$ to $x$. For each $\ell\in [k]$, let $Y_\ell$ denote the unique path
in $T^*$ from $r^*$ to $y_\ell$. By the definition of $\CN_1$, the path $P_\ell=xy_1\dots y_\ell$ is an extendable path
of length $\ell$ in $\partial \CG$.  Also $V(X)\cap V(P_\ell)=\{x\}$ since $x$ is the only vertex with color $2$ on $P_\ell$.
By definition of $Y_\ell$ and $P_\ell$, $V(Y_\ell)\cap V(P_\ell)=\{y_\ell\}$. So, $V(X\cup Y_\ell)\cap V(P)=\{x,y_\ell\}$. By Claim 1,
$X\cup Y_\ell\cup P_\ell$ is an extendable cycle of length $2(i-i')+p+\ell$ in $\partial \CH$, where $p\in \{0,1\}$.
Since this holds for each $\ell\in [k]$, $\CH$ contains Berge cycles of $k$ consecutive lengths,
the shortest of which has length at most $2i+2$, a contradiction.
Hence, $|\CN_1|\leq \frac{k-1}{2}n_i$. By symmetry, we have $|\CN_2|\leq \frac{k-1}{2} n_i$.
Therefore,
\begin{equation} \label{N1-bound}
|\CN_1|+|\CN_2|\leq  (k-1)n_i
\end{equation}

Now, consider $\CN_3$. By definition, $\CN_3$ is $3$-partite whose three parts are contained in $S_1$, $S_2$, and $L_{i+1}$, respectively.
Suppose $d(\CN_3)>\frac{3k}{2}$. Then by Lemma \ref{prop:lin3par}, there exists $x\in S_1\cap \CV(\CN_3)$ such that for each
$\ell\in [k]$, there is an extendable path $P_\ell$ in $\partial \CN_3$ from $x$ to a vertex $y_\ell$ in $S_2\cap \CV(\CN_3)$. Let $X$ denote the
unique path in $T^*$ from $r^*$ to $x$ and for each $\ell\in [k]$ let $Y_\ell$ denote the unique path in $T^*$ from $r^*$ to $y_\ell$.
Like before, $X\cup Y_\ell\cup P_\ell$ is an extendable cycle of length $2(i-i')+\ell$ in $\partial\CH$. Thus $\CH$ contains Berge cycles of $k$
consecutive lengths, the shortest of which has length at most $2i$, a contradiction. Hence
\begin{equation}\label{N3-bound}
|\CN_3|\leq \frac{k}{2}(n_i+n_{i+1}).
\end{equation}
Claim 3 now follows from \eqref{N1-bound} and \eqref{N3-bound}.
\qed

\medskip

By Claims 2 and 3, $|\CG|\leq (\frac{7k}{2}+1)n_i+(\frac{5k}{2}+2) n_{i+1}$.
Since this holds for each component of $\CB_i\cup \CC_i$, it follows $|\CB_i\cup \CC_i|\leq (\frac{7k}{2}+1)|L_i|+(\frac{5k}{2}+2)|L_{i+1}|$. This proves Lemma \ref{twice-radius-BC}.
\end{proof}

\noindent {\bf Remark.}
In the proof of Claim 2 of Lemma \ref{twice-radius-BC}, one could also use Lemma \ref{special-paths} instead
of Lemma \ref{special-paths-hyper} and get slightly better constants.  However, there are more subtleties to address
if one were to use Lemma \ref{special-paths} and also we want to demonstrate the use of Lemma \ref{special-paths-hyper}
since the lemma will be useful in the study of linear cycles.

\begin{lemma}\label{twice-radius-removal}
Let $\CH$ be a  linear $3$-graph.
Let $T$ be any maximal extendable skeleton in $\CH$ rooted at some vertex $r$ and with height $h$.
If the number of hyperedges in $\CH$ that contain some vertex in $V(T)$ is at least
$7(k+1)|V(T)|$ then $\CH$ contains Berge cycles of $k$ consecutive lengths,
the shortest of which is at most $2h+2$.
\end{lemma}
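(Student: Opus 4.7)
The plan is a short counting argument by contradiction that assembles the two main lemmas already established. Suppose, for contradiction, that $\CH$ contains no Berge cycles of $k$ consecutive lengths whose shortest member has length at most $2h+2$. I will show that under this assumption the number of hyperedges of $\CH$ meeting $V(T)$ is strictly less than $7(k+1)|V(T)|$, contradicting the hypothesis of the lemma.

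First, partition the hyperedges of $\CH$ meeting $V(T)$ into the tree edges $\CE_T$ and the remaining edges $\CE'_T$. Since $T$ is a tree, $|\CE_T| = |V(T)|-1$. By Proposition \ref{level-containment}, every $e \in \CE'_T$ is contained in $L_i \cup L_{i+1}$ for some $i \geq 0$; looking at how the three vertices of $e$ split between these two adjacent levels shows that $e$ belongs to exactly one of $\CA_i$, $\CB_i$, $\CC_i$. The boundary classes $\CB_0$ and $\CC_0$ are empty because $|L_0|=1$, while $\CC_h, \CA_{h+1}, \CB_{h+1}$ are empty because $L_{h+1}=\emptyset$, so only the indices $i \in [1,h]$ contribute.

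Next I invoke the two main lemmas in their contrapositive form. For each $i \in [1,h]$, since $2i \leq 2h+2$, Lemma \ref{twice-radius-A} yields $|\CA_i| < (k+2)|L_i|$. For each $i \in [1,h]$, since $2i+2 \leq 2h+2$, Lemma \ref{twice-radius-BC} yields
\[
|\CB_i \cup \CC_i| \;\leq\; \bigl(\tfrac{7k}{2}+1\bigr)|L_i| + \bigl(\tfrac{5k}{2}+2\bigr)|L_{i+1}|.
\]
Summing over $i$ and using $\sum_{i \geq 0} |L_i| = |V(T)|$, I obtain $\sum_i |\CA_i| \leq (k+2)|V(T)|$ and $\sum_i |\CB_i \cup \CC_i| \leq (6k+3)|V(T)|$. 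Adding the bound $|\CE_T| \leq |V(T)|$ gives a grand total of at most $(7k+6)|V(T)| < 7(k+1)|V(T)|$, which is the desired contradiction.

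This proof requires no substantively new idea beyond packaging Lemmas \ref{twice-radius-A} and \ref{twice-radius-BC} together. The main obstacle is purely bookkeeping: one must verify that the classification of $\CE'_T$ into $\CA$, $\CB$, $\CC$ pieces is a genuine disjoint partition (which uses the fact that no edge of $\CE'_T$ can span three levels), handle the boundary levels $L_0$ and $L_h$ correctly (so that no class is missed or double-counted), and match the thresholds $2i$ and $2i+2$ appearing in the two lemmas against the overall bound $2h+2$ so that the contrapositives apply throughout the relevant range.
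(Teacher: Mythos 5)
Your proof is correct and follows essentially the same route as the paper: negate the conclusion, partition $\CE'_T$ into the $\CA_i$, $\CB_i$, $\CC_i$ classes via Proposition \ref{level-containment}, apply Lemmas \ref{twice-radius-A} and \ref{twice-radius-BC} in contrapositive form for each $i\in[h]$, and sum. The paper merely presents the two per-level bounds combined into a single coefficient $(\frac{9}{2}k+3)|L_i|+(\frac{5}{2}k+2)|L_{i+1}|$ before summing, whereas you sum the $\CA$ and $\CB\cup\CC$ contributions separately; the arithmetic and conclusion are the same.
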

\begin{proof}
Suppose $\CH$ does not contain such Berge cycles, we derive a contradiction.
Define $\CE_T$ and $\CE'_T$ as in \eqref{ET} and \eqref{ET-prime}.
For each $i\geq 0$, let $L_i=\{v\in V(T): d_T(r,v)=i\}$.
Let $\CA_i=\{e\in \CE(\CH): |e\cap L_i|=2, |L_{i-1}|=1\}$,
$\CB_i=\{e\in \CH: |e\cap L_i|=3\}$, and $\CC_i=\{e\in \CH: |e\cap L_i|=2, |e\cap L_{i+1}|=1\}$.
By Proposition \ref{level-containment}, $\CE'_T\subseteq \bigcup_{i=1}^h (\CA_i\cup \CB_i\cup \CC_i)$.
By Lemma \ref{twice-radius-A} and \ref{twice-radius-BC}, for each $i\in [h]$,
$|\CA_i\cup \CB_i\cup \CC_i|\leq (\frac{9}{2}k+3)|L_i|+(\frac{5}{2}k+2)|L_{i+1}|$.
Hence
$$|\CE'_T|\leq (\frac{9}{2}k+3)\sum_{i=1}^h |L_i| +(\frac{5}{2}k+2)\sum_{i=1}^{h-1}|L_{i+1}|
\leq (7k+5)|V(T)|.$$
On the other hand, $|\CE_T|= |V(T)|-1$. So the number of hyperedges in $\CH$ with at least one vertex in $V(T)$
is less than $7(k+1)|V(T)|$, a contradiction.
\end{proof}

\noindent{\bf Remark.}
Note that in Lemma \ref{twice-radius-removal}, we not only find Berge cycles of  $k$ consecutive lengths,
but also ensure that the shortest length is no more than twice the height of any maximal extendable skeleton.
This extra condition on the shortest length will be useful for studying the Tur\'an number of a Berge cycle of fixed length.
See Section \ref{turan} for detailed discussions.
If we do not impose any condition on the shortest length of our cycles, then we can both improve the
bounds and simplify the proofs of Lemma \ref{twice-radius-A} and Lemma \ref{twice-radius-BC}.

\medskip

We now prove the following theorem, which perhaps is the corner stone of this paper.

\begin{theorem} \label{theorem-twice-radius}
Let $\CH$ be a  linear $3$-graph with $d(\CH)\geq 21(k+1)$.
Then $\CH$ contains Berge cycles of $k$ consecutive lengths.
\end{theorem}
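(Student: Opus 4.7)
The plan is to combine Lemma~\ref{twice-radius-removal} with a short induction on $|V(\CH)|$. The base case is vacuous: in a linear $3$-graph on $n$ vertices we have $d(\CH)\le(n-1)/2$, so $d(\CH)\ge 21(k+1)$ fails whenever $n$ is below a fixed threshold (roughly $42(k+1)$), and in that regime the statement is trivially true.

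For the inductive step, first invoke Lemma~\ref{prop:ave->min} to pass to a subhypergraph $\CH'\subseteq\CH$ with $d(\CH')\ge d(\CH)\ge 21(k+1)$ and $\delta(\CH')\ge d(\CH)/3\ge 7(k+1)$. Pick any vertex $r\in V(\CH')$ and run Algorithm~\ref{BFS} on $\CH'$ from $r$ to obtain a maximal extendable skeleton $T$. The key observation, obtained by directly inspecting the algorithm, is that every hyperedge of $\CH'$ meeting $V(T)$ has at least two vertices in $V(T)$: each $u\in V(T)$ is eventually popped from the queue and has all of its incident hyperedges processed at that moment, and whenever such an incident hyperedge still has a vertex outside $V(T)$ the algorithm adds one such vertex to $V(T)$.

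Now set $B=V(\CH')\setminus V(T)$ and split into two cases. If the number of hyperedges of $\CH'$ meeting $V(T)$ is at least $7(k+1)|V(T)|$, then Lemma~\ref{twice-radius-removal} applied to $\CH'$ and $T$ immediately produces Berge cycles of $k$ consecutive lengths in $\CH'\subseteq\CH$. Otherwise, since every hyperedge of $\CH'$ either meets $V(T)$ or is entirely contained in $B$, we get
\begin{equation*}
|E(\CH'[B])|=|E(\CH')|-\#\{e\in E(\CH'):\, e\cap V(T)\ne\emptyset\}>7(k+1)|V(\CH')|-7(k+1)|V(T)|=7(k+1)|B|,
\end{equation*}
using $|E(\CH')|=d(\CH')|V(\CH')|/3\ge 7(k+1)|V(\CH')|$. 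Hence $d(\CH'[B])>21(k+1)$, and since $r\in V(T)$ we have $|B|<|V(\CH)|$, so the induction hypothesis applied to $\CH'[B]\subseteq\CH$ yields the desired Berge cycles.

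The main obstacle is precisely the gap between the $7(k+1)|V(T)|$ threshold of Lemma~\ref{twice-radius-removal} and what high minimum degree alone delivers: each hyperedge meeting $V(T)$ can be counted up to three times in $\sum_{u\in V(T)}d_{\CH'}(u)$, so a naive averaging gives only about a third of the required number of incident hyperedges, and one cannot expect $T$ to span $V(\CH')$ in general. The induction sidesteps this factor-three loss because the failure of the ``if'' branch forces the residual $\CH'[B]$ to be strictly denser than $\CH$, so the recursion must eventually terminate inside the ``if'' branch.
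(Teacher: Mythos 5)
Your proof is correct and takes essentially the same approach as the paper: both apply Lemma~\ref{twice-radius-removal} to a maximal extendable skeleton $T$ and observe that if that lemma's hypothesis fails, deleting $V(T)$ and its incident hyperedges preserves the density threshold $|\CE(\CH)|\ge 7(k+1)|\CV(\CH)|$, the paper running this as an iterative deletion and bounding the total number of hyperedges lost, while you phrase it as induction on $|V(\CH)|$. As a minor note, your appeal to Lemma~\ref{prop:ave->min} (to obtain $\delta(\CH')\ge 7(k+1)$) and your observation that every hyperedge meeting $V(T)$ has at least two vertices in $V(T)$ both play no role in the argument you actually give, and the residual $\CH'[B]$ need not be denser than the original $\CH$ --- only above the $21(k+1)$ threshold, which is all the induction requires.
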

\begin{proof}
By our assumption $|\CH|\geq 7(k+1)|\CV(\CH)|$.
Let $T$ be any maximal extendable skeleton in $\CH$. If $\CH$ does not
contain Berge cycles of $k$ consecutive lengths, then by Lemma \ref{twice-radius-removal},
there are fewer than $7(k+1)|V(T)|$ hyperedges in $\CH$ that contain some vertex in $V(T)$.
Let us delete $V(T)$ and hyperedges that contain vertices in $V(T)$. Denote the remaining
hypergraph by $\CH'$. We can repeat the process until we either find Berge cycles of $k$ consecutive
lengths or we run out of hyperedges. Since $|\CH|\geq 7(k+1)|\CV(\CH)|$ and we lose fewer than $7(k+1)$
hyperedges per vertex we delete, we never run out of hyperedges. So $\CH$ must contain Berge cycles of
$k$ consecutive lengths.
\end{proof}

Now we can derive Theorem \ref{Thm:Li3Berge} promptly from Theorem \ref{theorem-twice-radius}.

\medskip

{\noindent \bf Proof of Theorem \ref{Thm:Li3Berge}:}
Let $\CH$ be a linear $r$-graph with average degree at least $7(k+1)r$. Define $\CG$ to be the $3$-graph with $\CV(\CG)=\CV(\CH)$
by taking as hyperedges a $3$-subset from each edge $e$ of $\CH$. Since $\CH$ is linear, the $3$-subsets are all distinct.
So $|\CG|=|\CH|\geq 7(k+1)|\CV(\CH)|$. Also, $\CG$ is linear. By Theorem \ref{theorem-twice-radius},
$\CG$ contains Berge cycles of $k$ consecutive lengths. Since all edges of $\CG$ extend to distinct edges of $\CH$,
these Berge cycles in $\CG$ extend to Berge cycles of $k$ consecutive lengths in $\CH$.
\qed


\section{The proof of Theorem \ref{Thm:main}}

In this section we prove Theorem \ref{Thm:main}, by essentially reducing it to
Theorem \ref{theorem-twice-radius}. We start by providing some useful lemmas.
The following lemma will be important for our reduction. It may be viewed as a very special
case of the delta-system lemma, introduced by Deza, Erd\H{o}s, and Frankl \cite{DEF}.

\begin{lemma}\label{delta-system}
Any $r$-graph $\CH$ has either a subhypergraph $\CH'$ with $\delta_{r-1}(\CH')\ge k+1$,
or a subhypergraph $\CH''$ with $|\CH''|\geq |\CH|/k$ in which each hyperedge contains an $(r-1)$-set with co-degree $1$ in $\CH''$.
In particular, in the latter case, there is an extendable $(r-1)$-graph $\CG\subseteq \partial_{k-1}(\CH)$ with $|\CG|\ge |\CH|/k$.
\end{lemma}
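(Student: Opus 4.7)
My plan is to prove the lemma by induction on $|\CE(\CH)|$, with the empty hypergraph as a trivial base case. For the inductive step I would first check whether $\delta_{r-1}(\CH)\ge k+1$; if so, $\CH'=\CH$ fulfils the first alternative. Otherwise there is an $(r-1)$-set $S$ with $1\le d_{\CH}(S)\le k$, and I would set $B=\{e\in\CE(\CH):S\subseteq e\}$ (so $|B|\le k$), pick an arbitrary $e_0\in B$, and apply the inductive hypothesis to $\CH_1:=\CH\setminus B$.

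If the inductive hypothesis returns a subhypergraph $\CH^*\subseteq\CH_1$ with $\delta_{r-1}(\CH^*)\ge k+1$, then $\CH^*\subseteq\CH$ already witnesses the first alternative. Otherwise it produces $\CH''_0\subseteq\CH_1$ with $|\CH''_0|\ge |\CH_1|/k\ge (|\CH|-k)/k$ and the co-degree-one property, and I would take $\CH''=\CH''_0\cup\{e_0\}$, which gives the size bound $|\CH''|\ge |\CH|/k$ immediately.

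For the co-degree-one property I would assign $S$ to $e_0$ as its private $(r-1)$-subset: since $\CH''_0\subseteq\CH_1$ and $\CH_1$ excludes every edge containing $S$, we have $d_{\CH''}(S)=1$. For each edge $e'\in\CH''_0$ with inductive witness $T\subseteq e'$ the co-degree of $T$ in $\CH''$ equals $1+[T\subseteq e_0]$, so the old witness still works unless $T\subseteq e_0$; in that case $T=e_0\cap e'$ is forced and I would fall back on another $(r-1)$-subset $T'\subseteq e'$ with $T'\ne T$. Any such $T'$ contains a vertex of $e'\setminus T$, which lies outside $e_0$, so $T'\not\subseteq e_0$ and hence $d_{\CH''}(T')=d_{\CH''_0}(T')$, meaning I only need one such $T'$ of co-degree one in $\CH''_0$. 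From this co-degree-one data the ``in particular'' extendable $(r-1)$-graph is immediate: let $\CG$ collect one co-degree-one witness per edge of $\CH''$ and map each back to the unique edge of $\CH''$ containing it; the map is injective by construction.

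The main obstacle is the final step of the verification -- the case $T\subseteq e_0$, in which one must produce a new witness for $e'$ from the other $(r-1)$-subsets of $e'$. For the weaker extendable $(r-1)$-graph conclusion this difficulty disappears completely: assigning $\psi(e_0)=S$ and $\psi(e')=$ the previously used private subset of $e'\in\CH''_0$ gives an injective map into $\partial_{r-1}(\CH)$, since $S\not\subseteq e'$ ensures $\psi(e')\neq S$ and inductive injectivity handles $\CH''_0$. All other ingredients -- the existence of $S$, the $k$-fold size loss per recursion level, and the witness $S$ for $e_0$ itself -- are completely routine.
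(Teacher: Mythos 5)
Your overall strategy is close in spirit to the paper's—both approaches repeatedly identify an $(r-1)$-set $S$ of small co-degree, pull one edge through $S$ into $\CH''$, and discard the rest of the edges through $S$—but the paper runs this as a single greedy loop in which the low-co-degree set is always chosen relative to the \emph{current} shrunken $\CH'$, whereas you set it up as an induction that fixes one $S$ at the top level and then recurses on $\CH\setminus B$. The two are not equivalent, and the difference is exactly where your argument breaks.

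The genuine gap is the one you yourself flag: in the case $T=e_0\cap e'$, you need a \emph{second} $(r-1)$-subset $T'\subseteq e'$ with $d_{\CH''_0}(T')=1$, but the inductive hypothesis only guarantees that $e'$ has \emph{some} co-degree-one witness in $\CH''_0$, not that it has a spare. Nothing in the induction prevents every $(r-1)$-subset of $e'$ other than $T$ from already having co-degree at least $2$ in $\CH''_0$. Concretely, with $r=3$, take $e'=\{a,b,c\}$ with inductive witness $T=\{a,b\}$, and suppose $\CH''_0$ also contains $\{a,c,d\}$ and $\{b,c,e\}$: then $\{a,c\}$ and $\{b,c\}$ already have co-degree $2$ in $\CH''_0$, and once $e_0\supseteq\{a,b\}$ is added (with $S$ being a different pair of $e_0$, so that $e'\in\CH_1$ is allowed), every pair of $e'$ has co-degree at least $2$ in $\CH''=\CH''_0\cup\{e_0\}$. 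So the inductive step fails for the main second alternative, which is the form of the lemma actually invoked in the proof of Theorem \ref{Thm:main3gr}; the ``in particular'' clause alone is not enough there.

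Your argument for the ``in particular'' clause is fine, and if you rewrite the induction so that the inductive hypothesis and conclusion are only the weaker extendable-$(r-1)$-graph statement, it goes through cleanly: $S\notin\partial_{r-1}(\CH_1)$ because $\CH_1$ omits every edge through $S$, so $\CG=\CG_0\cup\{S\}$ has the right size, and extending $S$ to $e_0\notin\CH_1$ keeps the extension injective. But to recover the full lemma you should abandon the top-down recursion and instead mimic the paper's one-pass greedy process, in which the set $S$ (and the bundle $B$ to delete) is chosen afresh at each step with respect to the shrinking $\CH'$; in particular one then needs a short separate argument (not merely ``the chosen witness works'') that the final $\CH''$ has the co-degree-one property—your small example shows that the greedily chosen witness for a later-added edge can indeed be swallowed by an earlier-added edge, so the private $(r-1)$-set of an edge of $\CH''$ is not always the one the process used to select it.
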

\begin{proof}
We apply the following greedy algorithm for $\CH$.
Initially, set $\CH'=\CH$ and $\CH''=\emptyset$.
If there exist an edge $e\in \CE(\CH')$ and a $(k-1)$-subset $e'\subseteq e$ such that $d_{\CH'}(e')\le k$,
then we place one hyperedge containing $e'$ in $\CH''$ and delete all hyperedges in $\CH'$ containing $e'$.
We continue this until there is no such pair $(e,e')$.
If $\CH'$ is nonempty, then we are done as clearly $\delta_{r-1}(\CH')\ge k+1$.
Hence, $\CH'$ is empty. Then $\CH''$ satisfies that $|\CH''|\geq |\CH|/k$ and that each hyperedge contains
an $(r-1)$-subset with co-degree $1$ in $\CH''$.
For the second statement, form $\CG$  by selecting an $(r-1)$-set with co-degree 1 from each hypergraph of $\CH''$.
Clearly, $|\CG|=|\CH''|$ and $\CG$ is extendable.
\end{proof}

In the next two lemmas, we show that if an $r$-graph $\CH$ has large $\delta_{r-1}(\CH)$,
then there exist many Berge cycles of consecutive lengths.
An $r$-graph $\CP$ is a {\it tight path of length $m$},
if it consists of $m$ edges $e_1,\dots,e_m$ and $m+r-1$ vertices $v_1,\dots,v_{m+r-1}$ such that each $e_i=\{v_i,\dots,v_{i+r-1}\}$.

\begin{lemma}\label{tight-path}
Let $r\ge 3$ and $\CP$ be an $r$-graph. If $\CP$ is a tight path of length $m+1$,
then $\CP$ contains Berge cycles of all lengths in $\{3,\dots,m\}$.
\end{lemma}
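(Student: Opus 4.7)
\noindent\emph{Proof plan.}
The plan is to first reduce the statement to the case $r=3$, and then handle $r=3$ by an explicit zigzag construction.

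Write the hyperedges of the tight $r$-path as $e_i=\{v_i,v_{i+1},\ldots,v_{i+r-1}\}$ for $i=1,\ldots,m+1$. For the reduction I would consider the $3$-subsets $e_i''=\{v_i,v_{i+1},v_{i+2}\}\subseteq e_i$; consecutive $e_i''$ share $\{v_{i+1},v_{i+2}\}$, so these form a tight $3$-path $\CP''$ of length $m+1$. Any Berge cycle of length $\ell$ in $\CP''$, consisting of distinct hyperedges $e_{i_1}'',\ldots,e_{i_\ell}''$ and distinct vertices $u_1,\ldots,u_\ell$ with $u_j u_{j+1}\subseteq e_{i_j}''$, immediately lifts to a Berge cycle of length $\ell$ in $\CP$ using the same vertices and the distinct hyperedges $e_{i_1},\ldots,e_{i_\ell}$ (since $u_j u_{j+1}\subseteq e_{i_j}''\subseteq e_{i_j}$). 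Thus it suffices to prove the lemma for $r=3$.

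For $r=3$, fix $\ell\in\{3,\ldots,m\}$. I would use the hyperedges $e_1,e_2,\ldots,e_\ell$ and take the cycle vertices from $\{v_2,v_3,\ldots,v_{\ell+1}\}$ (omitting $v_1$, since it lies only in $e_1$ and so cannot serve as a cycle vertex whose two incident cycle edges must come from distinct hyperedges). I would arrange these in a zigzag cyclic order: ascend through the even-indexed vertices, step by $1$ at the peak, descend through the odd-indexed vertices back to $v_3$, and close to $v_2$. Explicitly, for odd $\ell=2k+1$ take
\[
v_2,v_4,\ldots,v_{2k+2},v_{2k+1},v_{2k-1},\ldots,v_3,v_2,
\]
and for even $\ell=2k$ take
\[
v_2,v_4,\ldots,v_{2k},v_{2k+1},v_{2k-1},\ldots,v_3,v_2.
\]
In both cases consecutive vertices differ by $1$ or $2$, so each pair lies in some hyperedge of $\CP$.

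For the hyperedge assignment, each ``long'' ascending step $v_{2i}v_{2i+2}$ is contained in the unique hyperedge $e_{2i}$, and each long descending step $v_{2i+1}v_{2i-1}$ lies uniquely in $e_{2i-1}$, so these assignments are forced. The closing edge $v_3 v_2$ lies in $\{e_1,e_2\}$, but $e_2$ is already used, forcing $v_3 v_2\to e_1$. The remaining cycle edge is the transition: in the odd case $v_{2k+2}v_{2k+1}$, contained in $\{e_{2k},e_{2k+1}\}$; in the even case $v_{2k}v_{2k+1}$, contained in $\{e_{2k-1},e_{2k}\}$. A direct check shows that exactly one option remains unused---$e_{2k+1}$ in the odd case, $e_{2k}$ in the even case---so the full assignment is a bijection onto $\{e_1,\ldots,e_\ell\}$ and yields a Berge cycle of length $\ell$. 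The main obstacle, which is really just bookkeeping, is this parity case-check verifying that the leftover hyperedge truly contains the transition edge.
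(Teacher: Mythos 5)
Your proposal is correct and follows essentially the same route as the paper: reduce to the $r=3$ case by taking a $3$-element subset of each hyperedge, then realize each length $\ell$ via the same even/odd zigzag spine (ascend through even indices, step to an odd index, descend through odd indices, close at $v_2$), with the same forced hyperedge assignments. The only cosmetic difference is that you index by the cycle length $\ell$ whereas the paper indexes by $t=\ell+1$, and you spell out the forcing argument for the transition and closing edges more explicitly.
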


\begin{proof}
Let $e_1,\dots,e_{m+1}$ be all edges in $\CP$ such that each $e_i=\{i,\dots,i+r-1\}$.
Let $f_i=\{i,i+1,i+2\}$. Since each $f_i$ extends to $e_i$,
it suffices to find Berge cycles in the tight path $\CP'=\{f_1,...,f_{m+1}\}$ (i.e., it suffices to consider 3-graphs).
For even $t\le m+1$, consider the following 2-cycle with spine $2,4,...,t,t-1,t-3,...,3,2$.
This 2-cycle can extend to a Berge cycle of length $t-1$ in $\CP'$ with edges $f_2,f_4,...,f_{t-2},f_{t-1},f_{t-3},...,f_{3},f_1.$
There edges cover pairs $24, 46,...,(t-2)t,t(t-1),(t-1)(t-3),...,53,32$, respectively.
For odd $t\le m+1$, similarly consider the following 2-cycle with spine $2,4,...,t-3,t-1,t,t-2,...,3,2$.
This 2-cycle can extend to a Berge cycle of length $t-1$ in $\CP'$ with edges $f_2,f_4,...,f_{t-3},f_{t-1},f_{t-2},f_{t-4},...,f_{3},f_1.$
There edges cover pairs $24, 46,...,(t-3)(t-1),(t-1)t,t(t-2),(t-2)(t-4),...,53,32$, respectively.
Hence, there exist Berge cycles of all lengths in $\{3,\dots,m\}$ in $\CP'$ (and thus in $\CP$).
\end{proof}

\begin{lemma}\label{berge-cycle}
Let $r\geq 3$ and $\CH$ be an $r$-graph with $\delta_{r-1}(\CH)\geq k+1$.
Then $\CH$ contains Berge cycles of all lengths in $\{3,4,...,k+2\}$.
\end{lemma}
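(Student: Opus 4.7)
The plan is to extract a longest tight path in $\CH$ and then exploit the minimum codegree hypothesis at its trailing end to build every required Berge cycle. First, I would fix a longest tight path $\CP=(e_1,\dots,e_j)$ in $\CH$, with the canonical labeling $e_i=\{v_i,v_{i+1},\dots,v_{i+r-1}\}$ on vertices $v_1,\dots,v_{j+r-1}$. The trailing $(r-1)$-set $S=\{v_{j+1},\dots,v_{j+r-1}\}$ lies inside $e_j$, so $\delta_{r-1}(\CH)\ge k+1$ provides at least $k+1$ distinct vertices $w$ with $S\cup\{w\}\in \CE(\CH)$. By the maximality of $\CP$ none of these $w$'s can lie outside $\CV(\CP)$, and since $w\notin S$ by definition, each must belong to $\{v_1,\dots,v_j\}$. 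This forces $j\ge k+1$.

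If $j\ge k+2$, I would appeal to Lemma~\ref{tight-path} applied to a sub-tight-path of length $k+2$. As stated the lemma would only guarantee cycles of lengths $\{3,\dots,k+1\}$ from such a tight path, but a direct inspection of its proof shows that the length-$\ell$ Berge cycle uses only the first $\ell$ tight-path edges together with spine vertices in $\{2,\dots,\ell+1\}$; hence a tight path of length $\ell$ already contains Berge cycles of every length in $\{3,\dots,\ell\}$, giving all of $\{3,\dots,k+2\}$.

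The central case is $j=k+1$. The $k+1$ back-extras of $S$ must then exhaust $\{v_1,\dots,v_{k+1}\}$, so for each $i\in[k+1]$ the set $g_i:=\{v_i,v_{k+2},\dots,v_{k+r}\}$ is a hyperedge of $\CH$. For each target $\ell\in\{3,\dots,k+2\}$ I would use the spine $v_1 v_2\cdots v_{\ell-1}v_{k+r}v_1$ and assign it the $\ell$ hyperedges $e_1,e_2,\dots,e_{\ell-2},g_{\ell-1},g_1$. Each consecutive spine-pair is plainly covered by its assigned edge. The only point requiring verification is pairwise distinctness: the tight-path edges $e_i$ have consecutive index-sets $\{i,\dots,i+r-1\}$, whereas each $g_i$ has index-set $\{i,k+2,\dots,k+r\}$, which is consecutive only when $i=k+1$. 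The single resulting coincidence $g_{k+1}=e_{k+1}$ occurs exactly when $\ell=k+2$, in which case the edge list becomes $e_1,\dots,e_{k+1},g_1$, still with no repetitions.

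The principal obstacle is the unit gap between the length-$(k+1)$ tight path that the greedy extension directly produces and the desired Berge cycle of length $k+2$ at the top of the target range. The back-edge construction is tailored to close exactly this gap: the very vertices $v_1,\dots,v_{k+1}$ that obstruct further greedy extension are the ones that let me wrap the tight path around through $v_{k+r}$ to realize each prescribed length.
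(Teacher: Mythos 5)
Your argument is correct and mirrors the paper's: extract a longest tight path, use maximality together with $\delta_{r-1}(\CH)\geq k+1$ to force every back-extension of the trailing $(r-1)$-set $S$ into the earlier spine vertices, concluding that the path has length at least $k+1$, and then close cycles through $S$. The only divergence is organizational: you correctly observe that the construction inside the proof of Lemma~\ref{tight-path} in fact produces Berge cycles of all lengths in $\{3,\dots,\ell\}$ from a tight path of length $\ell$ (one more than the lemma states, since the edges $f_1,\dots,f_{t-1}$ and vertices $2,\dots,t$ remain available for $t$ up to $\ell+1$), which lets you dispatch $j\geq k+2$ in a single stroke and reserve an explicit uniform construction via the back-edges $g_1,g_{\ell-1}$ for $j=k+1$ alone, whereas the paper quotes the weaker stated version of the lemma and then hand-builds the top lengths $k+1,k+2$ in each of the two sub-cases $m=k+1$ and $m=k+2$.
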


\begin{proof}
Let $\CP$ be a longest tight path in $\CH$, say of length $m$.
Let $e_1,\dots,e_{m}$ be all edges in $\CP$ such that each $e_i=\{v_i,\dots,v_{i+r-1}\}$.
Let $S=\{v_{m+1},...,v_{m+r-1}\}$ be a subset of vertices with size $r-1$.
All edges $f$ of $\CH$ containing $S$ satisfy that $f\backslash S\subseteq \{v_1,...,v_m\}$,
as otherwise $\CP\cup f$ would be a longer tight path than $\CP$, contradicting the choice of $\CP$.
This also shows that $m\ge d_{\CH}(S)\ge d_{r-1}(\CH)\geq k+1$.
By Lemma \ref{tight-path}, $\CP$ contains Berge cycles of all lengths in $\{3,\dots,m-1\}$.
Hence, we may assume that $k+1\le m\le k+2$ and in particular there exist Berge cycles of all lengths in $\{3,4,...,k\}$ in $\CH$.

Suppose $m=k+1$. It is clear that $d_\CH(S)=k+1$ and all $k+1$ edges $f_1,...,f_{k+1}$
in $\CH$ containing $S$ are such that $f_i=S\cup \{v_i\}$. Then there exist a Berge cycle
$\{e_1,...,e_{k+1},f_1\}$ of length $k+2$ with spine $v_1,v_2,...,v_{k+1},v_{k+2}$ and a Berge cycle
$\{e_2,...,e_{k+1},f_2\}$ of length $k+1$ with spine $v_2,...,v_{k+1},v_{k+2}$.
So $\CH$ contains Berge cycles of all lengths in $\{3,4,...,k+2\}$.

Therefore, we have $m=k+2$. There are at least $k+1$ edges $f_1,...,f_{k+1}$
in $\CH$ containing $S$ such that $f_i\backslash S\subseteq \{v_1,...,v_{k+2}\}$.
So there is some $i\in [k+2]$ such that for every $j\in [k+2]\backslash \{i\}$, $S\cup \{v_j\}\in \CE(\CH)$.
Let $j_0, j_1$ be the first and second integers in $[k+2]\backslash \{i\}$.
Then there exist a Berge cycle $\{e_j: j\in [k+2]\backslash \{i\}\}\cup (S\cup \{v_{j_0}\})$ of length $k+2$
with spine $\{v_j: j\in [k+3]\backslash \{i\}\}$, and a Berge cycle $\{e_j: j\in [k+2]\backslash \{i,j_0\}\}\cup (S\cup \{v_{j_1}\})$ of length $k+1$
with spine $\{v_j: j\in [k+3]\backslash \{i,j_0\}\}$. This finishes the proof.
\end{proof}

We are now ready to prove Theorem \ref{Thm:main}. We will use induction on $r$. The following theorem
forms the basis step.

\begin{theorem}\label{Thm:main3gr}
Let $\CG$ be a $3$-graph with $d(\CG)\geq 105k^2+63k$.
Then $\CG$ contains Berge cycles of $k$ consecutive lengths.
\end{theorem}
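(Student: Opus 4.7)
The plan is to apply Lemma \ref{delta-system} to $\CG$ with parameter $k$, branching on its two possible outputs. If it returns a subhypergraph $\CG' \subseteq \CG$ with $\delta_2(\CG') \geq k+1$, then Lemma \ref{berge-cycle} immediately yields Berge cycles of all lengths in $\{3, 4, \ldots, k+2\}$ in $\CG'$, producing the desired $k$ consecutive lengths. Otherwise, it returns an extendable $2$-graph $G \subseteq \partial \CG$ with extension $\psi\colon E(G) \to \CE(\CG)$ satisfying $|E(G)| \geq |\CG|/k$. Writing $n = |\CV(\CG)|$ and using $|\CG| = d(\CG)\,n/3 \geq (35k^2 + 21k)\,n$, we obtain $|E(G)| \geq (35k + 21)\,n$ and hence $d(G) \geq 70k + 42$.

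In this second case, the crucial remark is that any graph cycle $C$ of length $\ell$ in $G$ lifts to a Berge cycle of length $\ell$ in $\CG$ with $C$ as its spine: the $\ell$ distinct edges of $C$ map, via the injection $\psi$, to $\ell$ distinct hyperedges of $\CG$, each containing the corresponding pair. Thus the task reduces to exhibiting cycles of $k$ consecutive lengths in the $2$-graph $G$.

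To find such cycles in $G$, I would apply Lemma \ref{prop:ave->min} in the case $r = 2$ to pass to a subgraph $G^*$ with $\delta(G^*) \geq 35k + 21$, restrict to a nontrivial connected component, and run BFS from some root to produce levels $L_0, L_1, \ldots$. Non-tree edges inside a single level $L_i$ close via the BFS-tree to form odd cycles, while non-tree edges joining $L_i$ to $L_{i+1}$ close to form even cycles. An averaging argument in the spirit of Lemma \ref{twice-radius-removal} locates some level with abundant non-tree edges, and then the special-path lemma (Lemma \ref{special-paths}), applied with the $2$-coloring determined by which subtree of the LCA a non-tree edge's endpoints lie in, produces cycles of $k$ consecutive lengths simultaneously covering both parities.

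The main obstacle is this final step: securing $k$ consecutive cycle lengths of mixed parity in a $2$-graph of large minimum degree. The existing results of Verstra\"ete and Liu-Ma cited in the introduction deliver $k$ consecutive cycles of a fixed parity, but interleaving both parities into $k$ consecutive integer lengths requires a unified BFS analysis modelled on Lemmas \ref{twice-radius-A} and \ref{twice-radius-BC}, adapted from $3$-graphs to $2$-graphs. The specific constant $70k + 42 = 14(5k+3)$ appears tailored to make this analysis go through and to propagate, via the delta-system loss of a factor $k$, to the threshold $105k^2 + 63k = 3k \cdot (35k+21)$ in the hypothesis on $d(\CG)$.
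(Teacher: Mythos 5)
The first branch of your argument (applying Lemma \ref{delta-system} and, when it returns $\CG'$ with $\delta_2(\CG')\ge k+1$, invoking Lemma \ref{berge-cycle}) matches the paper exactly. The second branch, however, has a genuine gap: you reduce to finding cycles of $k$ consecutive integer lengths in the extendable $2$-graph $G\subseteq\partial\CG$, but nothing guarantees that $G$ is non-bipartite, and a bipartite $2$-graph has no odd cycles at all, so it cannot contain cycles of $k\ge 2$ consecutive lengths no matter how large its minimum degree is. This is not a technical difficulty that a cleverer BFS analysis modelled on Lemmas \ref{twice-radius-A} and \ref{twice-radius-BC} could overcome; it is a structural obstruction. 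Concretely, take $\CG$ with vertex set $A\cup B\cup\{c\}$ ($|A|=|B|=t$) and hyperedges $\{a,b,c\}$ for all $a\in A$, $b\in B$: here each pair $\{a,b\}$ has co-degree $1$, so Lemma \ref{delta-system} returns $\CG''=\CG$ and the extendable $2$-graph formed from the co-degree-$1$ pairs is $K_{t,t}$, which is bipartite. This $\CG$ does contain Berge cycles of consecutive lengths, but only by routing through the third vertex $c$, which your plan has discarded.

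The paper's proof gets around this precisely by refusing to abandon the third vertex. After the delta-system step, it further splits $\CG''$ into hyperedges containing a pair of co-degree $\ge 3$ (the ``high-pair'' part $\CG_1$) and the rest ($\CG_2$, where every pair has co-degree $\le 2$). In the $\CG_1$ case, it uses a random sparsification to build a $2$-graph, applies Verstra\"ete's Theorem \ref{Jacques-even-cycles} to get $k/2$ consecutive \emph{even} cycles, and then uses the high co-degree of a pair through the third vertex to stretch each even cycle of length $\ell$ into a Berge cycle of length $\ell+1$, producing both parities. In the $\CG_2$ case, it extracts a linear subhypergraph of comparable size and applies Theorem \ref{theorem-twice-radius}, which produces Berge cycles of $k$ consecutive lengths directly in the $3$-graph. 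Both branches work because the third vertex of each hyperedge supplies the parity-shifting mechanism that a plain shadow $2$-graph cannot.
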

\begin{proof}
By Lemma \ref{delta-system}, $\CG$ has either a subhypergraph $\CG'$ with $\delta_2(\CG')\geq k+1$,
or a subhypergraph $\CG''$ with $|\CG''|\geq |\CG|/k$  in which each hyperedge contains a pair that has co-degree $1$ in $\CG''$.
In the former case, by Lemma \ref{berge-cycle}, $\CG'$ contains Berge cycles of $k$ consecutive lengths and we are done.
Hence, we may assume the latter case. For each hyperedge in $\CG''$ let us mark a pair in it that has co-degree $1$.
By our assumption, each hyperedge in $\CG''$ has a marked pair.

Let us call a pair $uv$ a {\it high pair} if its co-degree in $\CG''$ is at least $3$ and a {\it low pair} otherwise.
Let $\CG_1$ consist of all the hyperedges in $\CG''$ that contain a high pair and
$\CG_2$ consist of all the other hyperedges in $\CG''$.
Since $d(\CG'')\geq d(\CG)/k\geq 105k+63$, one of the following two cases applies.

\medskip

{\bf Case 1.} $d(\CG_1)\geq 42k$.

\medskip

Let $S$ be a random set of vertices with each vertex of $\CG_1$ selected independently with probability $\frac{2}{3}$.
For each hyperedge in $\CG_1$, call it {\it good} for $S$ if the two vertices in its marked pair are both in $S$ and the third
vertex is not in $S$. The probability of a hyperedge being good is $\frac{4}{27}$.
So there exists a set $S$ for which at least $\frac{4}{27}|\CG_1|$ of
the hyperedges are good. Fix such a set $S$ and let $\CG_1^*$ consist of all the good hyperedges of $\CG_1$. By our assumption
$$d(\CG^*_1)\geq \frac{4}{27} d(\CG_1)\geq 6k.$$
Let $G_1=\{e\cap S: e\in \CG^*_1\}$. Then $G_1$ is a $2$-graph and
there is a bijection between edges in $G_1$ and hyperedges in $\CG^*_1$. In particular, $|G_1|=|\CG^*_1|$.
Hence $d(G_1)\geq \frac{2}{3} d(\CG^*_1)\geq 4k$. By Theorem \ref{Jacques-even-cycles},
 $G_1$ contains
cycles of $k/2$ consecutive even lengths. To complete this case, observe that if $C$ is a cycle of
length $\ell$ in $G_1$, then $\CG$ contains a Berge cycle of length $\ell$ and a Berge cycle of length $\ell+1$.
Indeed, suppose $C=u_1u_2\dots u_\ell u_1$. By our definition, all edges on $C$ extend to
different hyperdges in $\CG$. So we obtain a Berge cycle of length $\ell$. Let $u_1u_2w$ be the unique
hyperedge in $\CG^*_1$. By definition, $w\notin S$. Also, at least one pair in $u_1u_2w$ is a high pair.
Since $u_1u_2$ is a marked pair and has co-degree $1$, either $u_1w$ or $u_2w$ is a high pair.
By symmetry suppose $u_1w$ is a high pair. Since $u_1w$ has co-degree at least $3$ in $\CG''$
there is a hyperedge $u_1wz$ where $z\notin \{u_2, u_\ell\}$. It is easy to see that $u_1wu_2\cdots u_\ell u_1$
is an extendable cycle in $\partial(\CG'')$ of length $\ell+1$. So $\CG$ contains a Berge cycle of length $\ell+1$.

\medskip

{\bf Case 2.} $d(\CG_2)\geq 63(k+1)$.

\medskip

By our assumption, for each hyperedge in $\CG_2$, one of its pairs have co-degree $1$ and the other two have
co-degree at most $2$. Define an auxiliary graph $L$ whose vertices are hyperedges in $\CG_2$ such
that two vertices in $L$ are adjacent if the corresponding hyperedges in $\CG_2$ share a pair. Then $L$ has maximum degree
at most $2$ and thus has an independent set of size at least $n(L)/3$.
Therefore, there is a linear subhypergraph $\CG^*_2$ of $\CG_2$
with $|\CG^*_2|\geq \frac{1}{3}|\CG_2|$. Hence $d(\CG^*_2)\geq 21(k+1)$.
By Theorem \ref{theorem-twice-radius}, $\CG^*_2$ contains Berge cycles of $k$ consecutive lengths.
\end{proof}

We now prove Theorem \ref{Thm:main}.

\bigskip

{\noindent\bf Proof of Theorem \ref{Thm:main}.}
We prove by induction on $r\geq 3$ that every $r$-graph $\CH$ with $d(\CH)\ge r\cdot (35k^{r-1}+21k^{r-2})$
contains Berge cycles of $k$ consecutive lengths.
Theorem \ref{Thm:main3gr} forms the basis step. For the induction step, let $r\geq 4$.

Assume the claim holds for $(r-1)$-graphs.
Let $\CH$ be an $r$-graph with $d(\CH)\ge r\cdot (35k^{r-1}+21k^{r-2})$.
By Lemma \ref{delta-system},
either there exists a subhypergraph $\CH'\subseteq \CH$ with $\delta_{r-1}(\CH')\ge k+1$,
or there exists an extendable $(r-1)$-graph $\CG\subseteq \partial_{r-1}(\CH)$ such that $|\CG|\geq |\CH|/k$.
In the former case, by Lemma \ref{berge-cycle}, we can find Berge cycles of lengths in $\{3,4,...,k+2\}$ in $\CH$
and we are done. So assume the latter case. Then $d(\CG)\ge \frac{r-1}{r}\cdot \frac{d(\CH)}{k}\geq (r-1)\cdot (35k^{r-2}+21k^{r-3})$.
By induction, $\CG$ contains Berge cycles of $k$ consecutive lengths.
Because $\CG$ is extendable, $\CH$ also contains Berge cycles of the same $k$ consecutive lengths.
\qed

\bigskip

We then have the following corollary.

\begin{corollary}\label{cor:edges}
There exists an absolute constant $c>0$ such that the following holds for all $r\ge 3$.
Any $n$-vertex $r$-graph $\CH$ with at least $ck^{r-1}n$ edges contains Berge cycles of $k$ consecutive lengths.
\end{corollary}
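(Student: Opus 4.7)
The plan is to derive Corollary~\ref{cor:edges} as an essentially immediate consequence of Theorem~\ref{Thm:main}, simply by converting the average-degree threshold there into an edge-count threshold here. For an $n$-vertex $r$-graph $\CH$ we have the identity $d(\CH) = r|\CH|/n$, so any lower bound on $|\CH|$ translates directly into a lower bound on $d(\CH)$, losing only a factor of $r/n$ that we account for explicitly.

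Recall that the inductive proof of Theorem~\ref{Thm:main} does not merely yield the $\Omega(k^{r-1})$ statement but actually establishes the explicit threshold
\[
d(\CH) \;\ge\; r\bigl(35k^{r-1}+21k^{r-2}\bigr).
\]
Translating via $d(\CH)=r|\CH|/n$, the hypothesis $|\CH| \ge (35k^{r-1}+21k^{r-2})n$ already suffices to guarantee Berge cycles of $k$ consecutive lengths. Since $k\ge 1$ gives $21k^{r-2}\le 21k^{r-1}$, we have $35k^{r-1}+21k^{r-2}\le 56k^{r-1}$, and therefore the choice $c=56$ works uniformly for every $r\ge 3$: whenever $|\CH|\ge c k^{r-1} n$, the hypothesis of Theorem~\ref{Thm:main} is met and the desired cycles exist.

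The only point worth checking, and the closest thing to an obstacle, is that the constants produced by the induction in Theorem~\ref{Thm:main} have no hidden dependence on $r$ beyond the prefactor of $r$ that cancels against the $1/r$ in $d(\CH)=r|\CH|/n$. This is visible directly from the inductive bound $r(35k^{r-1}+21k^{r-2})$ recorded in its proof (the induction multiplies by $(r-1)/r$ when passing from $\CH$ to the $(r-1)$-shadow $\CG$, so the $r$ and $1/r$ cancel cleanly). Hence no $r$-dependence leaks into $c$, and the corollary follows with a single absolute constant.
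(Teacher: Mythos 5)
Your proof is correct and is essentially the same immediate translation the paper intends: the induction in Theorem~\ref{Thm:main} yields the explicit threshold $d(\CH)\ge r(35k^{r-1}+21k^{r-2})$, and since $d(\CH)=r|\CH|/n$, the bound $35k^{r-1}+21k^{r-2}\le 56k^{r-1}$ gives the absolute constant $c=56$.
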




\section{Related Tur\'an type results}  \label{turan}

\subsection{Cycles of consecutive even lengths in graphs}

\medskip
Following arguments along the line of Lemma \ref{twice-radius-BC} (i.e.,
to define {\it monochromatic} and {\it non-monochromatic} edges and then apply Lemma \ref{special-paths}), we
can readily prove the following slightly weaker version of Verstra\"ete's theorem.

\medskip

\begin{proposition} \label{consecutive-even}
Let $G$ be a bipartite graph with average degree at least $6k$ and radius $h$.
Then $G$ contains cycles of $k$ consecutive even lengths. Further, the shortest
of these cycles has length at most $2h$.
\end{proposition}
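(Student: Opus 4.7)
The plan is to mirror the proof of Lemma~\ref{twice-radius-BC}, which simplifies considerably in the graph setting thanks to bipartiteness. I would first choose a center $r$ of $G$ and a BFS tree $T$ rooted at $r$; by the choice of $r$, the height of $T$ is at most $h$. Set $L_i=\{v:d_T(r,v)=i\}$. Because $G$ is bipartite, the two parts of $G$ are exactly the even and odd levels of $T$, so every non-tree edge joins two consecutive levels $L_i$ and $L_{i+1}$; denote this set of non-tree edges by $E_i$. Assume for contradiction that $G$ contains no $k$ consecutive even cycles whose shortest length is at most $2h$.

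The key per-component claim is that for each $i$ and each connected component $G_0$ of $E_i$, one has $|E(G_0)|=O(k)\,|V(G_0)|$. Fix such a $G_0$, let $r^*\in L_{i'}$ be the closest common ancestor in $T$ of $V(G_0)$, with corresponding subtree $T^*$ and children $s_1,\dots,s_m$. Color the descendants of $s_1$ with color~$1$ and the remaining descendants of $r^*$ with color~$2$; by the choice of $r^*$ both colors appear in $V(G_0)$, and since $G_0$ is connected the non-monochromatic edge class is nonempty. Call an edge of $G_0$ \emph{monochromatic} or \emph{non-monochromatic} according to whether its endpoints share a color. The closing observation, identical in spirit to the one in Lemma~\ref{twice-radius-BC}, is that whenever $P$ is an extendable $(u,v)$-path of length $\ell$ in $G_0$ with $c(u)\ne c(v)$, the unique tree paths $X$ from $r^*$ to $u$ and $Y$ from $r^*$ to $v$ meet only at $r^*$, so $X\cup Y\cup P$ is a cycle in $G$ of length $(\mathrm{lev}(u)-i')+(\mathrm{lev}(v)-i')+\ell$.

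Now split into two subcases. In the mono-majority subcase, apply Lemma~\ref{special-paths} with color~$1$=mono and color~$2$=non-mono: if the mono subgraph has average degree at least $2k$, we obtain a path $u_0u_1\cdots u_{2k-1}$ whose first edge is non-monochromatic and whose other edges are monochromatic. Then $c(u_0)\ne c(u_j)$ for all $j\ge 1$, and since every edge lies in $E_i$ the vertices alternate between $L_i$ and $L_{i+1}$. Applying the closing observation to initial subpaths of lengths $\ell=1,2,\dots,2k-1$ yields cycles of even lengths $2(i-i')+2,\,2(i-i')+4,\dots,2(i-i')+2k$; the shortest is at most $2(h-1)+2=2h$, contradicting the hypothesis. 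In the non-mono-majority subcase, apply the Erd\H{o}s--Gallai theorem (Theorem~\ref{EG-theorem}) directly to the non-monochromatic subgraph: if its average degree exceeds $2k-2$ we get a path $v_0v_1\cdots v_{2k-1}$ of non-monochromatic edges, whose vertex colors alternate, so that for each odd $\ell\in\{1,3,\dots,2k-1\}$ the closing observation yields a cycle of even length $2(i-i')+1+\ell\in\{2(i-i')+2,\dots,2(i-i')+2k\}$, again contradicting the hypothesis. Taking contrapositives gives $|E(G_0)|=O(k)\,|V(G_0)|$, hence $|E_i|=O(k)\,(|L_i|+|L_{i+1}|)$; summing over $i$ and adding the $n-1$ tree edges contradicts $d(G)\ge 6k$.

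The main obstacle is to pin down the exact constant $6k$ in the conclusion. The straightforward analysis above yields $|E(G_0)|<(2k-1)\,|V(G_0)|$ per component, which sums to a bound of the form $d(G)<8k-2$ rather than $6k$; reducing this to the advertised $6k$ requires careful asymmetric bookkeeping of the thresholds in Lemma~\ref{special-paths} and Theorem~\ref{EG-theorem}, together with the telescoping estimate $\sum_{i=0}^{h-1}(|L_i|+|L_{i+1}|)\le 2n-1-|L_h|$.
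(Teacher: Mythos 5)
Your approach is the one the paper intends---the paper itself gives no explicit proof of Proposition~\ref{consecutive-even}, only the pointer that it follows ``along the line of Lemma~\ref{twice-radius-BC}.'' Your BFS decomposition into levels, the color classes $c$ via a closest common ancestor $r^*$, and the split into monochromatic/non-monochromatic edges all match that blueprint.

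There is, however, a genuine gap in your ``closing observation.'' You check only that $V(X)\cap V(Y)=\{r^*\}$, but for $X\cup Y\cup P$ to be a simple cycle of the stated length you also need $V(X\cup Y)\cap V(P)=\{u,v\}$, i.e.\ the tree paths must avoid the \emph{interior} of $P$. In the non-monochromatic subcase this does hold automatically: colors and levels alternate in sync along $P$, so the tree path to an endpoint passes through a vertex in $L_i$ whose color is the \emph{opposite} of every interior $P$-vertex in $L_i$, and hence cannot coincide with one. But in the monochromatic subcase the path $u_0u_1\cdots$ has $c(u_j)=1$ for all $j\ge1$, so when $\ell$ has the parity that places $u_\ell\in L_{i+1}$, the tree path $Y$ to $u_\ell$ passes through the parent of $u_\ell$ in $L_i$, which also has color~$1$ and may perfectly well be one of the interior vertices $u_j\in L_i$ of $P_\ell$. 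Nothing in your setup rules this out, so for roughly half the values $\ell\in\{1,\dots,2k-1\}$ the claimed cycle need not be simple. This is exactly why the paper's Claim~2 in Lemma~\ref{twice-radius-BC} is careful to take $y_\ell$ to be the endpoint lying in $L_i$ (``among the two endpoints \dots there exists one that lies in $L_i$, denote that vertex by $y_\ell$''). The fix in your setting is the same: restrict to those $\ell$ with $u_\ell\in L_i$ (every other initial subpath). The discarded $\ell$'s give the same cycle lengths anyway, so you still obtain cycles of lengths $2(i-i')+2,\,2(i-i')+4,\dots,2(i-i')+2k$; the only cost is that you need the path from Lemma~\ref{special-paths} to have length $2k$ rather than $2k-1$, so the mono-majority threshold becomes $d\ge 2k+1$ rather than $2k$. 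This nudges the constants further, but as you already note, the constant bookkeeping is a secondary matter; the disjointness of $X\cup Y$ from the interior of $P$ is the point that must not be skipped.
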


This provides a first proof which does not use the $(A,B)$-path lemma (Lemma \ref{BS}).
This also gives yet another proof of the theorem of Bondy and Simonovits on $ex(n,C_{2k})$ without using
either the $(A,B)$-path lemma or the Faudree-Simonovits blowup method.
As Lemma \ref{special-paths} (and its hypergraph extension) can be easily adapted,
we anticipate this new method will find further applications in Tur\'an type extremal problems on cycles
in graphs or hypergraphs.

\subsection{Berge cycles of prescribed consecutive lengths}
Let $\CB C_\ell$ denote the family of $r$-graphs consisting of all Berge cycles of length $\ell$.
Let $r\ge 3$ and $\CH$ be an $r$-graph with $n$ vertices.
Corollary \ref{cor:edges} shows that if $|H|\ge \Omega(k^{r-1}n)$, then $\CH$ contains Berge
cycles of $k$ consecutive lengths.
If in addition to find Berge cycles of $k$ consecutive lengths one also wants to control the lengths to not be large,
i.e., the maximum length is no more than $k+p$, then how many edges in an $r$-graph will suffice?
In this subsection we provide an answer to this question (see Theorem \ref{Thm:length-control}).

Using Lemmas \ref{twice-radius-A} and \ref{twice-radius-BC}, we can prove the following theorem for linear 3-graphs.
Its proof uses similar arguments as the ones in \cite{FO} by F\"uredi and \"Ozkahya, who proved that
any $n$-vertex $\CB C_{2k+1}$-free linear 3-graph has at most $2k\cdot n^{1+1/k}+9k\cdot n$ edges.
However their proof method was not designed for finding Berge cycles of consecutive lengths.

\begin{theorem} \label{cycles-in-linear-dense}
Let $h,k\geq 2$. Every $n$-vertex linear $3$-graph $\CH$ with $|\CH|\geq 18k n^{1+1/h}+42kn$ contains Berge cycles of $k$ consecutive lengths,
the shortest of which has length at most $2h$.
\end{theorem}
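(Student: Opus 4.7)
The plan is to proceed by induction on $n$, combining Lemmas~\ref{twice-radius-A}, \ref{twice-radius-BC}, \ref{twice-radius-removal} with the reduction in Lemma~\ref{prop:ave->min}. I would first apply Lemma~\ref{prop:ave->min} to a minimum-vertex subhypergraph realizing $d(\CH)$---necessarily connected by a standard minimality argument---to reduce to the case where $\CH$ is connected with $\delta(\CH) \geq d(\CH)/3 \geq 18kn^{1/h} + 42k =: D$, while $|\CV(\CH)| \leq n$. Suppose for contradiction that $\CH$ contains no Berge cycles of $k$ consecutive lengths with shortest length at most $2h$. Fix any root $r$ and construct a maximal extendable skeleton $T$ rooted at $r$; by connectedness $T$ spans $V(\CH)$. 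Writing $n_i := |L_i(T)|$, Lemmas~\ref{twice-radius-A} and~\ref{twice-radius-BC} then give $|\CA_i| \leq (k+2) n_i$ for $1 \leq i \leq h$, and $|\CB_i \cup \CC_i| \leq (7k/2+1) n_i + (5k/2+2) n_{i+1}$ for $1 \leq i \leq h-1$.

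Next I would argue that $T$ must reach level $h$: otherwise, with tree height at most $h-1$, Lemma~\ref{twice-radius-removal} gives $|\CH| < 7(k+1)n$, which contradicts the hypothesis $|\CH| \geq 18kn^{1+1/h} + 42kn$ for any $n, k \geq 1$. Hence $V(\CH) = V_{\leq h-1} \cup U$ with $V_{\leq h-1} := \bigcup_{i \leq h-1} L_i(T)$ and $U := \bigcup_{i \geq h} L_i(T)$ disjoint, and $|U| < n$. Since $\CH[U]$ is a linear $3$-graph on fewer than $n$ vertices, the inductive hypothesis says that if $|\CH[U]| \geq 18k|U|^{1+1/h} + 42k|U|$, then $\CH[U]$---and hence $\CH$---already contains the sought cycles, a contradiction. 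Therefore $|\CH[U]| < 18k|U|^{1+1/h} + 42k|U|$.

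It remains to bound the edges of $\CH$ touching $V_{\leq h-1}$. These split into three groups: tree edges at levels at most $h$, contributing $|V_{\leq h-1}| + n_h - 1$; non-tree edges anchored at levels at most $h-1$, bounded via the inequalities above by $(7k+5)(|V_{\leq h-1}| + n_h)$; and tree edges at levels at least $h+1$ whose third vertex lies in $V_{\leq h-1}$, bounded by $|U|$. Combining with the inductive bound on $|\CH[U]|$, using $|V_{\leq h-1}| + |U| = n$, and invoking convexity of $x^{1+1/h}$ should yield $|\CH| \leq 18kn^{1+1/h} + 42kn$, contradicting the hypothesis. The chief technical obstacle will be closing up the constants in this boundary estimate: the $n_h$ terms (from $\CA_h$, $\CC_{h-1}$, and tree edges at level $h$) have magnitude $O(k n_h)$ with $n_h$ possibly as large as $|U|$, so they must be absorbed against the inductive slack $42k|U|$---which is precisely why the hypothesis uses coefficients $18k$ and $42k$.
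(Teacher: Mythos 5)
Your proposal is a genuine departure from the paper's proof. You run an outer induction on $n$, peeling off the first $h$ BFS levels and recursing on what remains; the paper instead runs a single internal induction on the level index, establishing $|L_i|\geq n^{1/h}|L_{i-1}|$ for $1\le i\le h-1$ by comparing the lower bound $m\geq\frac13\sum_{v\in L_i}d_{\CH'}(v)$ against the Lemma~\ref{twice-radius-A}/\ref{twice-radius-BC} upper bounds, and concludes with the contradiction $|L_h|\geq n>|\CV(\CH')|$. No recursion on $n$, no spanning requirement, no boundary bookkeeping.

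As written, your argument has a fatal gap at the claim ``by connectedness $T$ spans $V(\CH)$.'' A maximal extendable skeleton need not span $\partial\CH$ even when $\CH$ is connected; the paper states this explicitly right after Algorithm~\ref{BFS}, and the hypergraph consisting of a single edge $\{1,2,3\}$ already yields a two-vertex $T$. One can also build connected linear $3$-graphs with minimum degree $2$ in which some vertex is never reached, because the BFS adds only one new vertex per processed edge. Once $T$ fails to span, the partition $V(\CH)=V_{\leq h-1}\cup U$ omits $W:=V(\CH)\setminus V(T)$ and every edge meeting $W$, so the accounting is incomplete. A natural patch is to set $U':=U\cup W$ and observe that every edge of $\CE'_T$ is actually contained in $V(T)$ by Proposition~\ref{level-containment}, so that each edge either meets $V_{\leq h-1}$ or lies inside $U'$; but even after this the numerology does not close. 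The edges meeting $V_{\leq h-1}$ are bounded, via Lemmas~\ref{twice-radius-A} and~\ref{twice-radius-BC} together with the tree-edge count, by roughly $(7k+6)|V_{\leq h-1}|+\Theta(k)|L_h|$, and $|L_h|$ can be comparable to $|U'|$. Substituting $|V_{\leq h-1}|=n-|U'|$ into $|\CH|\leq(\text{boundary})+18k|U'|^{1+1/h}+42k|U'|$ forces an inequality of the shape $(42k+\Theta(k))|U'|+18k|U'|^{1+1/h}\leq 18kn^{1+1/h}+(35k-\Theta(1))n$, which fails when $|U'|$ is close to $n$ since the coefficient on $|U'|$ on the left exceeds the coefficient on $n$ on the right. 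The constants $18k$ and $42k$ are calibrated to the paper's level-growth recurrence, not to a peel-and-recurse scheme, so even with the spanning issue resolved you would need strictly larger constants, and hence would be proving a weaker statement than Theorem~\ref{cycles-in-linear-dense}.
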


\begin{proof}
By Lemma \ref{prop:ave->min}, there exists a subhypergraph $\CH'$ of $\CH$ with $\delta(\CH')\ge 18kn^{1/h}+42k$.
Let $T$ be a maximal extendable skeleton in $\CH'$ rooted at some vertex $r$.
For each $i\geq 0$, let $L_i=\{v\in V(T): d_T(r,v)=i\}$, $\CA_i=\{e\in \CH': |e\cap L_i|=2, |L_{i-1}|=1\}$,
$\CB_i=\{e\in \CH': |e\cap L_i|=2, |e\cap L_{i+1}|=1\}$, and $\CC_i=\{e\in \CH': e\subseteq L_i\}$.
By Lemmas \ref{twice-radius-A} and \ref{twice-radius-BC}, for any $i\le h-1$ we may assume
(being quite generous for the sake of simplicity) that
\begin{equation} \label{ABC}
|\CA_i|\leq 2k|L_i|  \mbox{ and }|\CB_i\cup \CC_i|\leq 4k|L_i|+4k|L_{i+1}|.
\end{equation}

We prove by induction on $1\le i\le h-1$ that $|L_i|\geq |L_{i-1}|\cdot n^{1/h}$.
The base case $i=1$ follows by the facts that $\CH'$ is linear and $\delta(\CH')\ge 18kn^{1/h}$.
Now suppose that it holds for $i\le h-2$. We consider the number $m$ of edges intersecting $L_{i}$.
We have $m=|\CA_{i}|+|\CB_{i}|+|\CC_{i}|+|\CC_{i-1}|+|\CA_{i+1}|$. By \eqref{ABC},
\begin{align*}
m&\le 2k|L_i|+4k|L_i|+4k|L_{i+1}|+4k|L_{i-1}|+4k|L_i|+2k|L_{i+1}|\\
&=4k|L_{i-1}|+10k|L_i|+6k|L_{i+1}|\leq 14k|L_i|+6k|L_{i+1}|.
\end{align*}
On the other hand, $m\geq \frac13\cdot \sum_{v\in L_i} d_{\CH'}(v)\ge |L_i|\cdot (6kn^{1/h}+14k)$.
Combining the above inequalities, it follows that $|L_{i+1}|\geq |L_i|\cdot n^{1/h}$.
Therefore, $|L_h|\geq n$. This contradiction completes the proof.
\end{proof}
We point out that just like in the proof of Theorem \ref{Thm:Li3Berge}, one can establish
a similar statement for linear $r$-graphs for all $r\geq 3$ (by reducing them to linear $3$-graphs).

\medskip

\noindent{\bf Remark.} Observe that the proof of Theorem \ref{cycles-in-linear-dense} in fact yields
the following more general statement:  If $\CH$ is an $n$-vertex  linear $3$-graph with average degree
$d\geq 45k$, then $\CH$ contains Berge cycles of $k$ consecutive lengths, the shortest
of which has length at most $O(\log_{d/k} n)$.

\medskip

Using the reduction lemmas in Section 4, along the same lines as in the proof of Theorem \ref{Thm:main} (and Theorem \ref{Thm:main3gr}),
we also can obtain the following result from Theorem \ref{cycles-in-linear-dense}. We omit the details.

\begin{theorem}\label{Thm:length-control}
There exists an absolute constant $c>0$ such that the following holds for all $h,k\ge 2$ and $r\ge 3$.
Every $n$-vertex $r$-graph $\CH$ with at least $ck^{r-1}n^{1+1/h}$ edges contains Berge cycles of $k$ consecutive lengths,
the shortest of which has length at most $2h$.
\end{theorem}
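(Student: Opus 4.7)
\textbf{Proof plan for Theorem \ref{Thm:length-control}.}
The plan is to follow the reduction framework established in Sections 3 and 4, with Theorem \ref{cycles-in-linear-dense} playing the role that Theorem \ref{theorem-twice-radius} played in the proof of Theorem \ref{Thm:main3gr}, and with Verstra\"ete's Theorem \ref{Jacques-even-cycles} (rather than a crude Erd\H{o}s--Gallai input) supplying length-controlled even cycles in the auxiliary 2-graph that arises after random sampling. The proof proceeds by induction on $r \geq 3$, the base case $r = 3$ carrying almost all the content.

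For the base case, let $\CH$ be a $3$-graph with $|\CH| \geq C k^2 n^{1+1/h}$ for a sufficiently large absolute constant $C$. First apply Lemma \ref{delta-system}. If we land in the $\delta_2(\CH') \geq k+1$ alternative, then Lemma \ref{berge-cycle} immediately produces Berge cycles of all lengths in $\{3, \ldots, k+2\}$, and the shortest has length $3 \leq 2h$. Otherwise we obtain $\CH'' \subseteq \CH$ with $|\CH''| \geq |\CH|/k \geq Ckn^{1+1/h}$ and a marked pair of co-degree $1$ in each edge. Partition $\CH''$ into $\CG_1$ (edges containing a high pair, i.e.\ co-degree $\geq 3$) and $\CG_2$ (the remaining edges), exactly as in Theorem \ref{Thm:main3gr}; at least one of $|\CG_1|, |\CG_2|$ is $\geq |\CH''|/2 \gtrsim kn^{1+1/h}$.

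When $|\CG_2|$ dominates, the max-degree-$2$ auxiliary graph argument of Theorem \ref{Thm:main3gr} extracts a linear sub-hypergraph $\CG_2^* \subseteq \CG_2$ of size $\geq |\CG_2|/3 \gtrsim kn^{1+1/h}$; applying Theorem \ref{cycles-in-linear-dense} to $\CG_2^*$ directly yields Berge cycles of $k$ consecutive lengths in $\CH$ with shortest length at most $2h$. When $|\CG_1|$ dominates, run the random-sampling step of Theorem \ref{Thm:main3gr} (each vertex kept independently with probability $2/3$) to produce a $2$-graph $G_1$ on some $S \subseteq V(\CH)$ with $|G_1| \gtrsim kn^{1+1/h}$ edges; Lemma \ref{r-partite} further gives a bipartite subgraph $G_1'$ of comparable size. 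A standard Moore-bound-style reduction (iteratively deleting low-degree vertices and restricting to a BFS ball) then extracts a bipartite subgraph of average degree $\geq 2k$ and radius $\leq h$, to which Verstra\"ete's Theorem \ref{Jacques-even-cycles} applies and yields cycles of $k/2$ consecutive even lengths whose shortest length is at most $2h$. As in Theorem \ref{Thm:main3gr}, each even cycle of length $\ell$ in $G_1'$ lifts via the high-pair on its first edge to Berge cycles of both lengths $\ell$ and $\ell + 1$ in $\CH$, producing Berge cycles of $k$ consecutive lengths in $\CH$ whose shortest length is at most $2h$.

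For the inductive step $r \geq 4$, suppose the statement holds for $(r-1)$-graphs. Given $\CH$ with $|\CH| \geq C_r k^{r-1} n^{1+1/h}$, apply Lemma \ref{delta-system}. In the minimum-co-degree alternative, Lemma \ref{berge-cycle} again finishes. Otherwise we obtain an extendable $(r-1)$-graph $\CG \subseteq \partial_{r-1}(\CH)$ with $|\CG| \geq |\CH|/k \geq C_{r-1} k^{r-2} n^{1+1/h}$; by induction $\CG$ contains Berge cycles of $k$ consecutive lengths with shortest at most $2h$, and these lift through the extension to Berge cycles in $\CH$ of the same lengths. The main technical obstacle is the Moore-bound reduction in Case 1 of the base case, where from a bipartite graph of density $\gtrsim kn^{1+1/h}$ one must carve out a subgraph of average degree $\Omega(k)$ and radius at most $h$; this is standard in the derivation of $ex(n,C_{2h})$-type bounds from Verstra\"ete's theorem, but its execution is where care with constants is required.
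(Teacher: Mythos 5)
Your plan matches the paper's intended (but omitted) proof: the paper explicitly states that Theorem~\ref{Thm:length-control} follows ``along the same lines as in the proof of Theorem~\ref{Thm:main} (and Theorem~\ref{Thm:main3gr})\dots\ from Theorem~\ref{cycles-in-linear-dense},'' and you have correctly identified all the moving parts --- the $\delta_{r-1}$/$\partial_{r-1}$ dichotomy of Lemma~\ref{delta-system} with Lemma~\ref{berge-cycle} handling the high-codegree branch, the $\CG_1/\CG_2$ split of Theorem~\ref{Thm:main3gr}, Theorem~\ref{cycles-in-linear-dense} replacing Theorem~\ref{theorem-twice-radius} on the linear piece, and the radius-controlled Theorem~\ref{Jacques-even-cycles} replacing the plain even-cycle conclusion on the high-pair piece, with the $\ell\mapsto\{\ell,\ell+1\}$ lift preserving the length bound.

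For the one step you flag as needing care (extracting a radius-$\le h$ bipartite subgraph of average degree $\ge 4k$ from a bipartite $H$ with $\delta(H)\ge 4kn^{1/h}$): set $n_i=|B_i(v)|$ for the BFS balls around any vertex $v$. Every vertex of $B_{i-1}(v)$ keeps its full degree inside $B_i(v)$, so $B_i(v)$ has average degree at least $\delta(H)\,n_{i-1}/n_i$. If $n_i/n_{i-1}>n^{1/h}$ for every $i\in[h]$ then $n_h>n$, a contradiction; so some $i\le h$ has $n_i/n_{i-1}\le n^{1/h}$, and that $B_i(v)$ is bipartite, has radius at most $h$ (from $v$), and average degree at least $4k$, exactly as needed before invoking Theorem~\ref{Jacques-even-cycles}. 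This is the same ``simple induction'' the paper alludes to when deriving $ex(n,C_{2k})\le 8kn^{1+1/k}$ from Theorem~\ref{Jacques-even-cycles}, so your sketch is complete in outline and correct.
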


When choosing $k=2h$, this may be viewed as an unification for the results on $ex_r(n,\CB C_{2h})$ and $ex_r(n,\CB C_{2h+1})$.


\subsection{Tur\'an numbers of Berge cycles in $r$-graphs}
In this subsection we investigate the upper bounds of Tur\'an numbers $ex_r(n,\CB C_\ell)$ for $r\ge 3$.
We start by mentioning the Tur\'an numbers of even cycles in the graph case.
A classic theorem of Bondy and Simonovits \cite{BS74} shows that $ex(n,C_{2k})\le 100k\cdot n^{1+1/k}$,
and this bound was improved by several authors in \cite{V00,Pik,BJ}.
The current best known upper bound is the following one obtained by Bukh and Jiang \cite{BJ}
\begin{equation}\label{equ:exC2k}
ex(n,C_{2k})\le 80\sqrt{k\log k}\cdot n^{1+1/k}+O(n).
\end{equation}

For 3-graphs, Gy\H{o}ri and Lemons proved that $ex_3(n,\CB C_{2k+1})\le O(k^4)\cdot n^{1+1/k}$ in \cite{GL-3uniform}
and that $ex_3(n, \CB C_{2k})\le O(k^2)\cdot ex(n,C_{2k})$ in \cite{GL}.
F\"uredi and \"Ozkahya \cite{FO} improved this by showing that
$$ex_3(n,\CB C_{2k+p})\le O(k)\cdot ex(n,C_{2k})+ 12p\cdot ex_3^{lin}(n,\CB C_{2k+1}) \text{~~~for every ~} p\in \{0,1\},$$
where $ex_r^{lin}(n,\mathcal{F})$ denotes the maximum number of hypedges in an $n$-vertex $\mathcal{F}$-free linear $r$-graph
and it is also proved in \cite{FO} that $ex_3^{lin}(n,\CB C_{2k+1})\le 2k\cdot n^{1+1/k}+9k\cdot n$.
(See \cite{AS} for related problems.)
In view of \eqref{equ:exC2k}, one can obtain
\begin{equation}\label{equ:ex3C2k+1}
ex_3(n,\CB C_{2k+p})\le O(k\sqrt{k\log k})\cdot n^{1+1/k} \text{~~~for every ~} p\in \{0,1\}.
\end{equation}

In \cite{GL}, Gy\H{o}ri and Lemons also showed for general $r$-graphs, where $r\ge 4$, that
\begin{equation}\label{equ:exrC2k+1}
ex_r(n,\CB C_{2k+1})\le O(k^{r-2})\cdot ex_3(n,\CB C_{2k+1}),
\end{equation}
\begin{equation}\label{equ:exrC2k}
ex_r(n,\CB C_{2k})\le O(k^{r-1})\cdot ex(n,C_{2k}).
\end{equation}
Using the lemmas in Section 4, one can also derive some Tur\'an type results on Berge cycles,
which improve the above inequalities \eqref{equ:exrC2k+1} and \eqref{equ:exrC2k} by an $\Omega(k)$ factor.

\begin{proposition}\label{prop:ex(Berg)}
For all $r\ge 4$, it holds that
$$ex_r(n, \mathcal{B}C_{2k+1})\le O(k^{r-3})\cdot ex_3(n,\mathcal{B}C_{2k+1})$$
$$ex_r(n, \mathcal{B}C_{2k})\le O(k^{r-2})\cdot ex(n,C_{2k}).$$
Therefore for any $p\in \{0,1\}$, $$ex_r(n,\CB C_{2k+p})\le O(k^{r-2}\sqrt{k\log k})\cdot n^{1+1/k}.$$
\end{proposition}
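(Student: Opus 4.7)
The plan is to prove both inequalities by induction on $r$, peeling off one layer of dimension at each step using Lemma \ref{delta-system} and Lemma \ref{berge-cycle}. The combinatorial engine is the observation that if $\CG\subseteq \partial_{r-1}(\CH)$ is extendable and $\CH$ is Berge $C_\ell$-free, then so is $\CG$ viewed as an $(r-1)$-graph: any Berge $C_\ell$ in $\CG$ with edges $g_1,\dots,g_\ell$ lifts, via the extension $\psi$, to a Berge $C_\ell$ in $\CH$ with edges $\psi(g_1),\dots,\psi(g_\ell)$, which are distinct by injectivity.

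For the odd-cycle inequality, let $\CH$ be an $n$-vertex $r$-graph that is $\CB C_{2k+1}$-free, with $r\ge 4$. Apply Lemma \ref{delta-system} to $\CH$ with parameter $2k-1$. The first alternative produces a subhypergraph $\CH'$ with $\delta_{r-1}(\CH')\ge 2k$; Lemma \ref{berge-cycle} (applied with $k$ there equal to $2k-1$) then yields Berge cycles of all lengths in $\{3,\dots,2k+1\}$, contradicting our assumption. Hence the second alternative must occur, giving an extendable $(r-1)$-graph $\CG\subseteq\partial_{r-1}(\CH)$ with $|\CG|\ge |\CH|/(2k-1)$. By the extendability observation, $\CG$ itself is $\CB C_{2k+1}$-free, so $|\CG|\le ex_{r-1}(n,\CB C_{2k+1})$, yielding the one-step recurrence
$$ex_r(n,\CB C_{2k+1})\le (2k-1)\cdot ex_{r-1}(n,\CB C_{2k+1}).$$
Iterating this from $r$ down to $3$ loses a factor of $(2k-1)^{r-3}=O(k^{r-3})$, giving the first claimed bound.

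The even-cycle inequality is analogous, applying Lemma \ref{delta-system} with parameter $2k-2$ so that the high-codegree alternative forces $\delta_{r-1}\ge 2k-1$ and Lemma \ref{berge-cycle} produces a Berge $C_{2k}$. The same argument works at $r=3$, reducing to an extendable subgraph of $\partial_2(\CH)$ that is $C_{2k}$-free; this yields $ex_3(n,\CB C_{2k})\le (2k-2)\cdot ex(n,C_{2k})$. Iterating from $r$ down to $2$ gives the $O(k^{r-2})$ factor. The final unified bound $ex_r(n,\CB C_{2k+p})\le O(k^{r-2}\sqrt{k\log k})\cdot n^{1+1/k}$ follows by plugging in the Bukh--Jiang bound \eqref{equ:exC2k} in the even case and the bound \eqref{equ:ex3C2k+1} of F\"uredi and \"Ozkahya in the odd case. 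There is no serious obstacle here: Lemmas \ref{delta-system} and \ref{berge-cycle} already do all the work, and the only verification needed is that the extendability mechanism preserves Berge-cycle-freeness, which is immediate from the definitions.
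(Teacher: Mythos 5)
Your argument is correct and is essentially the proof the authors intend: they state only that the proposition ``follows easily from Lemmas \ref{delta-system} and \ref{berge-cycle}'' and omit details, and your write-up supplies exactly those details. The extendability observation (Berge-$C_\ell$-freeness passes from $\CH$ to any extendable $(r-1)$-graph in its shadow), the parameter choices $2k-1$ and $2k-2$ in Lemma \ref{delta-system}, the one-step recurrences, the iteration counts ($r-3$ steps down to $3$ in the odd case, $r-2$ steps down to $2$ in the even case), and the final substitution of \eqref{equ:exC2k} and \eqref{equ:ex3C2k+1} all check out.
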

The proof of Proposition \ref{prop:ex(Berg)} follows easily from
Lemmas \ref{delta-system} and \ref{berge-cycle}. We omit the details.
We mention another related result. In \cite{GL}, the following result was also proved for non-uniform hypergraphs: for any $p\in \{0,1\}$, if $\CH$ is a multi-hypergraphs on $n$ vertices with all of its hyperedges of size at least $4k^2$ and containing no Berge cycle of length $2k+p$, then
\begin{equation}\label{equ:non-uniform}
\sum_{e\in \CE(\CH)} |e|\le (16k^6+8k^2)\cdot n^{1+1/k}+(16k^7+32k^6+16k^5)\cdot n.
\end{equation}


\subsection{Asymmetric Tur\'an numbers of even cycles in graphs}
Let the {\it Zarankiewicz number} $z(m,n,C_{2k})$ of the even cycle $C_{2k}$ to be the maximum number
of edges in a $C_{2k}$-free bipartite graph with two parts of sizes $m$ and $n$.
An upper bound was proved by Naor and Verstra\"ete \cite{NV} that for $m\le n$ and $k\ge 2$,
\begin{equation}\label{equ:z(C2k)-0}
z(m,n,C_{2k})\le \left\{\begin{array}{ll}
    (2k-3)\cdot [(mn)^{\frac{k+1}{2k}}+m+n] & \text{if } k \text{ is odd}, \\
    (2k-3)\cdot [m^{\frac{k+2}{2k}}n^{\frac{1}{2}}+m+n] & \text{if } k \text{ is even}.
  \end{array}\right.
\end{equation}
In this subsection, we consider a different form of upper bounds about $z(m,n,C_{2k})$.

Erd\H{o}s, S\'ark\"ozy and S\'os \cite{ESS} conjectured that $z(m,n,C_6)<2n+c(nm)^{2/3}$ for some constant $c>0$.
A weaker version of this conjecture was obtained by S\'ark\"ozy in \cite{Sa}.
Gy\H{o}ri \cite{G97} proved a general result: there exists some $c_k>0$ such that for $n\ge m^2$,
\begin{equation}\label{equ:z(C2k)-1}
z(m,n,C_{2k})\le (k-1)n+c_k\cdot m^2.
\end{equation}
The first term $(k-1)n$ is sharp (at least in a sense) by considering the complete bipartite graph $K_{k-1,n}$;
and when $n=\Omega(m^2)$ this function becomes linear in $n$.
Some related results also can be found in \cite{BGMV,G06}.

The following upper bound of $z(m,n,C_{2k})$ can be derived from the Tur\'an numbers of Berge cycles in hypergraphs,
which is stronger than \eqref{equ:z(C2k)-1}.

\begin{proposition}\label{prop:z(C2k)}
There exists a constant $d_k>0$ such that for any positive integers $n,m$,
$$z(m,n,C_{2k})\le (k-1)n+d_k\cdot m^{1+1/{\lfloor k/2\rfloor}}.$$
\end{proposition}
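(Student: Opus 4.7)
Let $k'=\lfloor k/2\rfloor$ and let $G$ be a $C_{2k}$-free bipartite graph with parts $A$ ($|A|=m$) and $B$ ($|B|=n$). The plan is to isolate the contribution of high-degree $B$-vertices via an application of the Berge-cycle Tur\'an bound (Proposition~\ref{prop:ex(Berg)}) to a $k$-uniform hypergraph built from $G$. Partition $B=B_0\sqcup B_1$ where $B_0=\{b\in B:d_G(b)\le k-1\}$. Edges incident to $B_0$ contribute at most $(k-1)n$ to $e(G)$, so the task reduces to proving $\sum_{b\in B_1}d_G(b)\le d_k\cdot m^{1+1/k'}$.

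Form the simple $k$-uniform hypergraph $\mathcal{H}$ on vertex set $A$ whose hyperedges are the $k$-subsets $T\subseteq A$ satisfying $T\subseteq N_G(b)$ for some $b\in B_1$, and for each $T\in\mathcal{H}$ write $\mu(T)=|\{b\in B_1:T\subseteq N_G(b)\}|$. A first easy observation is the multiplicity bound $\mu(T)\le k-1$: if $\mu(T)\ge k$, then $T$ together with any $k$ of the $b$'s covering it induce $K_{k,k}\subseteq G$, which contains $C_{2k}$, contradicting the hypothesis.

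The main step is to argue that $\mathcal{H}$ is Berge-$C_k$-free. Given any Berge cycle of length $k$ in $\mathcal{H}$ with distinct hyperedges $T_1,\dots,T_k$ and spine $v_1v_2\cdots v_kv_1$, I would apply Hall's marriage theorem in the auxiliary bipartite graph whose parts are $\{T_1,\dots,T_k\}$ and $B_1$ (with $T_i\sim b$ iff $T_i\subseteq N_G(b)$) to select distinct $b_i\in B_1$ with $T_i\subseteq N_G(b_i)$; the cycle $v_1b_1v_2b_2\cdots v_kb_kv_1$ is then a $C_{2k}$ in $G$, the desired contradiction. Granting Berge-$C_k$-freeness, Proposition~\ref{prop:ex(Berg)} applied with $r=k$ (note $k=2k'+p$ with $p\in\{0,1\}$) yields $|\mathcal{H}|\le C_k\cdot m^{1+1/k'}$ for a constant $C_k$ depending only on $k$. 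Combining with the multiplicity bound,
$$\sum_{b\in B_1}\binom{d_G(b)}{k}=\sum_{T\in\mathcal{H}}\mu(T)\le (k-1)C_k\cdot m^{1+1/k'}.$$
Since $d_G(b)\ge k$ for every $b\in B_1$, one has $d_G(b)\le k\binom{d_G(b)}{k}$, and summing gives $\sum_{b\in B_1}d_G(b)\le k(k-1)C_k\cdot m^{1+1/k'}=:d_k\cdot m^{1+1/k'}$.

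The main obstacle is to rigorously justify Hall's condition in the Berge-$C_k$-free step: a hypothetical violator, namely a set of $s$ hyperedges among $T_1,\dots,T_k$ jointly covered by fewer than $s$ vertices in $B_1$, must itself produce a $C_{2k}$ in $G$ via a structural argument. The multiplicity bound $\mu(T)\le k-1$ is one key ingredient, but extra care is needed when small subsets of $B_1$ cover many hyperedges of the spine. As a fallback, one may work with a modified hypergraph in which each $b\in B_1$ labels at most one $k$-subset (chosen arbitrarily from $\binom{N_G(b)}{k}$); this makes Hall's condition trivial but only bounds $|B_1|$ rather than the full degree sum, and the full bound on $\sum_{b\in B_1}d_G(b)$ must then be recovered by combining with the multiplicity bound.
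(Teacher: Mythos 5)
Your plan founders at the step claiming that the $k$-uniform hypergraph $\CH$ (all $k$-subsets of the neighborhoods of vertices in $B_1$) is Berge-$C_k$-free: this is simply false. A \emph{single} vertex $b\in B_1$ of degree at least $2k$ already forces a Berge $k$-cycle inside $\binom{N_G(b)}{k}$. For example, writing $N_G(b)\supseteq\{u_1,\dots,u_{2k-1}\}$, take $T_i=\{u_i,\dots,u_{i+k-1}\}$ for $1\le i\le k-1$ and $T_k=\{u_1,u_k,u_{k+1},\dots,u_{2k-2}\}$ with spine $u_1u_2\cdots u_ku_1$; these are $k$ distinct $k$-subsets of $N_G(b)$, hence $k$ distinct hyperedges of $\CH$, forming a Berge $C_k$, yet no $C_{2k}$ in $G$ need exist. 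This is exactly where Hall's condition breaks: the full spine $\{T_1,\dots,T_k\}$ can have $N(\{T_1,\dots,T_k\})=\{b\}$. The multiplicity bound $\mu(T)\le k-1$ is of no help because it controls how many $b$'s cover one $T$, not how many $T$'s one $b$ covers. Your fallback (one $T$ per $b$) does restore Berge-$C_k$-freeness, but then $|\CH|$ only bounds $|B_1|$, and $\sum_{b\in B_1}d_G(b)\le |B_1|\cdot m$ is nowhere near $d_k m^{1+1/\lfloor k/2\rfloor}$ once some $b$ has very large degree, so the degree sum cannot be recovered from it.

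The paper avoids this by never collapsing to uniformity $k$. It keeps the neighborhoods $N_G(b)$ themselves as hyperedges (sizes varying with $d_G(b)$), so that each hyperedge of a Berge $C_k$ corresponds to a distinct $b\in B$ and thus genuinely yields a $C_{2k}$ in $G$. High-degree vertices ($d_G(b)\ge 4k^2$) are handled by the \emph{non-uniform} Győri--Lemons bound \eqref{equ:non-uniform}, which directly bounds $\sum_e|e|$, i.e.\ the degree contribution. Moderate degrees $k\le i<4k^2$ are grouped by exact degree $i$ into $i$-uniform (multi-)hypergraphs $\CH_i$; the $K_{k,k}$ argument gives multiplicity at most $k-1$, a simple subhypergraph $\CH_i'$ with $|\CH_i'|\ge|\CH_i|/k$ is extracted, and Proposition~\ref{prop:ex(Berg)} is applied in each fixed uniformity. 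If you want to repair your argument, the essential missing ingredient is a bound on $\sum_{b\in B_1}d_G(b)$ in terms of the number of Berge-$C_k$-free non-uniform hyperedges, which is exactly what \eqref{equ:non-uniform} supplies.
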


\begin{proof}
Let $G$ be any bipartite $C_{2k}$-free graph with two parts $A$ and $B$, where $|A|=m$ and $|B|=n$.
Define $\CH_0$ and $\CH_i$ for every $k\le i< 4k^2$ to be multi-hypergraphs with the vertex-set $A$ such that
$\CE(\CH_0)=\{N_G(u): d_G(u)\ge 4k^2 \text{ for } u\in B\}$ and $\CE(\CH_i)=\{N_G(u): d_G(u)=i \text{ for } u\in B\}$.
Then $$e(G)=\sum_{u\in B} d_G(u)\le (k-1)n+\sum_{e\in \CE(\CH_0)}|e|+ \sum_{k\le i< 4k^2}\sum_{e\in \CE(\CH_i)}|e|.$$
Observe that $\CH_0$ and all $\CH_i$ do not contain Berge cycle of length $k$, as otherwise it will give a $C_{2k}$ in $G$.
By the equation \eqref{equ:non-uniform}, one can get that $$\sum_{e\in \CE(\CH_0)}|e|\le O(k^6)\cdot m^{1+1/{\lfloor k/2\rfloor}}.$$
Consider the multi-hypergraph $\CH_i$, where $k\le i< 4k^2$.
It is easy to see that there are at most $k-1$ hyperedges which are identical (otherwise one can form a Berge cycle of length $k$ easily).
Thus, there exists a simple $\CB C_k$-free $i$-graph $\CH_i'\subseteq \CH_i$ such that $|\CH_i'|\ge |\CH_i|/k$.
By Proposition \ref{prop:ex(Berg)} (or Theorem \ref{Thm:length-control}), $|\CH_i|\le k\cdot |\CH_i'|\le O(k^i)\cdot m^{1+1/{\lfloor k/2\rfloor}}$.
Combining the above inequalities, one can obtain that
$$e(G)\le (k-1)n+O(k^6)\cdot m^{1+1/{\lfloor k/2\rfloor}}+\sum_{k\le i< 4k^2} i\cdot |\CH_i|\le (k-1)n+d_k\cdot m^{1+1/{\lfloor k/2\rfloor}},$$
where $d_k=O(k^{4k^2+1})$. This finishes the proof.
\end{proof}

One may compare this proposition with \eqref{equ:z(C2k)-0}.
In the range $m^{1+1/{\lfloor k/2\rfloor}}/n\to 0$, interestingly both upper bounds becomes linear in $n$.
To be precise, \eqref{equ:z(C2k)-0} gives that $z(m,n,C_{2k})\le (2k-3+o(1))\cdot n$,
and this proposition yields $z(m,n,C_{2k})\le (k-1+o(1))\cdot n$, which is nearly tight.


\section{Future work}

\subsection{Finding tight conditions for Berge cycles of consecutive lengths}

It will be interesting to completely solve the problem on Berge cycles  of consecutive lengths.

\begin{problem}
For $r\ge 3$, find the minimum $f_r(k)$ such that any $r$-graph with average-degree $f_r(k)$ (or minimum-degree) contains Berge cycles of $k$ consecutive lengths.
\end{problem}
\noindent The complete $r$-graph on $k+1$ vertices shows that the minimum average-degree $f_r(k)>\binom{k}{r-1}$.
One also can ask for the tight degree condition for the existence of Berge cycles
of $k$ consecutive lengths in linear $r$-graphs.

On a related note, let us note that
Theorem \ref{Thm:main} can be rephrased as the following: every $r$-graph $\CH$ with minimum 1-degree $\delta(\CH)=\Omega(k^{r-1})$
contains Berge cycle of $k$ consecutive lengths. From this, one can promptly obtain an analog for minimum $i$-degree.

\begin{corollary}\label{cor:min-i-degree}
Any $r$-graph $\CH$ with minimum $i$-degree $\delta_i(\CH)=\Omega(k^{r-i})$
contains Berge cycle of $k$ consecutive lengths.
\end{corollary}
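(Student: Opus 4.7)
My plan is to reduce the minimum $i$-degree condition on $\CH$ to the average degree condition of Theorem \ref{Thm:main} on a smaller-uniformity link hypergraph, and then lift the resulting Berge cycles back up to $\CH$. I would split according to the value of $i$.

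When $i=r-1$, the hypothesis reads $\delta_{r-1}(\CH)=\Omega(k)$, so for a sufficiently large implicit constant we have $\delta_{r-1}(\CH)\ge k+1$, and Lemma \ref{berge-cycle} immediately delivers Berge cycles of all lengths in $\{3,4,\dots,k+2\}$, which are $k$ consecutive lengths. So the interesting range is $1\le i\le r-2$.

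For $1\le i\le r-2$, fix any edge $e_{0}\in \CE(\CH)$ and any $(i-1)$-subset $T\subseteq e_{0}$ (take $T=\emptyset$ if $i=1$). Define the link
\[
\CL_{T}=\{e\setminus T:\ e\in\CE(\CH),\ T\subseteq e\},
\]
viewed as an $(r-i+1)$-uniform hypergraph on $\CV(\CH)\setminus T$. Every vertex $v$ of $\CL_{T}$ satisfies $d_{\CL_{T}}(v)=d_{\CH}(T\cup\{v\})\ge\delta_{i}(\CH)=\Omega(k^{r-i})=\Omega(k^{(r-i+1)-1})$, so $\CL_{T}$ has (minimum, hence) average degree large enough for Theorem \ref{Thm:main}. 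Since $r-i+1\ge 3$, Theorem \ref{Thm:main} applied to $\CL_{T}$ produces in $\CL_{T}$ Berge cycles of $k$ consecutive lengths.

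Finally I would lift: given a Berge cycle in $\CL_{T}$ of length $m$ with distinct vertices $v_{1},\dots,v_{m}$ and distinct edges $f_{1},\dots,f_{m}$ satisfying $v_{j},v_{j+1}\in f_{j}$ (indices mod $m$), the hyperedges $e_{j}:=f_{j}\cup T\in\CE(\CH)$ are pairwise distinct (since the $f_{j}$ are disjoint from $T$ and distinct), the vertices $v_{j}$ lie outside $T$ and remain distinct, and $v_{j},v_{j+1}\in f_{j}\subseteq e_{j}$. Thus $e_{1},\dots,e_{m}$ with spine $v_{1}v_{2}\cdots v_{m}$ form a Berge cycle of length $m$ in $\CH$, preserving the $k$ consecutive lengths. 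I do not anticipate any serious obstacle: the argument is a routine link-reduction. The only caveat is that the link construction breaks down for $i=r-1$, because the resulting $2$-graph has minimum degree only $\Omega(k)$ and such a graph need not contain $k$ consecutive cycle lengths (e.g.\ bipartite examples); this is precisely why that boundary case is handled separately by Lemma \ref{berge-cycle}.
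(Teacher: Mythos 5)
Your proof is correct, and it takes a genuinely different route from the paper's. The paper's suggested argument is a degree-boosting iteration: one shows that $\delta_i(\CH)=\Omega(k^{r-i})$ forces $\delta_{i-1}(\CH)=\Omega(k^{r-i+1})$ and iterates down to $\delta_1(\CH)=\Omega(k^{r-1})$, at which point Theorem \ref{Thm:main} applies directly to $\CH$ as an $r$-graph; the uniformity never changes, and no case distinction on $i$ is needed. The boosting step itself, though left to the reader, is not entirely trivial: for an $(i-1)$-set $S'$ with positive co-degree one must show that $S'$ has $\Omega(k)$ extensions $v$ with $d_{\CH}(S'\cup\{v\})>0$, which one gets by noting that the link of any fixed $i$-set $S'\cup\{v_0\}$ is an $(r-i)$-graph with $\Omega(k^{r-i})$ edges and hence $\Omega(k)$ vertices, and then double-counting $(r-i+1)\,d_{\CH}(S')=\sum_v d_{\CH}(S'\cup\{v\})$. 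Your approach instead drops the uniformity in one shot: the link $\CL_T$ of a single $(i-1)$-set $T$ is an $(r-i+1)$-graph whose minimum degree is $\ge\delta_i(\CH)$ by a direct identification $d_{\CL_T}(v)=d_{\CH}(T\cup\{v\})$, so Theorem \ref{Thm:main} applies to $\CL_T$ and the cycles lift to $\CH$ by restoring $T$ (the map $e\mapsto e\setminus T$ is a bijection on edges containing $T$, so edges and the spine remain distinct). This is more transparent and avoids any counting lemma, at the cost of needing $r-i+1\ge 3$; you correctly identify $i=r-1$ as the boundary case and dispatch it with Lemma \ref{berge-cycle}, whereas the paper's iteration handles all $i$ uniformly.
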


\noindent For the proof, it suffices to show that if $\delta_i(\CH)=\Omega(k^{r-i})$ then $\delta_{i-1}(\CH)=\Omega(k^{r-i+1})$.

\subsection{Linear cycles of consecutive lengths and the linear Tur\'an problem}

It is natural to consider the analogous problem for linear cycles, rather than Berge cycles in $r$-uniform hypergraphs.
When the host graph is not required to be linear, the problem is essentially solved for large $n$, due to the
solution to the corresponding Tur\'an problem by F\"uredi and Jiang \cite{FJ} (for $r\geq 5$)
and Kostochka, Mubayi and Verstra\"ete \cite{KMV} (for $r\geq 3$). However, the corresponding problem
for linear cycles of consecutive lengths in linear $r$-uniform hypergraphs is still relatively open.
Collier-Cartaino, Graber and Jiang \cite{CGJ} considered the corresponding linear Tur\'an problem.
Let $ex^{lin}_r(n, C_\ell)$ denote the maximum number of hyperedges in an $n$-vertex linear
$r$-uniform hypergraph that does not contain a linear cycle of length $\ell$.
Extending Bondy-Simonovits \cite{BS74}, they showed that
$ex^{lin}_r(n, C_{2k+p})\leq c_k\cdot n^{1+1/k}$ for some constant $c_k>0$, where $p\in \{0,1\}$.

Using the method developed in this paper together with  ideas from \cite{CGJ},
we can obtain an analogous version of Theorem \ref{Thm:Li3Berge} for linear cycles of consecutive lengths
(which would be a strengthening of Theorem \ref{Thm:Li3Berge} except for the coefficient).
Namely, we can show that every linear $r$-graph with average degree $\Omega(k)$ contains
linear cycles of $k$ consecutive lengths. This can then be used to reduce the coefficient $c_k$ in
the  bound on $ex^{lin}_r(n, C_{2k+p})$. However, due to the additional technicality of that argument, we will leave it
for a forthcoming paper together with other results that we may obtain using our method.

\subsection{Rainbow Tur\'an problem for even cycles}

A problem that is closely related to the linear Tur\'an problem of linear cycles is rainbow Tur\'an problem
for even cycles. For a fixed graph $H$, define the {\it rainbow Tur\'an number} $ex^*(n,H)$ to be the
maximum number of edges in an $n$-vertex graph that has a proper edge-coloring with no rainbow $H$.
Keevash, Mubayi, Sudakov and Verstra\"ete \cite{KMSV} made the following conjecture

\begin{conjecture} {\bf(\cite{KMSV})}
For all $k\geq 2$, $ex^*(n,C_{2k})=O(n^{1+1/k})$.
\end{conjecture}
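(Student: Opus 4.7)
The plan is to reduce the rainbow Tur\'an problem to a linear-cycle Tur\'an problem in linear $3$-uniform hypergraphs, to which the method of this paper should apply. Given a properly edge-colored graph $G$ on $n$ vertices with $m$ edges and color palette $C$, I first build the auxiliary $3$-uniform hypergraph $\CH$ with $\CV(\CH)=V(G)\cup C$ and $\CE(\CH)=\{\{u,v,c\}: uv\in E(G)\text{ has color }c\}$. Because the coloring is proper, any two distinct hyperedges of $\CH$ share at most one vertex: edges of $G$ sharing an endpoint have different colors, while edges of $G$ sharing a color form a matching and hence share no endpoint. Thus $\CH$ is linear.

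The second step is to translate rainbow cycles into hypergraph cycles. A rainbow $C_{2k}$ in $G$ corresponds exactly to a Berge cycle of length $2k$ in $\CH$ whose spine lies in $V(G)$ and whose $2k$ hyperedges use $2k$ distinct color-vertices. I then apply the machinery of Section 3: build a maximal extendable skeleton $T$ in $\CH$ rooted at some $r\in V(G)$, stratify the non-tree hyperedges by their distribution over the levels $L_i(T)$, and prove analogues of Lemma \ref{twice-radius-A} and Lemma \ref{twice-radius-BC} restricted to the class of cycles whose shared vertices lie in $V(G)$. A suitably color-aware refinement of Lemma \ref{special-paths-hyper}, in which the two "colors" of hyperedges are tracked together with which vertex-type (graph or color) is serving as the linking vertex, would be the key technical tool.

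To reach the exponent $1+1/k$, I follow the BFS-expansion argument of Theorem \ref{cycles-in-linear-dense}: after passing to a subhypergraph of large minimum degree via Lemma \ref{prop:ave->min}, if $G$ contains no rainbow $C_{2k}$ whose shortest length is at most $2k$, then the analogues of the level-counting inequalities would force $|L_{i+1}|\ge |L_i|\cdot n^{1/k}$, so that $|L_k|\ge n$, yielding $m=O(n^{1+1/k})$.

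The hard part is twofold. First, a generic linear or Berge cycle in $\CH$ need not have its spine in $V(G)$: whenever two consecutive hyperedges share a color-vertex, they correspond to two disjoint $G$-edges of the same color rather than to two consecutive edges of a rainbow $C_{2k}$. Avoiding such color-linkings requires a genuinely new ingredient in the skeleton argument, since the BFS process must be set up to prefer edges through $V(G)$ and to discount color-vertices appropriately. Second, one must control $|\CV(\CH)|=n+|C|$, which can exceed $O(n)$ when many small color classes are present; I would handle this by pre-processing, discarding color classes of size one (where rainbow-ness is automatic and the problem reduces to the classical Bondy--Simonovits bound) and bucketing the remaining color classes by size so that at most $O(n)$ surviving color-vertices need to be analyzed. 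The first difficulty -- the color-linking issue -- is, in my view, the chief obstacle to a direct application of the present method and is likely the reason this conjecture remains open.
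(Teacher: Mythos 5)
You should note first that this statement is not a theorem of the paper at all: it is the Keevash--Mubayi--Sudakov--Verstra\"ete rainbow Tur\'an conjecture, which the authors record in Section 6.3 as verified only for $k=2,3$ and otherwise \emph{open}, remarking merely that ``it will be interesting to see if the method developed here can be used to make some progress on the problem.'' There is therefore no proof in the paper to compare against, and you are not in error for failing to close the argument --- nor would you have been expected to.

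That said, your analysis is sound and tracks the authors' implicit view quite closely. The encoding of a properly edge-colored graph $G$ as the linear $3$-graph $\CH$ on $V(G)\cup C$ with a hyperedge $\{u,v,c\}$ for each colored edge is the standard translation, and you verify linearity correctly (properness handles pairs sharing an endpoint; the matching property of color classes handles pairs sharing a color). You also put your finger on the genuine obstacle: the lemmas of Section 3 (Lemmas \ref{twice-radius-A}, \ref{twice-radius-BC}, \ref{special-paths-hyper}) produce \emph{Berge} cycles in $\CH$ with no control over whether the spine lies entirely in $V(G)$, and even when it does, linearity permits two non-consecutive hyperedges of the Berge cycle to share a color vertex, so the cycle read back in $G$ need not be rainbow. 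Forcing the skeleton and the level-counting to prefer $V(G)$-linkings and to avoid repeated colors is exactly the new ingredient that is missing, and you are right that this is why the conjecture is open rather than a corollary of the present machinery. Your secondary worry about $|\CV(\CH)|=n+|C|$ is comparatively minor and your proposed bucketing/pre-processing would handle it; the color-linking issue is the substantive gap, and naming it as such is the correct conclusion here.
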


The conjecture was verified for $k=2,3$ in \cite{KMSV}, but is otherwise still open.
It will be interesting to see if the method developed here can be used to make some progress on the problem.

We direct readers to the recent survey \cite{V16} by Verstra\"ete for various other extremal problems on cycles.



\begin{thebibliography}{10}
\bibitem{AS} N. Alon and C. Shikhelman,
Many $T$ copies in $H$-free graphs,
\emph{J. Combin. Theory Ser. B}, to appear.
DOI:10.1016/j.jctb.2016.03.004

\bibitem{BGMV} C. Balbuena, P. Garc\'ia-V\'azquez, X. Marcote and J. C. Valenzuela,
Counterexample to a conjecture of Gy\H{o}ri on $C_{2l}$-free bipartite graphs,
\emph{Discrete Math.} 307 (2007), 748--749.





\bibitem{BG} B. Bollob\'as, E. Gy\"ori, Pentagons vs. triangles, \emph{Discrete Math.} 308 (2008), 4332--4336.

\bibitem{BS74} J. Bondy and M. Simonovits,
Cycles of even length in graphs,
\emph{J. Combin. Theory Ser. B} 16 (1974), 97--105


\bibitem{BV98}
J. A. Bondy and A. Vince,
\newblock{Cycles in a graph whose lengths differ by one or two},
\newblock{\emph{J. Graph Theory}} \textbf{27} (1998), 11--15.


\bibitem{BJ} B. Bukh and Z. Jiang,
A bound on the number of edges in graphs without an even cycle,
\emph{Combin. Probab. Comput.}, to appear, (see also
arXiv:1403.1601v2).

\bibitem{CGJ} C. Collier-Cartaino, N. Graber and T. Jiang,
Linear Tur\'an numbers of $r$-uniform linear cycles and related Ramsey numbers,
\emph{Combin. Probab. Comput.}, to appear, (see also arXiv:1404.5015).



\bibitem{DEF} M. Deza, P. Erd\H{o}s, P. Frankl, Intersection properties of systems of finite sets,
\emph{Proc. London. Math. Soc. (3)} 36 (1978), 369--384.

\bibitem{Erd76}
P. Erd\H{o}s, \newblock{Some recent problems and results in graph theory, combinatorics, and number theory},
\newblock{\emph{Proc. Seventh S-E Conf. Combinatorics, Graph Theory and Computing, Utilitas Math.}}, Winnipeg, 1976, pp 3--14.

\bibitem{EG}
P. Erd\H{o}s, T. Gallai, On maximal paths and circuits of graphs, \emph{Acta. Math. Acad. Sci. Hung.} 10 (1959), 337--356.

\bibitem{EK}
P. Erd\H{o}s, D. Kleitman, \newblock{On coloring graphs to maximize the proportion of multicolored $k$-edges},
\emph{J. Combin. Theory} 5, (1968), 164--169.

\bibitem{ESS} P. Erd\H{o}s, A. S\'ark\"ozy and V.T. S\'os,
On product representation of powers, I,
\emph{European J. Combin.} 16 (1995), 567--588.

\bibitem{ES} P. Erd\H{o}s, M. Simonovits, A limit theorem in graph theory, \emph{Stuidia Sci. Math. Hungar.} 1 (1966), 51--57.

\bibitem{Es-Stone} P. Erd\H{o}s, A.M. Stone  On the structure of linear graphs, \emph{Bulletin of Amer. Math. Soc.} 52 (12) (1946), 1087--1091.

\bibitem{Fan} G. Fan,
Distribution of Cycle Lengths in Graphs,
\emph{J. Combin. Theory Ser. B} 84 (2002), 187--202.

\bibitem{FS} R. Faudree, M. Simonovits, On a class of degenerate extremal graph
problems, \emph{Combinatorica} 3 (1983), 83-93.


\bibitem{FJ} Z. F\"uredi and T. Jiang.
Hypergraph Tur\'an numbers of linear cycles,
\emph{J. Combin. Theory Ser. A.} 123 (2014), 252--270.

\bibitem{FO} Z. F\"uredi and L. \"Ozkahya,
On 3-uniform hypergraphs without a cycle of a given length,
\emph{arXiv:1412.8083v2}



\bibitem{G97} E. Gy\H{o}ri,
$C_6$-free bipartite graphs and product representation of squares,
\emph{Discrete Math.} 165/166 (1997), 371--375.



\bibitem{G06} E. Gy\H{o}ri,
Triangle-free hypergraphs,
\emph{Combin. Probab. Comput.} 15 (2006), 185--191.


\bibitem{GL-3uniform} E. Gy\H{o}ri and N. Lemons,
3-uniform hypergraphs avoiding a given odd cycle,
\emph{Combinatorica} 32 (2012), 187--203


\bibitem{GL} E. Gy\H{o}ri and N. Lemons,
Hypergraphs with no cycle of a given length,
\emph{Combin. Probab. Comput.} 21 (2012), 193--201.



\bibitem{HS98}
R. H\"aggkvist and A. Scott,
\newblock{Arithmetic progressions of cycles},
\newblock{\emph{Technical Report}} No. 16 (1998), Matematiska Institutionen, Ume\r{a}Universitet.

\bibitem{KMSV} P. Keevash, D. Mubayi, B. Sudakov, J. Verstra\"ete,
Rainbow Tur\'an problems, \emph{ Combin. Probab. Comput.} 16 (2007), 109--126.



\bibitem{KMV} A. Kostochka, D. Mubayi, J. Verstra\"ete,
Tur\'an problems and shadows I: paths and cycles,
\emph{J. Combin. Th. Ser. A}, to appear.

\bibitem{KSV}
A. Kostochka, B. Sudakov and J. Verstra\"ete,
Cycles in triangle-free graphs of large chromatic number,
\emph{Combinatorica}, to appear. DOI: 10.1007/s00493-015-3262-0


\bibitem{LM} C.-H. Liu and J. Ma,
Cycle lengths and minimum degree of graphs,
\emph{arXiv:1508.07912}


\bibitem{Ma} J. Ma,
Cycles with consecutive odd lengths,
\emph{European J. Combin.} 52(A) (2016), 74--78.

\bibitem{NV} A. Naor and J. Verstra\"ete,
A note on bipartite graphs without $2k$-cycles,
\emph{Combin. Probab. Comput.} 14 (2005), 845--849.


\bibitem{Pik} O. Pikhurko,
A note on the Tur¨¢n function of even cycles,
\emph{Proc. Amer. Math. Soc.} 140 (2012), 3687--3692.


\bibitem{Sa} G. S\'ark\"ozy,
Cycles in bipartite graphs and an application in number theory,
\emph{J. Graph Theory} 19 (1995), 323--331.


\bibitem{SV08}
B. Sudakov and J. Verstra\"ete,
Cycle lengths in sparse graphs,
\emph{Combinatorica} 28 (2008), 357--372.


\bibitem{SV16}
B. Sudakov and J. Verstra\"ete,
The extremal function for cycles of length $l$ mod $k$,
\emph{arXiv:1606.08532}



\bibitem{Th83}
C. Thomassen, \newblock{Graph decomposition with applications to subdivisions and
path systems modulo k}, \newblock{\emph{J. Graph Theory}} \textbf{7} (1983), 261--271.

\bibitem{turan}
P. Tur\'an, Eine Extremalaufgabe aus der Graphentheorie,  \emph{Mat. Fiz. Lapook}
48 (1941), 436 -- 452.




\bibitem{V00} J. Verstra\"ete,
On arithmetic progressions of cycle lengths in graphs,
\emph{Combin. Probab. Comput.} 9 (2000), 369--373.


\bibitem{V16} J. Verstra\"ete,
Extremal problems for cycles in graphs,
In \emph{Recent Trends in Combinatorics},
A. Beveridge et al. (eds.), The IMA Volumes in Mathematics and its Applications 159, 83--116, Springer, New York, 2016.




\end{thebibliography}
\end{document}